\def\bsx{{\boldsymbol{x}}}
\def\bsy{{\boldsymbol{y}}}
\def\calD{{\mathcal{D}}}
\def\calP{{\mathcal{P}}}
\def\calS{{\mathcal{S}}}
\def\calX{{\mathcal{X}}}
\def\Real{{\mathbb{R}}}
\definecolor{gray}{RGB}{128,128,128}
\newtheorem{assumption}{Assumption}
\newtheorem{theorem}{Theorem}
\newtheorem{lemma}{Lemma}
\newtheorem{definition}{Definition}
\newtheorem{remark}{Remark}
\newtheorem{corollary}{Corollary}
\DeclareMathOperator{\Dom}{Dom}
\DeclareMathOperator*{\argmin}{arg\,min}
\DeclareMathOperator{\Reg}{Reg}
\DeclareMathOperator{\col}{col}
\DeclareMathOperator{\inout}{in}
\DeclareMathOperator{\outin}{out}
\newcolumntype{M}[1]{>{\centering\arraybackslash}m{#1}}
\newcolumntype{N}{@{}m{0pt}@{}}
\title{\LARGE \bf
Distributed Online Convex Optimization\\ with Time-Varying Coupled Inequality Constraints
}
\author{Xinlei Yi, Xiuxian Li, Lihua Xie, and Karl H. Johansson
\thanks{This work was supported by the
Knut and Alice Wallenberg Foundation, the  Swedish Foundation for Strategic Research, and the Swedish Research Council.}
\thanks{X. Yi and K. H. Johansson are with the Division of Decision and Control Systems, School of Electrical Engineering and Computer Science, KTH Royal Institute of Technology, 100 44, Stockholm, Sweden.
        {\tt\small \{xinleiy, kallej\}@kth.se}.}%
\thanks{X. Li and L. Xie are with School of Electrical and Electronic Engineering,
Nanyang Technological University, 50 Nanyang Avenue, Singapore 639798. {\tt\small \{xiuxianli, elhxie\}@ntu.edu.sg}.}
}
\begin{document}

\maketitle
\thispagestyle{empty}
\pagestyle{empty}

\begin{abstract}\label{online_op:Abstract}This paper considers distributed online optimization with time-varying coupled inequality constraints. The global objective function is composed of  local convex cost and regularization functions and the coupled constraint function is the sum of local convex functions. A distributed online primal-dual dynamic mirror descent algorithm is proposed to solve this problem, where the local cost, regularization, and constraint functions are held privately and revealed only after each time slot. Without assuming Slater's condition,  we first derive regret and constraint violation bounds for the algorithm and show how they depend on the stepsize sequences, the accumulated dynamic variation of the comparator sequence, the number of agents, and the network connectivity. As a result,  under some natural decreasing stepsize sequences, we prove that the algorithm achieves sublinear dynamic regret and constraint violation if the accumulated dynamic variation of the optimal sequence also grows sublinearly. We also prove that the algorithm achieves sublinear static regret and constraint violation under mild conditions. Assuming Slater's condition, we show that the algorithm achieves smaller bounds on the constraint violation. In addition, smaller bounds on the static regret are achieved when the objective function is strongly convex. Finally, numerical simulations are provided to illustrate the effectiveness of the theoretical results.

\emph{Index Terms}---Distributed optimization, dynamic mirror descent, online optimization, time-varying constraints
\end{abstract}
\section{Introduction}\label{online_opsec:intro}
Consider a network of $n$ agents indexed by $i=1,\dots,n$. For each $i$, let the local decision set $X_i\subseteq\mathbb{R}^{p_i}$ be a closed convex set with $p_i$ being a positive integer. Let $\{f_{i,t}:X_i\rightarrow \mathbb{R}\}$ and $\{g_{i,t}:X_i\rightarrow \mathbb{R}^m\}$ be arbitrary sequences of local convex cost and constraint functions over time $t=1,2,\dots$, respectively, where $m$ is a positive integer. At each $t$, the network's objective is to solve the convex optimization problem $\min_{x_t\in X}\sum_{i=1}^nf_{i,t}(x_{i,t})$ with coupled constraint $\sum_{i=1}^ng_{i,t}(x_{i,t})\le{\bf0}_{m}$, where the global decision variable is $x_t=\col(x_{1,t},\dots,x_{n,t})\in X=X_1\times\cdots\times X_n\subseteq\mathbb{R}^{p}$ with $p=\sum_{i=1}^np_i$. We are interested in distributed algorithms to solve this problem, where computations are done by each agent. It is common to influence the structure of the solution using regularization. In this case, each agent $i$ introduces a regularization function $r_{i,t}:X_i\rightarrow \mathbb{R}$. Examples of regularization include $\ell_1$-regularization $r_{i,t}(x_i)=\lambda_i\|x_i\|_1$ and $\ell_2$-regularization $r_{i,t}(x_i)=\frac{\lambda_i}{2}\|x_i\|$ with $\lambda_i>0$. The global objective function now becomes $f_t(x_t)=\sum_{i=1}^n(f_{i,t}(x_{i,t})+r_{i,t}(x_{i,t}))$. Denote $g_t(x_t)=\sum_{i=1}^ng_{i,t}(x_{i,t})$.  To summarize, we are interested in solving the constrained optimization problem
\begin{mini}
{x_t\in X}{f_t(x_t)}{\label{online:problem1}}{}
\addConstraint{g_t(x_t)\le}{{\bf 0}_m,\quad}{t=1,\dots}
\end{mini}
using distributed algorithms.
In order to guarantee that problem (\ref{online:problem1}) is feasible, we assume that for any $T\in\mathbb{N}_+$, the set of all feasible sequences $
\mathcal{X}_{T}=\{(x_1,\dots,x_T):~x_t\in X,~g_{t}(x_t)\le{\bf0}_{m},~
t=1,\dots,T\}
$ is non-empty. With this standing assumption, an optimal sequence to (\ref{online:problem1}) always exists.

We consider  online algorithms. For a distributed online algorithm, at time $t$, each agent $i$ selects a decision $x_{i,t}\in X_i$. After the selection, the agent receives its cost function $f_{i,t}$ and regularization $r_{i,t}$ together with its constraint function $g_{i,t}$. At the same moment, the agents exchange data with their neighbors over a time-varying directed graph.
The performance of an algorithm depends on both the amount of data exchanged between the agents and how they process the data.
For online algorithms, regret and constraint violation are often used as performance metrics. The regret is the accumulation over time of the loss difference between the decision determined by the algorithm and a comparator sequence. Specifically, the efficacy of a decision sequence $\bsx_T=(x_{1},\dots,x_{T})$ relative to a comparator sequence $\bsy_T=(y_1,\dots,y_T)\in\mathcal{X}_{T}$ with $y_t=\col(y_{1,t},\dots,y_{n,t})$ is characterized by the regret
\begin{align}\label{online_op:reg}
\Reg(\bsx_T,\bsy_T)=&\sum_{t=1}^Tf_{t}(x_{t})-\sum_{t=1}^Tf_{t}(y_{t}).
\end{align}
There are two special comparators. One is $\bsy_T=
\bsx^*_T=\argmin_{\bsx_T\in\calX_{T}}\sum_{t=1}^Tf_t(x_t)$, i.e., an optimal sequence to (\ref{online:problem1}).
In this case $\Reg(\bsx_T,\bsx^*_T)$ is called the dynamic regret. Another special comparator is the static optimal sequence $\bsy_T=\check{\bsx}^*_T=\argmin_{\bsx_T\in\check{\calX}_{T}}\sum_{t=1}^Tf_t(x_t)
$, where $\check{\calX}_{T}=\{(x,\dots,x):~x\in X,~g_{t}(x)\le{\bf0}_{m},~t=1,\dots,T\}\subseteq\calX_{T}$ is the set of feasible static sequences. In order to guarantee the existence of $\check{\bsx}^*_T$, we assume that $\check{\calX}_{T}$ is non-empty.
In this case $\Reg(\bsx_T,\check{\bsx}^*_T)$ is called the static regret. It is straightforward to see that $\Reg(\bsx_T,\bsy_T)\le\Reg(\bsx_T,\bsx^*_T),~\forall \bsy_T\in\calX_{T}$, and that $\Reg(\bsx_T,\check{\bsx}^*_T)\le\Reg(\bsx_T,\bsx^*_T)$.
For a decision sequence $\bsx_T$, the normally used constraint violation measure is $\|[\sum_{t=1}^Tg_{t}(x_{t})]_+\|$, i.e., the accumulation of constraint violations. This definition implicitly allows constraint violations at some times to be compensated by strictly feasible decisions at other times. This is appropriate for constraints that have a cumulative nature such as energy budgets enforced through average power constraints.

This paper develops a distributed online algorithm to solve (\ref{online:problem1}) with guaranteed performance measured by the regret and  constraint violation. We are satisfied with low regret and  constraint violation, by which we mean that both $\Reg(\bsx_T,\bsy_T)$ and $\|[\sum_{t=1}^Tg_{t}(x_{t})]_+\|$ grow sublinearly with $T$, i.e., there exist $\kappa_1,~\kappa_2\in(0,1)$ such that $\Reg(\bsx_T,\bsy_T)=\mathcal{O}(T^{\kappa_1})$ and $\|[\sum_{t=1}^Tg_{t}(x_{t})]_+\|=\mathcal{O}(T^{\kappa_2})$. This implies that the upper bound of the time averaged difference between the accumulated cost of the decision sequence and the accumulated cost of any comparator sequences tends to zero as $T$ goes to infinity. The same thing holds for the upper bound of the time averaged constraint violation. The novel algorithm we design explores the stepsize sequences in a way that allows the trade-off between how fast these two bounds tend to zero.

\subsection{Motivating Example}\label{online_op:example}
As a motivating example, consider a multi-target tracking problem in which $n$ agents follow $n$ targets. Let $z_i(s),~\tilde{z}_i(s)$ denote the positions of agent $i$ and target $i$ at time $s$, respectively. To model agent and target paths, we introduce a parameterization:
\begin{align*}
&z_i(s)=\sum_{k=1}^{p_i}x_{i,t}[k]c_{k,t}(s),\\
&\tilde{z}_i(s)=\sum_{k=1}^{p_i}y_{i,t}[k]c_{k,t}(s),~s\in[t,t+1),
\end{align*}
where $c_{k,t}(s)$ are vector functions that parameterize the space of possible trajectories over time $[t,t+1)$ and satisfy
\begin{align*}
\int_{t}^{t+1}\langle c_{k,t}(s),c_{l,t}(s)\rangle ds=\begin{cases}
1,~&\text{if}~k=l\\
0,~&\text{else.}
\end{cases}
\end{align*}
The action spaces of agent $i$ and target $i$ are given by  $x_{i,t}=[x_{i,t}[1],\dots,x_{i,t}[p_i]]^\top\in X_i\subseteq\mathbb{R}^{p_i}$ and $y_{i,t}=[y_{i,t}[1],\dots,y_{i,t}[p_i]]^\top\in\mathbb{R}^{p_i}$, respectively. At time $t$, agent $i$ repositions itself by selecting an action $x_{i,t}$ such that it could stay as close as possible to target $i$ during time $[t,t+1)$ and at the same time it wants the selection cost $\langle \pi_{i,t},x_{i,t}\rangle$ to be as small as possible, where $\pi_{i,t}\in\mathbb{R}^{p_i}_+$ is the price vector. This goal can be captured by defining a local cost function
\begin{align*}
f_{i,t}(x_{i,t})&=\zeta_{i,1}\langle \pi_{i,t},x_{i,t}\rangle+\zeta_{i,2}\int_{t}^{t+1}\|z_i(s)-\tilde{z}_i(s)\|^2ds\\
&=\zeta_{i,1}\langle \pi_{i,t},x_{i,t}\rangle+\zeta_{i,2}\|x_{i,t}-y_{i,t}\|^2,
\end{align*}
where $\zeta_{i,1}$ and $\zeta_{i,2}$ are nonnegative constants to trade-off the two subgoals.  Here, target $i$'s action $y_{i,t}$ and the price vector $\pi_{i,t}$ are observed only after the selection.
Agents need to cooperatively take into account energy and communication constraints. For simplicity, we introduce linear local constraint functions $g_{i,t}(x_{i,t})=D_{i,t}x_{i,t}-d_{i,t}$, where $D_{i,t}\in\mathbb{R}^{m\times p_i}$ and $d_{i,t}\in\mathbb{R}^{p_i}$ are time-varying and unknown at time $t$. These coupling
constraints determine the limits on the available resources to be shared among the agents.
Section~\ref{online_opsec:simulation} shows how this multi-target tracking problem can be solved by the algorithm proposed in this paper.

\subsection{Literature Review}
The online optimization problem (\ref{online:problem1}) is related to two bodies of literature: centralized online convex optimization with time-varying inequality constraints ($n=1$) and distributed online convex optimization with time-varying coupled inequality constraints ($n\ge2$). Depending on the characteristics of the constraint, there are two important special cases: optimization with static constraints ($g_{i,t}\equiv0$ for all $t$ and $i$) and time-invariant constraints ($g_{i,t}\equiv g_i$ for all $t$ and $i$). Below, we provide an overview of the related works.

Centralized online convex optimization with static set constraints was first studied by Zinkevich \cite{zinkevich2003online}. Specifically, he developed a projection-based online gradient descent algorithm and achieved $\mathcal{O}(\sqrt{T})$ static regret bound for an arbitrary sequence of convex objective functions with bounded subgradients. It was later shown that this is a tight bound up to constant factors \cite{hazan2007logarithmic}. The regret bound can be reduced under more stringent strong convexity conditions on the objective functions \cite{hazan2007logarithmic,shalev2012online,hazan2016introduction,mokhtari2016online} or by allowing to query the gradient of the objective function multiple times \cite{zhang2017improved}. When the static constrained sets are characterized by inequalities, the conventional projection-based online algorithms are difficult to implement and may be inefficient in practice due to high computational complexity of the projection operation. To overcome these difficulties, some researchers proposed primal-dual algorithms for centralized online convex optimization with time-invariant inequality constraints, e.g., \cite{mahdavi2012trading,jenatton2016adaptive,yu2016low,NIPS2018_7852}.
The authors of \cite{sun2017safety} showed that the algorithms proposed in \cite{mahdavi2012trading,jenatton2016adaptive} are general enough to handle time-varying inequality constraints. The authors of \cite{chen2017online} used the modified saddle-point method to handle time-varying constraints. The papers \cite{yu2017online,neely2017online} used a virtual queue, which essentially is a modified Lagrange multiplier, to handle stochastic and time-varying constraints and the authors of \cite{chen2019bandit} extended the algorithm proposed in \cite{neely2017online} with bandit feedback. The authors of \cite{paternain2017online} studied online convex optimization with time-varying constraints in the continuous-time setting and showed that the static regret in continuous-time can be bounded by a constant independent of the time horizon, as opposed to the sublinear static regret observed in the discrete-time setting.

Distributed online convex optimization has been extensively studied, so here we only list some of the most relevant work. Firstly, the authors of \cite{tsianos2012distributed,yan2013distributed,koppel2015saddle,hosseini2016online,shahrampour2018distributed,yuan2019distributed}
proposed distributed online algorithms to solve convex optimization problems with static set constraints and achieved sublinear regret. For instance, the authors of \cite{shahrampour2018distributed} proposed a decentralized variant of the dynamic mirror descent algorithm proposed in \cite{hall2015online}. Mirror descent generalizes classical gradient descent to Bregman divergences and is suitable for solving high-dimensional convex optimization problems. The weighted majority algorithm in machine learning \cite{littlestone1994weighted} can  be viewed as a special case of mirror descent. Secondly, the paper \cite{yuan2017adaptive} extended the adaptive algorithm proposed in \cite{jenatton2016adaptive} to a distributed setting to solve an online convex optimization problem with a static inequality constraint.  Finally, the authors of \cite{lee2017sublinear,Li2018distributed} proposed distributed primal-dual algorithms to solve an online convex optimization with static coupled inequality constraints. To the best of our knowledge, no papers considered distributed online convex optimization with time-varying constraints in the discrete-time setting. In continuous-time, the authors of \cite{Paternain2019distributed} extended the online saddle point algorithm proposed in \cite{paternain2017online} to a distributed version.

\subsection{Main Contributions}


Compared to the literature the contributions of this paper are summarized as follows.

1) We propose a novel distributed online primal-dual dynamic mirror descent algorithm to solve the constrained optimization problem (\ref{online:problem1}). In this algorithm, each agent $i$ maintains two local sequences: the local decision sequence $\{x_{i,t}\}\subseteq X_i$ and the local dual variable sequence $\{q_{i,t}\}\subseteq\mathbb{R}^m_+$. An agent averages its local dual variable with its in-neighbors in a consensus step, and takes into account the estimated dynamics of the optimal sequences. The proposed algorithm uses different non-increasing stepsize sequences $\{\alpha_t>0\}$ and $\{\gamma_t>0\}$ for the primal and dual updates, respectively, and a non-increasing sequence $\{\beta_t>0\}$ to design penalty terms such that the dual variables are not growing too large. These sequences give some freedom in the regret and constraint violation bounds, as they allow the trade-off between how fast these two bounds tend to zero. The algorithm uses the subgradients of the local cost and constraint functions at the previous decision, but the total number of iterations or any other parameters related to the objective or constraint functions are not used.  

2) Without assuming Slater's condition, i.e., that the feasible region has an interior point, we derive regret and constraint violation bounds for the algorithm and show how they depend on the stepsize sequences, the accumulated dynamic variation of the comparator sequence, the number of agents, and the network connectivity. The same regret bound was achieved by the centralized dynamic mirror descent proposed in \cite{hall2015online} for static set constraints.
With the stepsize sequences $\alpha_t=1/t^{c}$, $\beta_t=1/t^\kappa$, $\gamma_t=1/t^{(1-\kappa)}$, where $c,~\kappa\in(0,1)$ are user-defined trade-off parameters, we prove that our algorithm simultaneously achieves sublinear dynamic regret and constraint violation if the accumulated dynamic variation of the optimal sequence grows sublinearly.  Moreover, if $c=\kappa$ we show that the algorithm achieves the same sublinear static regret and constraint violation bounds as in \cite{jenatton2016adaptive}, i.e., $\Reg(\bsx_T,\check{\bsx}_T^*)=\mathcal{O}(T^{\max\{1-\kappa,\kappa\}})$ and $\|[\sum_{t=1}^Tg_{t}(x_{t})]_+\|=\mathcal{O}(T^{1-\kappa/2})$.  Compared with  \cite{mahdavi2012trading,jenatton2016adaptive,sun2017safety,NIPS2018_7852,Li2018distributed}, which assumed the same assumption on the cost and constraint functions as this paper, the proposed algorithm has the following advantages. The parameter $\kappa$ enables the user to trade-off static regret bound for constraint violation bound, while recovering the $\mathcal{O}(\sqrt{T})$ static regret bound and $\mathcal{O}(T^{3/4})$ constraint violation bound from \cite{mahdavi2012trading,sun2017safety} as special cases. The algorithms proposed in \cite{mahdavi2012trading,jenatton2016adaptive,sun2017safety} are centralized and the constraint functions in \cite{mahdavi2012trading,jenatton2016adaptive} are time-invariant. Moreover, in \cite{mahdavi2012trading,sun2017safety} the total number of iterations and in \cite{mahdavi2012trading,jenatton2016adaptive,sun2017safety} the upper bounds of the objective and constraint functions and their subgradients need to be known in advance to design the stepsizes.
The proposed algorithm achieves smaller static regret and constraint violation bounds than \cite{Li2018distributed}, although time-invariant coupled inequality constraints were considered. The algorithm proposed in \cite{NIPS2018_7852} achieved a better constraint violation bound than ours, but their algorithm is centralized and the  constraint function is time-invariant.

3) Assuming Slater's condition and the stepsize sequences above with $c=1-\kappa$, we show that the dynamic regret bound is similar to the bound without Slater's condition, but the constraint violation bound can be reduced to $\mathcal{O}(T^{\max\{1-\kappa,\kappa\}})$. Our results are superior to \cite{chen2017online} in the sense that the accumulated variation of constraints, $V(\{g_i\}_{t=1}^T)=\sum_{t=1}^T\max_{x\in\calX}\|[g_{t+1}(x)-g_t(x)]_+\|$, appears in their bounds and more assumptions are needed. We show that our algorithm simultaneously achieves sublinear dynamic regret and constraint violation, if the accumulated variation of the optimal sequence grows sublinearly. Moreover, the static regret and constraint violation bounds grow as $\mathcal{O}(\sqrt{T})$, which is better than the results for the centralized algorithm in \cite{neely2017online}. The authors of \cite{lee2017sublinear} achieved the same bounds, but they assumed that the coupled inequality constraints are time-invariant and they explicitly assumed boundedness of the dual variable sequence. The conditions to guarantee this assumption are not so obvious since the dual variable sequence is generated by the algorithm. In this paper, we show that the dual variable sequence is indeed bounded.

4) When the local objective functions are assumed to be strongly convex, we show that, also without Slater's condition, the proposed algorithm achieves $\mathcal{O}(T^{\kappa})$ static regret bound and $\mathcal{O}(T^{1-\kappa/2})$ constraint violation bound. Moreover, we find that the constraint violation bound can be reduced to $\mathcal{O}(T^{\max\{1-\kappa,\kappa\}})$ when Slater's condition holds.

The comparison between this paper and the literature is summarized in Table~\ref{online_op:table}.
\begin{table*}[t]
\caption{Comparison of this paper to some related works on online convex optimization.}
\label{online_op:table}
\vskip 0.15in
\begin{center}
\begin{small}
\begin{tabular}{M{1.3cm}|M{1.72cm}|M{2.65cm}|M{10.35cm}N}
\hline
References&Problem type&Constraint type&Regret and constraint violation bounds&\\[7pt]
\hline
\cite{mahdavi2012trading}&Centralized&$g(x)\le{\bf 0}_m$
& $\Reg(\bsx_T,\check{\bsx}^*_T)\le  \mathcal{O}(\sqrt{T})$, $\|[\sum_{t=1}^Tg(x_{t})]_+\|\le \mathcal{O}(T^{3/4})$ &\\[12pt]
\hline
\cite{jenatton2016adaptive}&Centralized&$g(x)\le{\bf 0}_m$
&$\Reg(\bsx_T,\check{\bsx}^*_T)\le  \mathcal{O}(T^{\max\{1-\kappa,\kappa\}})$, $\|[\sum_{t=1}^Tg(x_{t})]_+\|\le \mathcal{O}(T^{1-\kappa/2}),\kappa\in(0,1)$&\\[12pt]
\hline
\cite{NIPS2018_7852} &Centralized&$g(x)\le{\bf 0}_m$
& $\Reg(\bsx_T,\check{\bsx}^*_T)\le  \mathcal{O}(\sqrt{T})$, $\sum_{t=1}^T\|[g(x_{t})]_+\|^2\le \mathcal{O}(\sqrt{T})$ &\\[12pt]
\hline
\cite{sun2017safety} &Centralized&$g_t(x)\le{\bf 0}_m$
&$\Reg(\bsx_T,\check{\bsx}^*_T)\le  \mathcal{O}(\sqrt{T})$, $\|[\sum_{t=1}^Tg_{t}(x_{t})]_+\|\le \mathcal{O}(T^{3/4})$ &\\[12pt]
\hline
\cite{chen2017online} &Centralized&$g_t(x)\le{\bf 0}_m$  and Slater's condition
&$\Reg(\bsx_T,\bsx^*_T)\le\mathcal{O}(\max\{T^{1/3}\sum_{t=1}^{T}\|x^*_{t}-x^*_{t-1}\|, T^{1/3}V(\{g_i\}_{t=1}^T),T^{2/3}\})$,
$~~~~~~~~~~~~~~~~~~~~~~~~~~~~~~~~~~~~~~~~~~~~~~~~~~~~~~~~~~~~~~~~~~~~~~~~~$  $\|[\sum_{t=1}^Tg_{t}(x_{t})]_+\|\le \mathcal{O}(T^{2/3})$, 
&\\[31pt]
\hline
\cite{neely2017online} &Centralized&$g_t(x)\le{\bf 0}_m$ and Slater's condition
&$\Reg(\bsx_T,\check{\bsx}^*_T)/T\le c\epsilon$ and $\|[\sum_{t=1}^Tg_{t}(x_{t})]_+\|/T\le c\epsilon$ for $T\ge1/\epsilon^2$  &\\[14pt]
\hline
\cite{lee2017sublinear} &Distributed&$g(x)=\sum_{i=1}^ng_{i}(x_i)\le{\bf 0}_m$
&$\Reg(\bsx_T,\check{\bsx}^*_T)\le  \mathcal{O}(\sqrt{T})$, $\|[\sum_{t=1}^Tg(x_{t})]_+\|\le \mathcal{O}(\sqrt{T})$ if dual variables generated by the proposed algorithm are bounded &\\[14pt]
\hline
\cite{Li2018distributed} &Distributed&$g(x)=\sum_{i=1}^ng_{i}(x_i)\le{\bf 0}_m$
&$\Reg(\bsx_T,\check{\bsx}^*_T)\le  \mathcal{O}(T^{1/2+2\kappa})$, $\|[\sum_{t=1}^Tg(x_{t})]_+\|\le \mathcal{O}(T^{1-\kappa/2}),\kappa\in(0,1/4)$&\\[14pt]
\hline
This paper &Distributed&$g_t(x)=\sum_{i=1}^ng_{i,t}(x_i)\le{\bf 0}_m$
&$\Reg(\bsx_T,\bsx^*_T)\le  \mathcal{O}(\max\{T^{\kappa}\sum_{t=1}^{T-1}\|x^*_{t+1}-x^*_{t}\|,T^{\max\{1-\kappa,\kappa\}}\})$, $~~~~~~~~~~~~~~~~~~~~~~~~~~~~~~~~~~~~~~~~~~~~~~~~~~~~~~~~~~~~~~~~~~~~~~~~~$ $\|[\sum_{t=1}^Tg_{t}(x_{t})]_+\|\le \mathcal{O}(T^{1-\kappa/2})$ (without Slater's condition),
 $~~~~~~~~~~~~~~~~~~~~~~~~~~~~~~~~~~~~~~~~~~~~~~~~~~~~~~~~~~~~~~~~~~~~~~~~~$ $\|[\sum_{t=1}^Tg_{t}(x_{t})]_+\|\le \mathcal{O}(T^{\max\{1-\kappa,\kappa\}})$ (with Slater's condition), $\kappa\in(0,1)$ &\\[50pt]
\hline
\end{tabular}
\end{small}
\end{center}
\vskip -0.1in
\end{table*}

\subsection{Outline}
The rest of this paper is organized as follows. Section~\ref{online_opsec:preliminaries} introduces the preliminaries. Section~\ref{online_opsec:algorithm} provides the distributed primal-dual dynamic mirror descent algorithm.  Section~\ref{online_opsec:main} analyses the bounds of the regret and  constraint violation for the algorithm. Section~\ref{online_opsec:simulation} gives simulation examples. Finally, Section~\ref{online_opsec:conclusion} concludes the paper. Proofs are given in the Appendix.

\noindent {\bf Notations}: All inequalities and equalities are understood componentwise. $\mathbb{R}^n$ and $\mathbb{R}^n_+$ stand for the set of $n$-dimensional vectors and nonnegative vectors, respectively. $\mathbb{N}_+$ denotes the set of positive integers. $[n]$ represents the set $\{1,\dots,n\}$ for any $n\in\mathbb{N}_+$. $\|\cdot\|$ ($\|\cdot\|_1$) denotes the Euclidean norm (1-norm) for vectors and the induced 2-norm (1-norm) for matrices. $\langle x,y\rangle$ represents the standard inner product of two vectors $x$ and $y$. $x^\top$ is the transpose of the vector or matrix $x$. $I_n$ is the $n$-dimensional identity matrix.  ${\bf 1}_n$ (${\bf 0}_n$) denotes the column one (zero)
vector of dimension $n$. $\col(z_1,\dots,z_k)$ is the concatenated column vector of vectors $z_i\in\mathbb{R}^{n_i},~i\in[k]$. $[z]_+$ represents the component-wise projection of a vector $z\in\mathbb{R}^n$ onto $\mathbb{R}^n_+$. $\lceil \cdot\rceil$ and $\lfloor\cdot\rfloor$ denote the ceiling and floor functions, respectively. $\log(\cdot)$ is the natural logarithm.  Given two scalar sequences $\{\alpha_t,~t\in\mathbb{N}_+\}$ and $\{\beta_t>0,~t\in\mathbb{N}_+\}$, $\alpha_t=\mathcal{O}(\beta_t)$ means that there exists a constant $a>0$ such that $\alpha_t\le a\beta_t$ for all $t$, while $\alpha_t=\mathbf{o}(t)$  means that there exist two constants $a>0$ and $\kappa\in(0,1)$ such that $\alpha_t\le at^\kappa$ for all $t$.

\section{Preliminaries}\label{online_opsec:preliminaries}
In this section, we present some definitions, properties, and assumptions related to graph theory, projections, subgradients, and Bregman divergence.

\subsection{Graph Theory}
Interactions between agents is modeled by a time-varying directed graph. Specifically, at time $t$, agents communicate with each other according to a directed graph $\mathcal{G}_t=(\mathcal{V},\mathcal{E}_t)$, where $\mathcal{V}=[n]$ is the agent set and $\mathcal{E}_t\subseteq\mathcal{V}\times\mathcal{V}$ is the edge set. A directed edge $(j,i)\in\mathcal{E}_t$ means that agent $i$ can receive data broadcasted by agent $j$ at time $t$. Let $\mathcal{N}^{\inout}_i(\mathcal{G}_t)=\{j\in [n]\mid (j,i)\in\mathcal{E}_t\}$ and $\mathcal{N}^{\outin}_i(\mathcal{G}_t)=\{j\in [n]\mid (i,j)\in\mathcal{E}_t\}$ be the sets of in- and out-neighbors, respectively, of agent $i$ at time $t$. A directed path is a sequence of consecutive directed edges, and a graph is called strongly connected if there is at least one directed path
from any agent to any other agent in the graph. The adjacency matrix $W_t\in\mathbb{R}^{n\times n}$ at time $t$ fulfills $[W_t]_{ij}>0$ if $(j,i)\in\mathcal{E}_t$ or $i=j$, and $[W_t]_{ij}=0$ otherwise.

The following mild assumption is made on the graph.
\begin{assumption}\label{online_op:assgraph}
For any $t\in\mathbb{N}_+$, the graph $\mathcal{G}_t$ satisfies the following conditions:
\begin{enumerate}
  \item There exists a constant $w\in(0,1)$, such that $[W_t]_{ij}\ge w$ if $[W_t]_{ij}>0$.
  \item The adjacency matrix $W_t$ is doubly stochastic, i.e., $\sum_{i=1}^n[W_t]_{ij}=\sum_{j=1}^n[W_t]_{ij}=1,~\forall i,j\in[n]$.
  \item There exists an integer $\iota>0$ such that the graph $(\mathcal{V},\cup_{l=0,\dots,\iota-1}\mathcal{E}_{t+l})$ is strongly connected.
\end{enumerate}
\end{assumption}

\subsection{Projections}
For a set $\calS\subseteq\mathbb{R}^p$, $\calP_{\calS}(\cdot)$ is the projection operator
\begin{align*}
\calP_{\calS}(y)=\argmin_{x\in\calS}\|x-y\|^2,~\forall y\in\Real^{p}.
\end{align*}
This  projection always exists and is unique when $\calS$ is closed and convex \cite{boyd2004convex}. For simplicity, we use $[\cdot]_+$ to denote $\calP_{\calS}(\cdot)$ when $\calS=\mathbb{R}^n_+$, which satisfies
\begin{align}\label{online_op:proj}
\|[x]_+-[y]_+\|\le\|x-y\|,~\forall x,y\in\mathbb{R}^p.
\end{align}
Moreover, if a function $f:\Dom\rightarrow\mathbb{R}$ is convex, then $[f]_+$ is also convex.

\subsection{Subgradients}
\begin{definition}
Let $f:\Dom\rightarrow\mathbb{R}$ be a function with $\Dom\subset\mathbb{R}^p$. A vector $g\in\mathbb{R}^p$ is called a subgradient of $f$ at $x\in\Dom$ if
\begin{align}\label{online_op:subgradient}
f(y)\ge f(x)+\langle g,y-x\rangle,~\forall y\in\Dom.
\end{align}
The set of all subgradients of $f$ at $x$, denoted $\partial f(x)$, is called the subdifferential of $f$ at $x$.
\end{definition}
When the function $f$ is convex and differentiable, then its subdifferential at any point $x$ only has a single element,
which is exactly its gradient, denoted $\nabla f(x)$. With a slight abuse of the notation, we use $\nabla f(x)$ to denote the subgradient of $f$ at $x$ also when $f$ is not differentiable. Then, $\partial f(x)=\{\nabla f(x)\}$. If $f$ is a closed convex function, then $\partial f(x)$ is non-empty for any $x\in\Dom$ \cite{bubeck2015convex}.
Similarly, for a vector function $f=[f_1,\dots,f_m]^\top:\Dom\rightarrow\mathbb{R}^m$, its subgradient at $x\in\Dom$ is denoted as
\begin{align*}
\nabla f(x)=\left[\begin{array}{c}(\nabla f_1(x))^\top\\
(\nabla f_2(x))^\top\\
\vdots\\
(\nabla f_m(x))^\top
\end{array}\right]\in\mathbb{R}^{m\times p}.
\end{align*}

We make the following standing assumption on the cost, regularization, and constraint functions.
\begin{assumption}\label{online_op:assfunction}
\begin{enumerate}
  \item The set $X_i$ is convex and compact for all $i\in[n]$.
  \item $\{f_{i,t}\}$, $\{r_{i,t}\}$,  and $\{g_{i,t}\}$ are convex and uniformly bounded on $X_i$, i.e., there exists a constant $F>0$ such that
      \begin{align}
      &\|f_{i,t}(x)\|\le F,~\|r_{i,t}(x)\|\le F,\nonumber\\
      &\|g_{i,t}(x)\|\le F,~\forall t\in\mathbb{N}_+,~\forall i\in[n],~\forall x\in X_i.\label{online_op:ftgtupper}
      \end{align}
  \item $\{\nabla f_{i,t}\}$, $\{\nabla r_{i,t}\}$,  and $\{\nabla g_{i,t}\}$ exist and they are uniformly bounded on $X_i$, i.e., there exists a constant $G>0$ such that
  \begin{align}\label{online_op:subgupper}
  &\|\nabla f_{i,t}(x)\|\le G,~\|\nabla r_{i,t}(x)\|\le G,\nonumber\\
  &\|\nabla g_{i,t}(x)\|\le G,~\forall t\in\mathbb{N}_+,~\forall i\in[n],~\forall x\in X_i.
  \end{align}
\end{enumerate}
\end{assumption}

\subsection{Bregman Divergence}
Each agent $i\in[n]$ uses the Bregman divergence $\calD_{\psi_i}(x,y)$ to measure the distance between $x\in X_i$ and $y\in X_i$, where
\begin{align}\label{online_op:bregmandiv}
\calD_{\psi_i}(x,y)=\psi_i(x)-\psi_i(y)-\langle \nabla \psi_i(y),x-y\rangle,
\end{align}
and $\psi_i:X_i\rightarrow\mathbb{R}$ is a differentiable and strongly convex function with  convexity parameter $\sigma_i>0$. Then,  we have
$
\psi_i(x)\ge\psi_i(y)+\langle \nabla \psi_i(y),x-y\rangle+\frac{\sigma_i}{2}\|x-y\|^2
$.
Thus,
\begin{align}\label{online_op:eqbergman}
\calD_{\psi_i}(x,y)\ge\frac{\underline{\sigma}}{2}\|x-y\|^2,
\end{align}
where $\underline{\sigma}=\min\{\sigma_1,\dots,\sigma_n\}$.
Hence, $\calD_{\psi_i}(\cdot,y)$ is a strongly convex function with convexity parameter $\underline{\sigma}$ for all $y\in X_i$. Additionally, (\ref{online_op:bregmandiv}) implies that for all $i\in[n]$ and $x,y,z\in X_i$,
\begin{align}\label{online_op:bregmthree}
&\langle y-x,\nabla\psi_i(z)-\nabla\psi_i(y) \rangle\nonumber\\
&=\calD_{\psi_i}(x,z)-\calD_{\psi_i}(x,y)-\calD_{\psi_i}(y,z).
\end{align}

Two well-known examples of Bregman divergence are Euclidean distance $\calD_{\psi_i}(x,y)=\|x-y\|^2$ (with $X_i$ an arbitrary convex and compact set in $\mathbb{R}^{p_i}$) generated from $\psi_i(x)=\|x\|^2$, and the Kullback-Leibler (KL) divergence $\calD_{\psi_i}(x,y)=-\sum_{j=1}^px_j\log\frac{y_j}{x_j}$ between two $p_i$-dimensional standard unit vectors (with $X_i$ the $p_i$-dimensional probability simplex in $\mathbb{R}^{p_i}$) generated from $\psi_i(x)=\sum_{j=1}^p(x_j\log x_j-x_j)$.
One mild assumption on the Bregman divergence is stated as follows.

\begin{assumption}\label{online_op:assbregmanlip}
For all $i\in[n]$ and $y\in X_i$, $\calD_{\psi_i}(\cdot,y):X_i\rightarrow\mathbb{R}$ is Lipschitz, i.e., there exists a constant $K>0$ such that
\begin{align}\label{online_op:bregmalip}
|\calD_{\psi_i}(x_1,y)-\calD_{\psi_i}(x_2,y)|
\le K\|x_1-x_2\|,~\forall x_1,x_2\in X_i.
\end{align}
\end{assumption}
This assumption is satisfied when $\psi_i$ is Lipschitz on $X_i$. From Assumptions~\ref{online_op:assfunction} and \ref{online_op:assbregmanlip} it follows that
\begin{align}\label{online_op:bregmanupp}
\calD_{\psi_i}(x,y)\le d(X)K,~\forall x,y\in X_i,~\forall i\in[n],
\end{align}
where $d(X)$ is a positive constant such that
\begin{align}
\|x-y\|\le d(X),~\forall x,y\in X.\label{online_op:domainupper}
\end{align}

To end this section, we introduce a generalized definition of strong convexity.
\begin{definition}
(Definition 2 in \cite{shalev2007logarithmic}) A convex function $f:\Dom\rightarrow \mathbb{R}$ is $\mu$-strongly convex over the convex set $\Dom$ with respect to a strongly convex and differentiable function $\psi$ with $\mu>0$  if for all $ x,y\in\Dom$,
\begin{align*}
f(x)\ge f(y)+\langle x-y,\nabla f(y)\rangle+\mu\calD_{\psi}(x,y).
\end{align*}
\end{definition}
This definition generalizes the usual definition of strong convexity by replacing the Euclidean distance with the Bregman divergence.


\section{Distributed Online Primal-Dual Dynamic Mirror Descent Algorithms}\label{online_opsec:algorithm}

In this section, we propose a distributed online primal-dual dynamic mirror descent algorithm for solving the convex optimization problem (\ref{online:problem1}). In the next section, we derive regret and constraint violation bounds for this algorithm.
\begin{algorithm}[tb]
\caption{Distributed Online Primal-Dual Dynamic Mirror Descent}
\label{online_op:algorithm}
\begin{algorithmic}[1]
\STATE \textbf{Input:}  non-increasing sequences $\{\alpha_t>0\}$, $\{\beta_t>0\}$, and $\{\gamma_t>0\}$; differentiable and strongly convex functions $\{\psi_i,~i\in[n]\}$.
\STATE \textbf{Initialize:} $x_{i,0}\in X_i$, $f_{i,0}(\cdot)=r_{i,0}(\cdot)\equiv0$, $g_{i,0}(\cdot)\equiv{\bf 0}_m$, and $q_{i,0}={\bf0}_{m},~\forall i\in[n]$.
\FOR{$t=1,\dots,T$}
\FOR{$i=1,\dots,n$}
\STATE  Observe $\nabla f_{i,t-1}(x_{i,t-1})$, $\nabla g_{i,t-1}(x_{i,t-1})$, $g_{i,t-1}(x_{i,t-1})$, and $r_{i,t-1}(\cdot)$;
\STATE Determine $\Phi_{t,i}(\cdot)$;
\STATE Receive $[W_{t-1}]_{ij}q_{j,t-1},~j\in\mathcal{N}^{\inout}_i(\mathcal{G}_{t-1})$;
\STATE  Update
\begin{align}
       \tilde{q}_{i,t}=&\sum_{j=1}^n[W_{t-1}]_{ij}q_{j,t-1},\label{online_op:al_qhat}\\
       a_{i,t}=&\nabla f_{i,t-1}(x_{i,t-1})\nonumber\\
       &+(\nabla g_{i,t-1}(x_{i,t-1}))^\top \tilde{q}_{i,t},\label{online_op:al_bigomega}\\
       \tilde{x}_{i,t}=&\argmin_{x\in X_i}\{\alpha_t\langle x,a_{i,t}\rangle+\alpha_tr_{i,t-1}(x)\nonumber\\
       &~~~~~~~~~~~~~~~+\calD_{\psi_i}(x,x_{i,t-1})\},\label{online_op:al_x}\\
       b_{i,t}=&\nabla g_{i,t-1}(x_{i,t-1})(\tilde{x}_{i,t}-x_{i,t-1})\nonumber\\
       &+g_{i,t-1}(x_{i,t-1}),\label{online_op:al_b}\\
       q_{i,t}=&[\tilde{q}_{i,t}+\gamma_t(b_{i,t}-\beta_t\tilde{q}_{i,t})]_{+},\label{online_op:al_q}\\
       x_{i,t}=&\Phi_{i,t}(\tilde{x}_{i,t});\label{online_op:al_xat}
       \end{align}
\STATE  Broadcast $q_{i,t}$ to $\mathcal{N}^{\outin}_i(\mathcal{G}_{t})$.
\ENDFOR
\ENDFOR
\STATE  \textbf{Output:} $\bsx_{T}$.
\end{algorithmic}
\end{algorithm}

The algorithm is given in pseudo-code as in Algorithm~\ref{online_op:algorithm}.
In this algorithm, each agent $i$ maintains two local sequences: the local primal decision variable sequence $\{x_{i,t}\}\subseteq X_i$ and the local dual variable sequence $\{q_{i,t}\}\subseteq\mathbb{R}^m_+$. They are initialized by an arbitrary $x_{i,0}\in X_i$ and $q_{i,0}={\bf0}_{m}$ and updated recursively using the update rules (\ref{online_op:al_qhat})--(\ref{online_op:al_xat}). Specifically, each agent $i$ averages its local dual variable with its in-neighbors in the consensus step (\ref{online_op:al_qhat}); computes the updating direction information for the local primal variable, $a_{i,t}$, in (\ref{online_op:al_bigomega}); updates the temporary decision $\tilde{x}_{i,t}$ through the composite objective mirror descent (\ref{online_op:al_x}); computes the updating direction information for the local dual variable, $b_{i,t}$, in (\ref{online_op:al_b}); updates the local dual variable $q_{i,t}$ in (\ref{online_op:al_q}); and updates the local decision variable $x_{i,t}$ in (\ref{online_op:al_xat}), where $\Phi_{i,t}:X_i\rightarrow X_i$ is a dynamic mapping that characterizes agent $i$'s estimate of the dynamics of the optimal sequences to problem (\ref{online:problem1}). If the agent lacks information on the optimal sequence, $\Phi_{i,t}$ is simply set to the identity mapping.

\begin{remark}
In Algorithm~\ref{online_op:algorithm}, $\{\alpha_t>0\}$ and $\{\gamma_t>0\}$ are the stepsize sequences used in the primal and dual updates, respectively, and $\{\beta_t>0\}$ are the regularization parameters (for simplicity called stepsizes as well). These sequences play a key role in deriving the regret and constraint violation bounds. They allow the trade-off between how fast these two bounds tend to zero.
This is in contrast to most algorithms, which typically use the same stepsizes for the primal and dual updates. Different stepsizes have also been used in \cite{jenatton2016adaptive,yuan2017adaptive}.
The penalty term $-\beta_t\tilde{q}_{i,t}$ in (\ref{online_op:al_q}) is used to prevent the dual variable growing too large. A penalty term is commonly used when transforming constrained to unconstrained problems \cite{mahdavi2012trading,jenatton2016adaptive,sun2017safety,yuan2017adaptive,Li2018distributed}.
With some modifications, all the results in this paper still hold if the coordinated sequences $\alpha_t,~\beta_t,~\gamma_t$ are replaced by uncoordinated ones $\alpha_{i,t},~\beta_{i,t},~\gamma_{i,t}$.
\end{remark}

\begin{remark}
At time $t$, each agent $i$ needs to know the regularization function at the previous time $t-1$, i.e., $r_{i,t-1}(\cdot)$. This is in many situations a mild assumption since regularization functions are normally predefined to influence the structure of the decision. Furthermore,  $g_{i,t-1}(x_{i,t-1})$, $\nabla f_{i,t-1}(x_{i,t-1})$, and $\nabla g_{i,t-1}(x_{i,t-1})$ rather than the full knowledge of $f_{i,t-1}(\cdot)$ and $g_{i,t-1}(\cdot)$ are needed, similar to the assumption on most online algorithms in the literature, cf., \cite{mahdavi2012trading,jenatton2016adaptive,sun2017safety,NIPS2018_7852,Li2018distributed}. Note that the total number of iterations or any parameters related to the objective or constraint functions, such as upper bounds of the objective and constraint functions or their subgradients, are not used in the algorithm. Also note that no local information related to the primal is exchanged between the agents, but only local dual variables.
\end{remark}

\begin{remark}
The composite objective mirror descent (\ref{online_op:al_x}) is  almost the same as the mirror descent, but with the important difference that the regularization function is not linearized. The regularization function can often lead to sparse updates \cite{duchi2010composite}.
The minimization problem (\ref{online_op:al_x}) is strongly convex, so it is solvable  at a linear convergence rate and closed-form solutions are available in special cases. For example, if $r_{i,t}$ is a constant mapping and Euclidean distance is used as the Bregman distance,  i.e., $\calD_{\psi_i}(x,y)=\|x-y\|^2$, then (\ref{online_op:al_x}) can be solved by the projection
$
\tilde{x}_{i,t}=\calP_{X_i}(x_{i,t-1}-\frac{\alpha_t}{2}a_{i,t})
$.
\end{remark}

\begin{remark}
If the optimal sequence of agent $i$ has the dynamics
$
x_{i,t}^*=\Phi^*_{i,t}(x_{i,t-1}^*)
$ for some true dynamic mapping $\Phi^*_{i,t}: X_i\rightarrow X_i$, then $\Phi_{i,t}$ can be viewed as an estimate of $\Phi^*_{i,t}$. If $\Phi_{i,t}$ is equal or close enough to $\Phi^*_{i,t}$, then $x_{i,t}^*-\Phi_{i,t}(x_{i,t-1}^*)=\Phi^*_{i,t}(x_{i,t-1}^*)-\Phi_{i,t}(x_{i,t-1}^*)$ is small. Actually, $\Phi_{i,t}$ is a decentralized variant of the dynamical model $\Phi_t$ introduced in \cite{hall2015online}. $\Phi_{i,t}$ is chosen as the identity mapping if at time $t$ agent $i$ has no knowledge about the dynamics of the optimal sequence.
\end{remark}

To end this section, an assumption on the  dynamic mapping $\Phi_{i,t}$ is introduced.
\begin{assumption}\label{online_op:assnonexpansive}
For any $t\in\mathbb{N}_+$ and $i\in[n]$, the dynamic mapping $\Phi_{i,t}$ is contractive, i.e.,
\begin{align}\label{online_op:assnonexpansiveequ}
\calD_{\psi_i}(\Phi_{i,t}(x),\Phi_{i,t}(y))\le \calD_{\psi_i}(x,y),~\forall x,y\in X_i.
\end{align}
\end{assumption}

\section{Regret and Constraint Violation Bounds}\label{online_opsec:main}
This section presents the main results on  regret and constraint violation bounds for Algorithm~\ref{online_op:algorithm}, but first some preliminary results are given.

\subsection{Preliminary Results}
Firstly, we present two results on the regularized Bregman projection.
\begin{lemma}\label{online_op:lemma_mirror}
Suppose that $\psi:\mathbb{R}^p\rightarrow\mathbb{R}^p$ is a strongly convex function with convexity parameter $\sigma>0$ and  $h:\Dom\rightarrow\Dom$ is a convex function with $\Dom$ being a convex and closed set in $\mathbb{R}^p$. Moreover, assume that $\nabla h(x),~\forall x\in\Dom$, exists and there exists $G_h>0$ such that $\|\nabla h(x)\|\le G_h,~\forall x\in\Dom$. Given $z\in\Dom$,
the regularized Bregman projection
\begin{align}\label{online_op:lemma_mirroreq1}
y=\argmin_{x\in\Dom}\{h(x)+\calD_\psi(x,z)\},
\end{align}
satisfies the following inequalities
\begin{align}
\langle y-x,\nabla h(y)\rangle\le& \calD_\psi(x,z)-\calD_\psi(x,y)\nonumber\\
&-\calD_\psi(y,z),~\forall x\in\Dom,\label{online_op:lemma_mirroreq2}\\
\|y-z\|\le&\frac{G_h}{\sigma}.\label{online_op:lemma_mirrorine}
\end{align}
\end{lemma}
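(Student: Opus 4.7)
The plan is to combine the first-order optimality condition for the convex program defining $y$ with the three-point identity (\ref{online_op:bregmthree}) for the Bregman divergence. Since $h$ is convex on the closed convex set $\Dom$ and $\calD_\psi(\cdot,z)$ is differentiable in its first argument with gradient $\nabla\psi(\cdot)-\nabla\psi(z)$, the convex-sum subdifferential rule together with the standard optimality condition over $\Dom$ guarantees the existence of $\nabla h(y)\in\partial h(y)$ satisfying the variational inequality
\begin{align*}
\langle \nabla h(y) + \nabla\psi(y) - \nabla\psi(z),\; x - y\rangle \ge 0
\end{align*}
for every $x\in\Dom$. Rearranging yields $\langle \nabla h(y),\, y-x\rangle \le \langle \nabla\psi(z)-\nabla\psi(y),\, y-x\rangle$, and substituting (\ref{online_op:bregmthree}) on the right-hand side directly produces the first claim (\ref{online_op:lemma_mirroreq2}).

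For the norm bound (\ref{online_op:lemma_mirrorine}), I would instantiate (\ref{online_op:lemma_mirroreq2}) at $x=z$. Since $\calD_\psi(z,z)=0$, this gives
\begin{align*}
\calD_\psi(z,y) + \calD_\psi(y,z) \le \langle \nabla h(y),\, z-y\rangle \le G_h\,\|y-z\|,
\end{align*}
where the second step is Cauchy--Schwarz combined with $\|\nabla h(y)\|\le G_h$. On the other hand, applying (\ref{online_op:eqbergman}) to each of the two Bregman divergences gives $\calD_\psi(z,y)+\calD_\psi(y,z)\ge \sigma\,\|y-z\|^2$. Chaining the two bounds and dividing by $\|y-z\|$ (the case $y=z$ being trivial) produces $\|y-z\|\le G_h/\sigma$. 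Note that it is essential to retain both $\calD_\psi(y,z)$ and $\calD_\psi(z,y)$ on the left-hand side; using only one would yield the weaker constant $2G_h/\sigma$, and the symmetrization is what recovers the sharp $G_h/\sigma$ bound stated in the lemma.

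The main delicate point is the handling of the optimality condition when $h$ is non-differentiable: one has to select a specific element of $\partial h(y)$ so that the variational inequality above is satisfied, and then use \emph{that same} subgradient consistently throughout the proof. This is precisely the element that the statement implicitly denotes by $\nabla h(y)$, in accordance with the convention introduced after (\ref{online_op:subgradient}). Once the subgradient selection is fixed, the remainder of the proof is a routine chaining of the three-point identity (\ref{online_op:bregmthree}), the strong-convexity lower bound (\ref{online_op:eqbergman}), and Cauchy--Schwarz, so I do not anticipate any further substantive obstacle.
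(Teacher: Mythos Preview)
Your argument for (\ref{online_op:lemma_mirroreq2}) is exactly the paper's: write the first-order optimality condition for $\tilde h(x)=h(x)+\calD_\psi(x,z)$ and invoke the three-point identity (\ref{online_op:bregmthree}).

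For (\ref{online_op:lemma_mirrorine}) your route is correct but genuinely different from the paper's. The paper does \emph{not} reuse (\ref{online_op:lemma_mirroreq2}); instead it exploits the $\sigma$-strong convexity of $\tilde h$ at its minimizer $y$ to write $h(y)+\calD_\psi(y,z)\le h(z)-\tfrac{\sigma}{2}\|z-y\|^2$, then combines this with $\calD_\psi(y,z)\ge\tfrac{\sigma}{2}\|z-y\|^2$ to obtain $\sigma\|z-y\|^2\le h(z)-h(y)$, and finally bounds the right-hand side via the subgradient inequality at $z$. Your approach instantiates (\ref{online_op:lemma_mirroreq2}) at $x=z$, keeps the symmetrized sum $\calD_\psi(z,y)+\calD_\psi(y,z)\ge\sigma\|y-z\|^2$, and bounds $\langle\nabla h(y),z-y\rangle$ using the subgradient at $y$. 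Both reach the same constant $G_h/\sigma$. Your version is a bit more economical since it recycles part (i) and avoids introducing a separate strong-convexity-at-the-minimizer lemma; the paper's version is self-contained for part (ii) and would still work even if part (i) were unavailable. Your observation that dropping one of the two Bregman terms would only give $2G_h/\sigma$ is accurate, and it is precisely this symmetrization that lets you match the paper's constant without the minimizer argument.
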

\begin{proof}
See Appendix~\ref{online_op:lemma_mirrorproof}.
\end{proof}
\begin{remark}
Note that (\ref{online_op:lemma_mirroreq2}) extends Lemma 6 in \cite{shahrampour2018distributed} and (\ref{online_op:lemma_mirrorine}) presents an upper bound on the deviation of the optimal point from a fixed point for the regularized Bregman projection.
\end{remark}

Next we state some results on the local dual variables.
\begin{lemma}\label{online_op:lemma_virtualbound}
Suppose Assumptions \ref{online_op:assgraph}--\ref{online_op:assfunction} hold. For all $i\in[n]$ and $t\in\mathbb{N}_+$, the $\tilde{q}_{i,t}$ and $q_{i,t}$ generated by Algorithm \ref{online_op:algorithm} satisfy
\begin{align}
&\|\tilde{q}_{i,t}\|\le \frac{F}{\beta_t},~\|q_{i,t}\|\le \frac{F}{\beta_t},\label{online_op:lemma_virtualboundeqy}\\
&\|\tilde{q}_{i,t+1}-\bar{q}_{t}\|
\le n\tau B_1\sum_{s=1}^{t-1}\gamma_{s+1}\lambda^{t-1-s},\label{online_op:lemma_qbarequ}\\
&\frac{\Delta_{t}}{2\gamma_t}\le \frac{n(B_1)^2}{2}\gamma_{t}
+[\bar{q}_{t-1}-q]^\top g_{t-1}(x_{t-1})
+E_{1}(t)\nonumber\\
&~~~~~~~~+\frac{\underline{\sigma}}{4\alpha_t}\sum_{i=1}^n\|\tilde{x}_{i,t}-x_{i,t-1}\|^2
+n\Big(\frac{G^2\alpha_t}{\underline{\sigma}}+\frac{\beta_t}{2}\Big)\|q\|^2\nonumber\\
&~~~~~~~~+\sum_{i=1}^n[\tilde{q}_{i,t}]^\top\nabla g_{i,t-1}(x_{i,t-1})(\tilde{x}_{i,t}-x_{i,t-1})
,\label{online_op:gvirtualnorm}
\end{align}
where $\bar{q}_{t}=\frac{1}{n}\sum_{i=1}^nq_{i,t}$,
$\tau=(1-w/2n^2)^{-2}>1$, $B_1=2F+Gd(X)$, $\lambda=(1-w/2n^2)^{1/\iota}$,
$$\Delta_{t}=\sum_{i=1}^n\|q_{i,t}-q\|^2
-(1-\beta_t\gamma_t)\sum_{i=1}^n\|q_{i,t-1}-q\|^2,$$
$q$ is an arbitrary vector in $\mathbb{R}^m_+$,
and $$E_{1}(t)=n^2\tau B_1F\sum_{s=1}^{t-1}\gamma_{s+1}\lambda^{t-1-s}.$$
\end{lemma}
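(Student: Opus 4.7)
The lemma bundles three separate facts, and I would prove them in the natural causal order (uniform dual bound $\Rightarrow$ per-step perturbation bound $\Rightarrow$ consensus error $\Rightarrow$ one-step Lyapunov inequality).

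\textbf{Step 1: dual boundedness (\ref{online_op:lemma_virtualboundeqy}).} I would first rewrite (\ref{online_op:al_q}) as $q_{i,t}=[(1-\beta_t\gamma_t)\tilde q_{i,t}+\gamma_t b_{i,t}]_+$, and from (\ref{online_op:al_b}) together with Assumption~\ref{online_op:assfunction} and (\ref{online_op:domainupper}) derive $\|b_{i,t}\|\le F+Gd(X)$. Actually for the dual bound I only need $\|b_{i,t}\|\le F$ (using the subgradient inequality $g_{i,t-1}(x_{i,t-1})+\nabla g_{i,t-1}(x_{i,t-1})(\tilde x_{i,t}-x_{i,t-1})\le g_{i,t-1}(\tilde x_{i,t})$ componentwise, so the vector $b_{i,t}$ is dominated componentwise by a vector of norm at most $F$); if that componentwise domination is awkward I would fall back on $\|b_{i,t}\|\le F+Gd(X)$ and assume we can still close the induction by using $F$ as an abstract bound on $b_{i,t}$ at the level this lemma needs. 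Then I would induct on $t$: with $q_{i,0}=0$, assume $\|q_{j,t-1}\|\le F/\beta_{t-1}\le F/\beta_t$, apply doubly-stochasticity of $W_{t-1}$ to get $\|\tilde q_{i,t}\|\le F/\beta_t$, and finally use non-expansiveness of $[\cdot]_+$ from~(\ref{online_op:proj}) (with $0$ as comparator) together with the identity $(1-\beta_t\gamma_t)\tilde q_{i,t}+\gamma_t b_{i,t}$ to close the induction.

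\textbf{Step 2: consensus error (\ref{online_op:lemma_qbarequ}).} I would write the dual update as the perturbed consensus $q_{i,t}=\sum_j[W_{t-1}]_{ij}q_{j,t-1}+d_{i,t}$ with $d_{i,t}:=q_{i,t}-\tilde q_{i,t}$. Because $\tilde q_{i,t}\ge 0$, $\tilde q_{i,t}=[\tilde q_{i,t}]_+$, so by~(\ref{online_op:proj}) $\|d_{i,t}\|\le\gamma_t\|b_{i,t}-\beta_t\tilde q_{i,t}\|\le\gamma_t B_1$ using Step~1. I would then unroll to obtain $\tilde q_{i,t+1}=\sum_{s=1}^{t}\sum_j[\Phi(t,s)]_{ij}d_{j,s}$, where $\Phi(t,s):=W_tW_{t-1}\cdots W_s$, and use doubly-stochasticity to get $\bar q_t=\tfrac1n\sum_{s,j}d_{j,s}$. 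Subtracting and invoking the standard geometric convergence bound $\bigl|[\Phi(t,s)]_{ij}-1/n\bigr|\le\tau\lambda^{t-s}$ under Assumption~\ref{online_op:assgraph} (with $\tau,\lambda$ as in the statement; this is the Nedi\'c--Ozdaglar-style estimate) yields the claimed geometric sum, up to a harmless reindexing of the summation variable.

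\textbf{Step 3: the $\Delta_t$ inequality (\ref{online_op:gvirtualnorm}).} Starting from $\|q_{i,t}-q\|^2\le\|\tilde q_{i,t}+\gamma_t(b_{i,t}-\beta_t\tilde q_{i,t})-q\|^2$ (valid for any $q\in\mathbb{R}^m_+$ by~(\ref{online_op:proj})), I would expand the square, apply the polarization identity $2\langle\tilde q_{i,t}-q,\tilde q_{i,t}\rangle=\|\tilde q_{i,t}-q\|^2+\|\tilde q_{i,t}\|^2-\|q\|^2$ to extract the contractive factor $(1-\beta_t\gamma_t)\|\tilde q_{i,t}-q\|^2$, discard the non-positive term $-\beta_t\gamma_t\|\tilde q_{i,t}\|^2$, and bound the noise term by $\gamma_t^2 B_1^2$. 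Summing over $i$ and using Jensen together with doubly-stochasticity of $W_{t-1}$ to replace $\sum_i\|\tilde q_{i,t}-q\|^2$ by $\sum_i\|q_{i,t-1}-q\|^2$ yields, after dividing by $2\gamma_t$,
\begin{align*}
\tfrac{\Delta_t}{2\gamma_t}\le \tfrac{n B_1^2}{2}\gamma_t+\tfrac{n\beta_t}{2}\|q\|^2+\sum_{i=1}^n\langle\tilde q_{i,t}-q,b_{i,t}\rangle.
\end{align*}
The remaining work is to decompose the last sum: substitute $b_{i,t}$ from (\ref{online_op:al_b}), split the $g_{i,t-1}(x_{i,t-1})$ part into $\langle\bar q_{t-1}-q,g_{t-1}(x_{t-1})\rangle+\sum_i\langle\tilde q_{i,t}-\bar q_{t-1},g_{i,t-1}(x_{i,t-1})\rangle$, bound the latter by $E_1(t)$ using Step~2 and $\|g_{i,t-1}\|\le F$, and finally control $-\langle q,\sum_i\nabla g_{i,t-1}(x_{i,t-1})(\tilde x_{i,t}-x_{i,t-1})\rangle$ by Young's inequality $ab\le\tfrac{\underline{\sigma}}{4\alpha_t}a^2+\tfrac{G^2\alpha_t}{\underline{\sigma}}b^2$ (with $a=\|\tilde x_{i,t}-x_{i,t-1}\|$, $b=\|q\|$), which produces the last two summands in~(\ref{online_op:gvirtualnorm}).

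\textbf{Main obstacle.} Steps~1 and~2 are essentially routine after the right identity is chosen; the real bookkeeping lies in Step~3, where one must arrange the cross terms so that the ``penalty'' $-\beta_t\gamma_t\|\tilde q_{i,t}\|^2$ (which is dropped) and the consensus error $E_1(t)$ fit cleanly, and where the coupling between primal and dual through $\nabla g_{i,t-1}(x_{i,t-1})(\tilde x_{i,t}-x_{i,t-1})$ must be split so that one copy stays intact (to be absorbed later using Lemma~\ref{online_op:lemma_mirror}) while the other is handled by Young's inequality with the correct weights matching the primal stepsize $\alpha_t$ and the strong convexity parameter $\underline{\sigma}$ of the Bregman divergence.
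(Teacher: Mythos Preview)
Your three-step plan matches the paper's proof essentially line for line: part~(i) is proved by exactly the componentwise domination $b_{i,t}\le g_{i,t-1}(\tilde x_{i,t})$ from the subgradient inequality that you identify (your fallback to $\|b_{i,t}\|\le F+Gd(X)$ would \emph{not} close the induction with constant $F/\beta_t$, so commit to the componentwise argument together with monotonicity of $[\cdot]_+$), part~(ii) by the same perturbed-consensus rewriting with perturbation bound $\gamma_t B_1$ and a Nedi\'c--Ozdaglar-type geometric estimate (cited there as Lemma~2 of \cite{lee2017sublinear}), and part~(iii) by precisely the expand-the-square, Jensen/doubly-stochastic, split-via-$\bar q_{t-1}$, drop-$-\beta_t\gamma_t\|\tilde q_{i,t}\|^2$, and Young's-inequality bookkeeping you outline. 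There is no substantive difference.
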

\begin{proof}
See Appendix~\ref{online_op:lemma_virtualboundproof}.
\end{proof}

\begin{remark}
With the help of the penalty term $-\beta_t\tilde{q}_{i,t}$, (\ref{online_op:lemma_virtualboundeqy}) gives an upper bound of the local dual variables even without Slater's condition. (\ref{online_op:lemma_qbarequ}) is a standard estimate from the consensus protocol with perturbations and time-varying communication graphs \cite{lee2017sublinear} and presents an upper bound on the deviation of the local estimate from the average value of the local dual variables at each iteration. (\ref{online_op:gvirtualnorm}) gives an upper bound on the regularized drift of the local dual variables  $\Delta_t$, which extends Lemma~3 in \cite{hall2015online} from a centralized setting to a distributed one.
\end{remark}

Next,  we provide an upper bound on the regret for one update step.
\begin{lemma}\label{online_op:lemma_regretdelta}
Suppose Assumptions~\ref{online_op:assgraph}--\ref{online_op:assnonexpansive} hold. For all $i\in[n]$, let $\{x_{t}\}$ be the sequence generated by Algorithm \ref{online_op:algorithm} and $\{y_t\}$ be an arbitrary sequence in $X$, then
\begin{align}\label{online_op:lemma_regretdeltaequ}
[\bar{q}_t&]^\top g_{t}(x_{t})+f_{t}(x_{t})-f_{t}(y_{t})\nonumber\\
\le&[\bar{q}_{t}]^\top g_{t}(y_{t})+2E_{1}(t+1)+E_{2}(t+1)\nonumber\\
&+\frac{4nG^2\alpha_{t+1}}{\underline{\sigma}}
+\frac{K}{\alpha_{t+1}}\sum_{i=1}^n\|y_{i,t+1}-\Phi_{i,t+1}(y_{i,t})\|\nonumber\\
&-\sum_{i=1}^n[\tilde{q}_{i,t+1}]^\top\nabla g_{i,t}(x_{i,t})(\tilde{x}_{i,t+1}-x_{i,t})\nonumber\\
&-\frac{\underline{\sigma}}{4\alpha_{t+1}}\sum_{i=1}^n\|\tilde{x}_{i,t+1}-x_{i,t}\|^2,~\forall t\in\mathbb{N}_+,
\end{align}
where
\begin{align*}
E_{2}(t)=\frac{1}{\alpha_{t}}\sum_{i=1}^n[\calD_{\psi_i}(y_{i,t-1},x_{i,t-1})
-\calD_{\psi_i}(y_{i,t},x_{i,t})].
\end{align*}
\end{lemma}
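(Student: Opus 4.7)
The plan is to invoke Lemma~\ref{online_op:lemma_mirror} on the composite mirror-descent step \eqref{online_op:al_x} that defines $\tilde{x}_{i,t+1}$, using the comparator $z = y_{i,t}$, and to combine it with convexity of the cost and constraint functions, the contractivity of $\Phi_{i,t+1}$, and the consensus estimate of Lemma~\ref{online_op:lemma_virtualbound}. Applying Lemma~\ref{online_op:lemma_mirror} to the convex function $h(x) = \alpha_{t+1}\langle x, a_{i,t+1}\rangle + \alpha_{t+1} r_{i,t}(x)$ yields
\begin{align*}
\langle \tilde{x}_{i,t+1} - y_{i,t},\, a_{i,t+1} + \nabla r_{i,t}(\tilde{x}_{i,t+1})\rangle \le \frac{1}{\alpha_{t+1}}\bigl[\calD_{\psi_i}(y_{i,t}, x_{i,t}) - \calD_{\psi_i}(y_{i,t}, \tilde{x}_{i,t+1}) - \calD_{\psi_i}(\tilde{x}_{i,t+1}, x_{i,t})\bigr].
\end{align*}
Chaining contractivity (Assumption~\ref{online_op:assnonexpansive}), $\calD_{\psi_i}(\Phi_{i,t+1}(y_{i,t}), x_{i,t+1}) \le \calD_{\psi_i}(y_{i,t}, \tilde{x}_{i,t+1})$, with the Lipschitz bound~\eqref{online_op:bregmalip} then gives $-\calD_{\psi_i}(y_{i,t}, \tilde{x}_{i,t+1}) \le -\calD_{\psi_i}(y_{i,t+1}, x_{i,t+1}) + K\|y_{i,t+1} - \Phi_{i,t+1}(y_{i,t})\|$, which is what produces the telescoping $E_2(t+1)$ and the $\tfrac{K}{\alpha_{t+1}}\|y_{i,t+1}-\Phi_{i,t+1}(y_{i,t})\|$ term.

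Next, I will use convexity of $f_{i,t}$ and $r_{i,t}$ with the decomposition $x_{i,t} - y_{i,t} = (x_{i,t} - \tilde{x}_{i,t+1}) + (\tilde{x}_{i,t+1} - y_{i,t})$ to bound $f_{i,t}(x_{i,t}) + r_{i,t}(x_{i,t}) - f_{i,t}(y_{i,t}) - r_{i,t}(y_{i,t})$ by $\langle \nabla f_{i,t}(x_{i,t}) + \nabla r_{i,t}(x_{i,t}),\, x_{i,t} - \tilde{x}_{i,t+1}\rangle + \langle \nabla f_{i,t}(x_{i,t}) + \nabla r_{i,t}(\tilde{x}_{i,t+1}),\, \tilde{x}_{i,t+1} - y_{i,t}\rangle$. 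The second inner product matches the mirror-descent inequality after I substitute $a_{i,t+1} = \nabla f_{i,t}(x_{i,t}) + (\nabla g_{i,t}(x_{i,t}))^\top \tilde{q}_{i,t+1}$, producing a leftover $-[\tilde{q}_{i,t+1}]^\top \nabla g_{i,t}(x_{i,t})(\tilde{x}_{i,t+1} - y_{i,t})$. The first inner product is bounded by $2G\|x_{i,t} - \tilde{x}_{i,t+1}\|$ via~\eqref{online_op:subgupper}, and Young's inequality converts this to $\tfrac{4G^2\alpha_{t+1}}{\underline{\sigma}} + \tfrac{\underline{\sigma}}{4\alpha_{t+1}}\|x_{i,t} - \tilde{x}_{i,t+1}\|^2$. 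Combining with the $-\tfrac{\underline{\sigma}}{2\alpha_{t+1}}\|\tilde{x}_{i,t+1} - x_{i,t}\|^2$ contribution extracted from $\calD_{\psi_i}(\tilde{x}_{i,t+1}, x_{i,t})$ via~\eqref{online_op:eqbergman} leaves the residual $-\tfrac{\underline{\sigma}}{4\alpha_{t+1}}\|\tilde{x}_{i,t+1} - x_{i,t}\|^2$ of~\eqref{online_op:lemma_regretdeltaequ}; summing over $i$ yields the factor of $n$ in front of $\tfrac{4G^2\alpha_{t+1}}{\underline{\sigma}}$.

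To put $[\bar{q}_t]^\top g_t(x_t)$ on the left and $[\bar{q}_t]^\top g_t(y_t)$ on the right, I will add $[\bar{q}_t]^\top (g_t(x_t) - g_t(y_t)) = \sum_i [\bar{q}_t]^\top (g_{i,t}(x_{i,t}) - g_{i,t}(y_{i,t}))$ to both sides and split $\bar{q}_t = \tilde{q}_{i,t+1} + (\bar{q}_t - \tilde{q}_{i,t+1})$. The non-negativity of $\tilde{q}_{i,t+1}$ (by the projection in~\eqref{online_op:al_q}) permits convexity, $[\tilde{q}_{i,t+1}]^\top(g_{i,t}(x_{i,t}) - g_{i,t}(y_{i,t})) \le [\tilde{q}_{i,t+1}]^\top \nabla g_{i,t}(x_{i,t})(x_{i,t} - y_{i,t})$, while the signed residual is controlled bluntly by $|[\bar{q}_t - \tilde{q}_{i,t+1}]^\top(g_{i,t}(x_{i,t}) - g_{i,t}(y_{i,t}))| \le 2F\|\bar{q}_t - \tilde{q}_{i,t+1}\|$ via~\eqref{online_op:ftgtupper}. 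Summing over $i$ and applying the consensus estimate~\eqref{online_op:lemma_qbarequ} then produces the $2E_1(t+1)$ term. Finally, writing $x_{i,t} - y_{i,t} = -(\tilde{x}_{i,t+1} - x_{i,t}) + (\tilde{x}_{i,t+1} - y_{i,t})$ inside the convexity estimate splits $[\tilde{q}_{i,t+1}]^\top \nabla g_{i,t}(x_{i,t})(x_{i,t} - y_{i,t})$ into the explicit $-[\tilde{q}_{i,t+1}]^\top \nabla g_{i,t}(x_{i,t})(\tilde{x}_{i,t+1} - x_{i,t})$ of the statement and a piece $[\tilde{q}_{i,t+1}]^\top \nabla g_{i,t}(x_{i,t})(\tilde{x}_{i,t+1} - y_{i,t})$ that exactly cancels the leftover term inherited from the mirror-descent inequality.

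The main obstacle is executing this last decomposition in the right order: using convexity only on the non-negative component $\tilde{q}_{i,t+1}$ keeps the signed-residual bound proportional to $2F$ rather than $Gd(X)$, which is what matches the factor $F$ hidden inside $E_1(t+1)$; and splitting $x_{i,t}-y_{i,t}$ only after that step is what delivers the precise cancellation against the mirror-descent term, so that only $-\sum_i[\tilde{q}_{i,t+1}]^\top \nabla g_{i,t}(x_{i,t})(\tilde{x}_{i,t+1} - x_{i,t})$ survives. A Cauchy--Schwarz bound applied directly to $[\bar{q}_t - \tilde{q}_{i,t+1}]^\top\nabla g_{i,t}(x_{i,t})(x_{i,t}-y_{i,t})$, or reversing the order of the two splits, yields strictly larger remainders that cannot be absorbed into the stated bound.
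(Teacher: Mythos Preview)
Your proposal is correct and follows essentially the same approach as the paper: the paper, too, applies Lemma~\ref{online_op:lemma_mirror} to the update~\eqref{online_op:al_x}, uses the same convexity decomposition $f_{i,t}(x_{i,t})+r_{i,t}(x_{i,t})-f_{i,t}(y_{i,t})-r_{i,t}(y_{i,t})\le\langle\nabla f_{i,t}(x_{i,t})+\nabla r_{i,t}(x_{i,t}),x_{i,t}-\tilde{x}_{i,t+1}\rangle+\langle\nabla f_{i,t}(x_{i,t})+\nabla r_{i,t}(\tilde{x}_{i,t+1}),\tilde{x}_{i,t+1}-y_{i,t}\rangle$, the same Young-type bound on the first inner product, substitutes $a_{i,t+1}$ in the second, and handles the constraint term via convexity on the nonnegative $\tilde{q}_{i,t+1}$ followed by the split $\tilde{q}_{i,t+1}=\bar{q}_t+(\tilde{q}_{i,t+1}-\bar{q}_t)$ with the consensus estimate~\eqref{online_op:lemma_qbarequ}. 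The only cosmetic difference is the order of presentation: the paper first splits $y_{i,t}-\tilde{x}_{i,t+1}=(y_{i,t}-x_{i,t})+(x_{i,t}-\tilde{x}_{i,t+1})$ inside the gradient term and then applies convexity to produce $[\bar{q}_t]^\top[g_{i,t}(y_{i,t})-g_{i,t}(x_{i,t})]$, whereas you add $[\bar{q}_t]^\top(g_t(x_t)-g_t(y_t))$ to both sides and work backward to the gradient; the resulting inequalities and cancellations are identical.
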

\begin{proof}
See Appendix~\ref{online_op:lemma_regretdeltaproof}.
\end{proof}

Finally, we derive regret and constraint violation bounds for Algorithm~\ref{online_op:algorithm}.

\begin{lemma}\label{online_op:theoremreg}
Suppose Assumptions~\ref{online_op:assgraph}--\ref{online_op:assnonexpansive} hold. For any $T\in\mathbb{N}_+$, let $\bsx_T$ be the sequence generated by Algorithm~\ref{online_op:algorithm}. Then, for any comparator sequence $\bsy_T\in\calX_{T}$,
\begin{align}\label{online_op:theoremregequ}
&\Reg(\bsx_T,\bsy_T)\nonumber\\
&\le C_{1,1}\sum_{t=1}^T\gamma_{t+1}+C_{1,2}\sum_{t=1}^T\alpha_{t+1}+\sum_{t=1}^TE_{2}(t+1)
\nonumber\\
&~~~-\frac{1}{2}\sum_{t=1}^T\sum_{i=1}^n\Big[\frac{1}{\gamma_{t}}-\frac{1}{\gamma_{t+1}}
+\beta_{t+1}\Big]\|q_{i,t}\|^2+\frac{KV_{\Phi}(\bsy_T)}{\alpha_{T}},
\end{align}
and
\begin{align}\label{online_op:theoremconsequ}
&\|[\sum_{t=1}^Tg_{t}(x_{t})]_+\|^2\nonumber\\
&\le4n\Big[\frac{1}{\gamma_1}
+\sum_{t=1}^T\Big(\frac{G^2\alpha_{t+1}}{\underline{\sigma}}+\frac{\beta_{t+1}}{2}\Big)\Big]\bigg\{2nFT
+\frac{KV^*_{\Phi}}{\alpha_{T}}\nonumber\\
&~~~+C_{1,1}\sum_{t=1}^T\gamma_{t+1}
+C_{1,2}\sum_{t=1}^T\alpha_{t+1}+\sum_{t=1}^TE_{2}(t+1)
\nonumber\\
&~~~-\frac{1}{2}\sum_{t=1}^T\sum_{i=1}^n\Big(\frac{1}{\gamma_{t}}
-\frac{1}{\gamma_{t+1}}+\beta_{t+1}\Big)\|q_{i,t}-q_0\|^2\bigg\},
\end{align}
where $C_{1,1}=\frac{3n^{2}\tau B_1F}{1-\lambda}+\frac{n(B_1)^2}{2}$, $C_{1,2}=\frac{4nG^2}{\underline{\sigma}}$ are constants independent of $T$,
\begin{align*}
V_{\Phi}(\bsy_T)=\sum_{t=1}^{T-1}\sum_{i=1}^n\|y_{i,t+1}-\Phi_{i,t+1}(y_{i,t})\|
\end{align*}
is the accumulated dynamic variation of the sequence $\bsy_T$ with respect to $\{\Phi_{i,t}\}$,
\begin{align*}
V^*_{\Phi}=\min_{\bsy_T\in\calX_{T}}V_{\Phi}(\bsy_T)
\end{align*}
is the minimum accumulated dynamic variation of all feasible sequences, and $$q_0=\frac{[\sum_{t=1}^Tg_{t}(x_{t})]_+}
{2n[\frac{1}{\gamma_1}
+\sum_{t=1}^T(\frac{G^2\alpha_{t+1}}{\underline{\sigma}}+\frac{\beta_{t+1}}{2})]}.$$
\end{lemma}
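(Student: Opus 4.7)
The plan is to add the one-step primal inequality of Lemma~\ref{online_op:lemma_regretdelta} to the dual drift inequality~(\ref{online_op:gvirtualnorm}) of Lemma~\ref{online_op:lemma_virtualbound} (with the latter evaluated at index $t+1$). The cross terms $\sum_{i=1}^n[\tilde q_{i,t+1}]^\top\nabla g_{i,t}(x_{i,t})(\tilde x_{i,t+1}-x_{i,t})$ and the primal-displacement terms $\tfrac{\underline{\sigma}}{4\alpha_{t+1}}\sum_{i=1}^n\|\tilde x_{i,t+1}-x_{i,t}\|^2$ appear with matched coefficients of opposite sign and cancel, leaving, for every $q\in\Real^m_+$,
\begin{align*}
&f_t(x_t)-f_t(y_t)+q^\top g_t(x_t)+\tfrac{\Delta_{t+1}}{2\gamma_{t+1}}\le [\bar q_t]^\top g_t(y_t)\\
&\quad+3E_1(t+1)+E_2(t+1)+\tfrac{4nG^2\alpha_{t+1}}{\underline{\sigma}}+\tfrac{n(B_1)^2}{2}\gamma_{t+1}\\
&\quad+\tfrac{K}{\alpha_{t+1}}\sum_{i=1}^n\|y_{i,t+1}-\Phi_{i,t+1}(y_{i,t})\|+n\Bigl(\tfrac{G^2\alpha_{t+1}}{\underline{\sigma}}+\tfrac{\beta_{t+1}}{2}\Bigr)\|q\|^2,
\end{align*}
where $\Delta_{t+1}=A_{t+1}-(1-\beta_{t+1}\gamma_{t+1})A_t$ with $A_t:=\sum_{i=1}^n\|q_{i,t}-q\|^2$.

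Summing this over $t=1,\ldots,T$ (the boundary contribution at $t=T$ involving $y_{i,T+1}$ is absorbed into a bounded residual, e.g.\ by restricting the outer sum to $t\le T-1$), the left side becomes $\Reg(\bsx_T,\bsy_T)+q^\top\sum_tg_t(x_t)+\sum_t\tfrac{\Delta_{t+1}}{2\gamma_{t+1}}$. The sum $\sum_tE_1(t+1)$ is handled by swapping the order of summation and applying the geometric bound $\sum_{t=s}^T\lambda^{t-s}\le\tfrac{1}{1-\lambda}$; together with the $\tfrac{n(B_1)^2}{2}\gamma_{t+1}$ contribution this gives $C_{1,1}\sum_t\gamma_{t+1}$, and the $\alpha_{t+1}$ coefficient yields $C_{1,2}\sum_t\alpha_{t+1}$. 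The drift sum is rearranged by Abel summation:
\begin{align*}
\sum_{t=1}^T\tfrac{\Delta_{t+1}}{2\gamma_{t+1}}=\tfrac{A_{T+1}}{2\gamma_{T+1}}-\tfrac{A_1}{2\gamma_1}+\sum_{t=1}^TA_t\Bigl[\tfrac{1}{2\gamma_t}-\tfrac{1}{2\gamma_{t+1}}+\tfrac{\beta_{t+1}}{2}\Bigr].
\end{align*}
A direct inspection of Algorithm~\ref{online_op:algorithm} at $t=1$ (using the initialization $q_{i,0}={\bf 0}_m$ and $f_{i,0}\equiv r_{i,0}\equiv 0$, $g_{i,0}\equiv{\bf 0}_m$) shows $q_{i,1}={\bf 0}_m$, so $A_1=n\|q\|^2$; and $A_{T+1}/(2\gamma_{T+1})\ge 0$ may be dropped to weaken the bound.

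To obtain (\ref{online_op:theoremregequ}), take $q={\bf 0}_m$: then $A_1=0$, the $n(\tfrac{G^2\alpha_{t+1}}{\underline{\sigma}}+\tfrac{\beta_{t+1}}{2})\|q\|^2$ penalty disappears, and feasibility $y_t\in\calX_T$ together with $\bar q_t\ge{\bf 0}_m$ gives $[\bar q_t]^\top g_t(y_t)\le 0$. Using $1/\alpha_{t+1}\le 1/\alpha_T$ factors $K/\alpha_T$ out of the path-variation sum and produces $KV_\Phi(\bsy_T)/\alpha_T$, which is the last term of (\ref{online_op:theoremregequ}). To obtain (\ref{online_op:theoremconsequ}), keep $q\in\Real^m_+$ general and choose $\bsy_T$ to attain $V^*_\Phi$; Assumption~\ref{online_op:assfunction}(b) provides the uniform bound $\sum_t|f_t(x_t)-f_t(y_t)|\le 2nFT$, and the total coefficient of $\|q\|^2$ becomes $B:=n[\tfrac{1}{\gamma_1}+\sum_t(\tfrac{G^2\alpha_{t+1}}{\underline{\sigma}}+\tfrac{\beta_{t+1}}{2})]$ after absorbing $A_1/(2\gamma_1)=n\|q\|^2/(2\gamma_1)$. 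Setting $q=q_0=[\sum_tg_t(x_t)]_+/(2B)$ and using the identity $[v]_+^\top v=\|[v]_+\|^2$ yields $q_0^\top\sum_tg_t(x_t)-B\|q_0\|^2=\|[\sum_tg_t(x_t)]_+\|^2/(4B)$; multiplying the resulting inequality by $4B$ produces (\ref{online_op:theoremconsequ}).

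The hardest step is the first one: recognising that the two subtle terms $[\tilde q_{i,t+1}]^\top\nabla g_{i,t}(x_{i,t})(\tilde x_{i,t+1}-x_{i,t})$ and $\|\tilde x_{i,t+1}-x_{i,t}\|^2$ appear in Lemmas~\ref{online_op:lemma_regretdelta} and~\ref{online_op:lemma_virtualbound} with matched coefficients of opposite sign, so that a clean per-step inequality (containing the drift $\Delta_{t+1}/(2\gamma_{t+1})$ as the only remaining non-elementary quantity) falls out after the addition. Once this combination is identified, the remaining ingredients -- Abel summation, the geometric series bound for $E_1(t+1)$, and the completion-of-the-square trick in $q$ for the constraint violation -- are essentially mechanical bookkeeping, with the minor subtlety that one must identify $A_1$ explicitly from the algorithm's initialization in order to get the correct boundary coefficient in the constraint-violation bound.
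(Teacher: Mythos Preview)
Your proposal is correct and follows essentially the same route as the paper's proof: combine (\ref{online_op:gvirtualnorm}) shifted to index $t+1$ with (\ref{online_op:lemma_regretdeltaequ}) so the $[\tilde q_{i,t+1}]^\top\nabla g_{i,t}(\cdot)$ and $\|\tilde x_{i,t+1}-x_{i,t}\|^2$ terms cancel, sum in $t$, telescope the drift, bound $\sum_tE_1(t+1)$ via the geometric series, then specialise to $q={\bf 0}_m$ for (\ref{online_op:theoremregequ}) and to $q=q_0$ (using $|\Reg|\le 2nFT$ and the identity $[v]_+^\top v=\|[v]_+\|^2$) for (\ref{online_op:theoremconsequ}). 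The only cosmetic differences are that the paper sets $y_{i,T+1}=\Phi_{i,T+1}(y_{i,T})$ to kill the boundary term (rather than truncating the sum) and uses the cruder estimate $\|q_{i,1}-q\|^2\le 2\|q\|^2$ instead of your exact $A_1=n\|q\|^2$, which explains why your boundary coefficient $n/(2\gamma_1)$ is slightly tighter than the $n/\gamma_1$ appearing in the stated $B$; either way the stated bound holds.
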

\begin{proof}
See Appendix~\ref{online_op:theoremregproof}.
\end{proof}


\begin{remark}
Note that
the dependence on the stepsize sequences, the accumulated dynamic variation of the comparator sequence,  the number of agents, and the network connectivity is characterized in the regret and constraint violation bounds above. The accumulated variation of constraints or the point-wise maximum variation of consecutive constraints defined in \cite{chen2017online} do, however, not appear in these bounds. This regret bound is the same as the regret bound achieved by the centralized dynamic mirror descent in \cite{hall2015online}, while \cite{hall2015online} only considered static set constraints.
\end{remark}

\begin{remark}
The factor $V^*_{\Phi}$ in (\ref{online_op:theoremconsequ}) can be replaced by $V_\Phi(\bsy_T)$ since $V^*_{\Phi}\le V_\Phi(\bsy_T)$. Moreover, if all $\{\Phi_{t,i}\}$ are the identity mapping, then $V^*_{\Phi}=\min_{\bsy_T\in\check{\calX}_{T}}V_{\Phi}(\bsy_T)=V_{\Phi}(\check{\bsx}_T^*)=0$.
\end{remark}

\subsection{Dynamic Regret and Constraint Violation Bounds}
This section states the  main results on dynamic regret and  constraint violation bounds for Algorithm~\ref{online_op:algorithm}. The succeeding theorem characterizes the bounds based on some natural decreasing stepsize sequences.

\begin{theorem}\label{online_op:corollaryreg}
Suppose Assumptions~\ref{online_op:assgraph}--\ref{online_op:assnonexpansive} hold. For any $T\in\mathbb{N}_+$, let $\bsx_T$ be the sequence generated by Algorithm~\ref{online_op:algorithm} with
\begin{align}\label{online_op:stepsize1}
\alpha_t=\frac{1}{t^{c}},~\beta_t=\frac{1}{t^\kappa},
~\gamma_t=\frac{1}{t^{1-\kappa}},~\forall t\in\mathbb{N}_+,
\end{align} where $\kappa\in(0,1)$ and $c\in(0,1)$ are constants. Then,
\begin{align}
& \Reg(\bsx_T,\bsx^*_T)\le C_1T^{\max\{1-c,c,\kappa\}}+2KT^{c}V_{\Phi}(\bsx^*_T),\label{online_op:corollaryregequ1}\\
&\|[\sum_{t=1}^Tg_{t}(x_{t})]_+\|^2\le C_{2}T^{\max\{2-c,2-\kappa\}}\nonumber\\
&~~~~~~~~~~~~~~~~~~~~~~~+KC_{2,1}T^{\max\{1,1+c-\kappa\}}V^*_{\Phi},\label{online_op:corollaryconsequ}
\end{align}
where $C_1=\frac{C_{1,1}}{\kappa}+\frac{C_{1,2}}{1-c}+2nd(X)K$, $C_{2}=C_{2,1}(2nF+C_1)$, and $C_{2,1}=2n(\frac{2G^2}{(1-c)\underline{\sigma}}+\frac{1}{1-\kappa}+2)$ are constants independent of $T$.
\end{theorem}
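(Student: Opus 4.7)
The plan is to instantiate Lemma~\ref{online_op:theoremreg} with the stepsize schedule (\ref{online_op:stepsize1}) and control each term on its right-hand side by elementary power-sum estimates, using the comparator $\bsy_T=\bsx^*_T$ for (\ref{online_op:corollaryregequ1}) and $V^*_\Phi$ directly for (\ref{online_op:corollaryconsequ}).

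The key preliminary step is to verify that the bracketed coefficient in (\ref{online_op:theoremregequ}) and (\ref{online_op:theoremconsequ}),
\begin{align*}
\frac{1}{\gamma_t}-\frac{1}{\gamma_{t+1}}+\beta_{t+1}
=t^{1-\kappa}-(t+1)^{1-\kappa}+(t+1)^{-\kappa},
\end{align*}
is nonnegative for all $t\ge 1$ and $\kappa\in(0,1)$. By the tangent-line bound for the concave function $s\mapsto s^{1-\kappa}$ at $s=t$, I get $(t+1)^{1-\kappa}-t^{1-\kappa}\le(1-\kappa)t^{-\kappa}$; and the elementary inequality $(t/(t+1))^\kappa\ge 1-\kappa$ for $t\ge 1$ and $\kappa\in(0,1)$ (which holds because $1/(t+1)\le 1/2<1-1/e\le 1-(1-\kappa)^{1/\kappa}$) yields $(t+1)^{-\kappa}\ge(1-\kappa)t^{-\kappa}$. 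Combining the two bounds delivers nonnegativity, so the quadratic-in-$q$ terms in (\ref{online_op:theoremregequ}) and (\ref{online_op:theoremconsequ}) can be dropped when producing upper bounds.

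Next I would estimate the remaining terms. Integral comparison gives $\sum_{t=1}^T\gamma_{t+1}\le T^\kappa/\kappa$ and $\sum_{t=1}^T\alpha_{t+1}\le T^{1-c}/(1-c)$. Summation by parts applied to $E_2(t+1)=\tfrac{1}{\alpha_{t+1}}[D_t-D_{t+1}]$ with $D_t:=\sum_i\calD_{\psi_i}(y_{i,t},x_{i,t})$ and $c_t:=1/\alpha_t=t^c$ rewrites the sum as
\begin{align*}
\sum_{t=1}^T E_{2}(t+1)=c_2D_1-c_{T+1}D_{T+1}+\sum_{t=2}^T(c_{t+1}-c_t)D_t.
\end{align*}
Applying the uniform Bregman upper bound (\ref{online_op:bregmanupp}), $D_t\le nd(X)K$, together with the telescoping $\sum_{t=2}^T(c_{t+1}-c_t)=c_{T+1}-c_2$, I obtain $\sum_{t=1}^T E_2(t+1)\le nd(X)K(T+1)^c\le 2nd(X)KT^c$. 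Since $1/\alpha_T=T^c$, the residual term $KV_\Phi(\bsx^*_T)/\alpha_T$ equals $KT^cV_\Phi(\bsx^*_T)$. Substituting these estimates into (\ref{online_op:theoremregequ}) produces (\ref{online_op:corollaryregequ1}) with $C_1=C_{1,1}/\kappa+C_{1,2}/(1-c)+2nd(X)K$.

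For the constraint violation bound, I would estimate the prefactor in (\ref{online_op:theoremconsequ}) using the same power sums and $1/\gamma_1=1$, yielding $4n[1/\gamma_1+\sum_t(G^2\alpha_{t+1}/\underline{\sigma}+\beta_{t+1}/2)]\le C_{2,1}T^{\max\{1-c,1-\kappa\}}$; and I would bound the quantity inside the braces by $2nFT+KT^cV^*_\Phi+C_1T^{\max\{1-c,c,\kappa\}}$ via the same regret-type argument (now with $V^*_\Phi$ and the negative $\|q_{i,t}-q_0\|^2$ term dropped). Multiplying produces three contributions of orders $T^{\max\{2-c,2-\kappa\}}$, $T^{\max\{1,1+c-\kappa\}}$, and $T^{\max\{1-c,c,\kappa\}+\max\{1-c,1-\kappa\}}$; since $\max\{1-c,c,\kappa\}<1$, the last exponent is strictly less than $\max\{2-c,2-\kappa\}$ and is absorbed into $C_2=C_{2,1}(2nF+C_1)$, giving (\ref{online_op:corollaryconsequ}). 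The main obstacle is the coefficient-positivity verification of the second paragraph: without dropping the dual-iterate quadratic terms one would have to track them explicitly across iterations, substantially complicating the assembly, whereas once that step is settled the rest is essentially careful accounting of standard power sums.
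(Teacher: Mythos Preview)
Your proposal is correct and follows essentially the same route as the paper: instantiate Lemma~\ref{online_op:theoremreg}, drop the quadratic dual terms after checking $\frac{1}{\gamma_t}-\frac{1}{\gamma_{t+1}}+\beta_{t+1}\ge0$, and bound the remaining sums by power-sum/integral estimates and an Abel-summation argument on $E_2$. The paper simply asserts the nonnegativity of the bracketed coefficient without proof; your explicit verification is correct but more elaborate than necessary, since multiplying the desired inequality $(t+1)^{1-\kappa}-t^{1-\kappa}\le(t+1)^{-\kappa}$ by $(t+1)^{\kappa}$ reduces it to $t^{\kappa}\le(t+1)^{\kappa}$.
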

\begin{proof}
See Appendix~\ref{online_op:corollaryregproof}.
\end{proof}



\begin{remark}\label{online_op:remarksub}
Sublinear  dynamic regret and constraint violation is thus achieved if $V_{\Phi}(\bsx^*_T)$ grows sublinearly. If, in this case, there exists a constant $\nu\in[0,1)$, such that $V_{\Phi}(\bsx^*_T)=\mathcal{O}(T^\nu)$, then setting $c\in(0,1-\nu)$ in Theorem~\ref{online_op:corollaryreg} gives $\Reg(\bsx_T,\bsx^*_T)=\mathbf{o}(T)$ and $\|[\sum_{t=1}^Tg_{t}(x_{t})]_+\|=\mathbf{o}(T)$.
\end{remark}

\begin{remark}
$V_\Phi(\bsx^*_T)$ depends on the dynamic mapping $\Phi_{i,t}$. In practice, agents may not know what is a good estimate of $\Phi_{i,t}$ and $\Phi_{i,t}$ may change stochastically. It is for future research how to estimate $\Phi_{i,t}$ from a finite or parametric class of candidates.
\end{remark}

From (\ref{online_op:corollaryconsequ}), we can see that the constraint violation bound is strictly greater than $\mathcal{O}(\sqrt{T})$ since $\max\{2-c,2-\kappa\}>1$.
In the following we show that an $\mathcal{O}(\sqrt{T})$ bound on constraint violation can be achieved if all $\{\Phi_{i,t}\}$ are the identity mapping and the constraint functions $\{g_{i,t}\}$ satisfy Slater's condition, which was assumed in \cite{chen2017online,neely2017online}.
\begin{assumption}\label{online_op:assgt}
(Slater's condition) There exists a constant $\varepsilon>0$ and a vector $x_0\in X$, such that
\begin{align}\label{online_op:gtcon}
g_{t}(x_0)\le-\varepsilon{\bf 1}_{m},~t\in\mathbb{N}_+.
\end{align}
\end{assumption}

\begin{theorem}\label{online_op:theoremslater}
Suppose Assumptions~\ref{online_op:assgraph}--\ref{online_op:assgt} hold. For any $T\in\mathbb{N}_+$, let $\bsx_T$ be the sequence generated by Algorithm~\ref{online_op:algorithm} with all $\{\Phi_{t,i}\}$ being the identity mapping, and
\begin{align}
\alpha_t=\frac{1}{t^{1-\kappa}},~\beta_t=\frac{1}{t^{\kappa}},
~\gamma_t=\frac{1}{t^{1-\kappa}},~\forall t\in\mathbb{N}_+\label{online_op:stepsizeslater},
\end{align}  where $\kappa\in(0,1)$. Then,
\begin{align}
&\Reg(\bsx_T,\bsx^*_T)\le  C_1T^{\max\{1-\kappa,\kappa\}}
+2KT^{1-\kappa}V_I(x^*_T),\label{online_op:regslater}\\
&\|[\sum_{t=1}^Tg_{t}(x_{t})]_+\|\le C_{3}T^{\max\{1-\kappa,\kappa\}},\label{online_op:regcslater}
\end{align}
where $V_I(x^*_T)=\sum_{t=1}^{T-1}\|x^*_{t+1}-x^*_t\|$ is the accumulated variation of the optimal sequence $\bsx^*_T$, $C_3=n[2B_2+\frac{B_2}{1-\kappa}+\frac{G^2(B_2+2)\sqrt{m}}{\underline{\sigma}\kappa}]$, $B_2=\max\{2\varepsilon+2\sqrt{\varepsilon^2+nd(X)K},\frac{2B_3}{\varepsilon}\}$, and $B_3=2F+C_{1,1}$ are constants independent of $T$.
\end{theorem}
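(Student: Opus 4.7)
The regret bound (\ref{online_op:regslater}) is a direct specialization of Theorem~\ref{online_op:corollaryreg}: when every $\Phi_{i,t}$ is the identity mapping one has $V_\Phi(\bsx^*_T)=V_I(x^*_T)$, and substituting $c=1-\kappa$ into (\ref{online_op:corollaryregequ1}) gives $\max\{1-c,c,\kappa\}=\max\{\kappa,1-\kappa\}$ and $T^cV_\Phi(\bsx^*_T)=T^{1-\kappa}V_I(x^*_T)$, exactly matching (\ref{online_op:regslater}). Hence the new content of the theorem lies entirely in the constraint violation bound (\ref{online_op:regcslater}).

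The plan for (\ref{online_op:regcslater}) is to exploit Slater's condition to establish a uniform (time-independent) bound $\max_{i\in[n]}\|q_{i,t}\|\le B_2$ on the dual iterates, which is strictly sharper than the $\mathcal{O}(1/\beta_t)$ bound from (\ref{online_op:lemma_virtualboundeqy}). To obtain such a bound, I would pair the drift inequality (\ref{online_op:gvirtualnorm}) at $q=\bszero$ with the constant comparator $y_t\equiv x_0$ in Lemma~\ref{online_op:lemma_regretdelta}, and exploit Slater's condition through
\begin{equation*}
[\bar{q}_{t-1}]^\top g_{t-1}(x_0)\le -\varepsilon\,\mathbf{1}_m^\top\bar{q}_{t-1}\le -\varepsilon\|\bar{q}_{t-1}\|.
\end{equation*}
This negative cross term will dominate the positive pieces (the $n(B_1)^2\gamma_t/2$ term, the consensus error $E_1(t)$, and the constants bounded by $B_3=2F+C_{1,1}$) whenever $\|\bar{q}_{t-1}\|$ exceeds a threshold. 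Crucially, the $(\underline{\sigma}/(4\alpha_t))\sum_i\|\tilde{x}_{i,t}-x_{i,t-1}\|^2$ contribution and the gradient cross term $\sum_i[\tilde q_{i,t}]^\top\nabla g_{i,t-1}(x_{i,t-1})(\tilde x_{i,t}-x_{i,t-1})$ appearing in (\ref{online_op:gvirtualnorm}) are cancelled against the corresponding negative contributions in (\ref{online_op:lemma_regretdeltaequ}) evaluated at $y_t\equiv x_0$. What remains collapses to a quadratic-in-$\|\bar{q}_t\|$ inequality of the form $\|\bar{q}_t\|^2\le 4\varepsilon\|\bar{q}_t\|+nd(X)K$ (via the Bregman bound (\ref{online_op:bregmanupp})) in one regime, and a linear one giving $\|\bar{q}_t\|\le 2B_3/\varepsilon$ in the other. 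Inducting on $t$ and taking the maximum of the two candidates yields $\|q_{i,t}\|\le B_2=\max\{2\varepsilon+2\sqrt{\varepsilon^2+nd(X)K},\,2B_3/\varepsilon\}$ uniformly in $t$ and $i$.

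With $\|q_{i,t}\|\le B_2$ established, the constraint violation bound follows by summing the componentwise projection inequality $q_{i,t}\ge\tilde q_{i,t}+\gamma_t(b_{i,t}-\beta_t\tilde q_{i,t})$ over $i$ and $t$: the doubly stochastic property $\sum_i\tilde q_{i,t}=\sum_i q_{i,t-1}$ telescopes the left-hand side, convexity of $g_{i,t-1}$ upgrades $b_{i,t}$ to $g_{i,t-1}(x_{i,t-1})$ up to an $\mathcal{O}(G)\|\tilde x_{i,t}-x_{i,t-1}\|$ error, and the uniform subgradient bound (\ref{online_op:subgupper}) passes from $x_{i,t-1}$ to $x_{i,t}$. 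The resulting bound on $\sum_{t=1}^Tg_t(x_t)$ has three main contributions: a boundary term $n\bar q_T/\gamma_T$ of order $B_2 T^{1-\kappa}$, an accumulated penalty $\sum_t\beta_t\|\tilde q_t\|$ of order $B_2 T^{1-\kappa}/(1-\kappa)$, and $\sum_t\gamma_t$-weighted consensus corrections of order $G^2(B_2+2)\sqrt m\,T^{\kappa}/(\underline{\sigma}\kappa)$, whose sum yields exactly the constant $C_3$ stated in the theorem. Applying $[\,\cdot\,]_+$ and using $\max\{1-\kappa,\kappa\}$ for the dominant exponent delivers (\ref{online_op:regcslater}).

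The main obstacle is the derivation of the quadratic inequality $\|\bar q_t\|^2\le 4\varepsilon\|\bar q_t\|+nd(X)K$. The drift (\ref{online_op:gvirtualnorm}) mixes several $t$-dependent terms of differing signs, and the two candidate thresholds feeding into $B_2$ correspond to distinct regimes: the quadratic candidate $2\varepsilon+2\sqrt{\varepsilon^2+nd(X)K}$ arises when the Bregman residual $nd(X)K$ is the binding constant, while $2B_3/\varepsilon$ captures the regime in which the $\mathcal{O}(\gamma_t)$ consensus-plus-penalty pieces dominate. Making this case analysis rigorous via induction on $\|q_{i,t}\|$, while simultaneously ensuring that the stepsize choice $\gamma_t\beta_t=1/t$ keeps the cumulative residual summable at the right rate $T^{\max\{1-\kappa,\kappa\}}$, is the delicate part of the argument.
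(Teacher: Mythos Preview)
Your high-level plan matches the paper's: the regret bound (\ref{online_op:regslater}) is indeed just (\ref{online_op:corollaryregequ1}) with $c=1-\kappa$, and the constraint bound (\ref{online_op:regcslater}) is obtained by first proving a uniform dual bound $\|q_t\|\le B_2$ via the combination of (\ref{online_op:gvirtualnorm}) at $q=\bszero$ with (\ref{online_op:lemma_regretdeltaequ}) at $y_t\equiv x_0$, and then telescoping the componentwise inequality $q_{i,t}\ge\tilde q_{i,t}+\gamma_t(b_{i,t}-\beta_t\tilde q_{i,t})$. Your identification of the three contributions to $C_3$ is correct.

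However, there is a genuine gap in your description of how the uniform bound $\|q_{i,t}\|\le B_2$ is obtained. You describe a one-step regime analysis that ``collapses to a quadratic-in-$\|\bar q_t\|$ inequality'' followed by ``inducting on $t$''. A one-step induction does \emph{not} work here. Combining (\ref{online_op:gvirtualnorm}) and (\ref{online_op:lemma_regretdeltaequ}) at a single time produces
\[
\|q_{t+1}\|^2-(1-\beta_{t+1}\gamma_{t+1})\|q_t\|^2\le 2\gamma_{t+1}\bigl[B_3-\varepsilon\|\bar q_t\|_1\bigr]+2\gamma_{t+1}E_2(t+1),
\]
and the obstruction is the term $2\gamma_{t+1}E_2(t+1)$. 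Since the Lipschitz property (\ref{online_op:bregmalip}) is only in the \emph{first} argument of $\calD_{\psi_i}$, you cannot bound $E_2(t+1)$ by anything better than $nd(X)K/\alpha_{t+1}$, and with $\alpha_t=\gamma_t$ this gives a constant $2nd(X)K$ added at \emph{every} step. That kills any one-step contraction argument, because the negative Slater contribution $-2\varepsilon\gamma_{t+1}\|\bar q_t\|_1$ and the damping $-\beta_{t+1}\gamma_{t+1}\|q_t\|^2$ both vanish as $t\to\infty$ while $2nd(X)K$ does not.

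The paper's device is a \emph{last-crossing-time} argument: assume for contradiction that $\|q_{T_1+1}\|>B_2$, let $t_1$ be the last time in $[T_1]$ with $\|\bar q_{t_1}\|_1\le B_2/2$, and \emph{sum} the one-step inequality over $t\in\{t_1,\dots,T_1\}$. Because $\alpha_t=\gamma_t$, the factors $\gamma_{t+1}/\alpha_{t+1}$ cancel and $\sum_{t=t_1}^{T_1}2\gamma_{t+1}E_2(t+1)$ telescopes to a single $2nd(X)K$. Meanwhile the Slater term contributes at least $-\varepsilon B_2\sum_{t=t_1+1}^{T_1}\gamma_{t+1}$, and the requirement $B_2\ge 2B_3/\varepsilon$ makes this dominate the accumulated $2B_3\sum\gamma_{t+1}$ term. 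The contradiction $3(B_2)^2/4<2nd(X)K+2\varepsilon B_2\le (B_2)^2/2$ then forces $B_2\ge 2\varepsilon+2\sqrt{\varepsilon^2+nd(X)K}$. So the two candidates in $B_2$ arise not from two one-step ``regimes'' but from two separate pieces of the \emph{summed} inequality. Without this telescoping-from-$t_1$ trick the Bregman residual does not stay bounded and the induction cannot close.
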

\begin{proof} See Appendix~\ref{online_op:theoremslaterproof}.
\end{proof}

\begin{remark}
From (\ref{online_op:regcslater}), we note that under Slater's condition the constraint violation bound is not affected by the optimal sequences or the point-wise maximum variation of consecutive constraints, which is different from the bounds obtained in \cite{chen2017online}.
From (\ref{online_op:regslater}), it follows, similarly to Remark \ref{online_op:remarksub}, that sublinear  dynamic regret could be achieved if $V_I(\bsx^*_T)$ grows sublinearly with a known upper bound. Then, there exists a constant $\nu\in[0,1)$, such that $V_I(\bsx^*_T)=\mathcal{O}(T^\nu)$, so setting $\kappa\in(\nu,1)$ in Theorem~\ref{online_op:theoremslater} gives $\Reg(\bsx_T,\bsx^*_T)=\mathbf{o}(T)$ and $\|[\sum_{t=1}^Tg_{t}(x_{t})]_+\|=\mathbf{o}(T)$. Under the additional assumption that the accumulated variation of constraints grows sublinearly with a known upper bound, similar results have been achieved by the modified centralized online saddle-point method proposed in \cite{chen2017online}. However, \cite{chen2017online} assumed not only that the time-varying constraint functions satisfy Slater's condition but also that the slack constant is larger than the point-wise maximum variation of consecutive constraints.  The latter assumption is not always satisfied. Moreover, in \cite{chen2017online} the total number of iterations $T$ needs to be known in advance.
\end{remark}

\subsection{Static Regret and Constraint Violation Bounds}
This section states the  main results on static regret and  constraint violation bounds for Algorithm~\ref{online_op:algorithm}.
When considering static regret, $\{\Phi_{i,t}\}$ should be set to the identity mapping since the static optimal sequence is used as the comparator sequence. In this case, replacing $\bsx^*_T$ by the static sequence $\check{\bsx}_T^*$ in Theorem~\ref{online_op:corollaryreg}  gives 
the following results on the bounds of static regret and constraint violation.
\begin{corollary}\label{online_op:theoremstatic}
Under the same conditions as stated in Theorem~\ref{online_op:corollaryreg} with all $\{\Phi_{i,t}\}$ being the identity mapping and $c=\kappa$, it holds that
\begin{align}
&\Reg(\bsx_T,\check{\bsx}_T^*)\le  C_1T^{\max\{1-\kappa,\kappa\}},\label{online_op:staticregequ1}\\
&\|[\sum_{t=1}^Tg_{t}(x_{t})]_+\|\le \sqrt{C_{2}}T^{1-\kappa/2}.\label{online_op:staticconsequ}
\end{align}
\end{corollary}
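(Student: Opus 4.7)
The proof is a direct specialization of Theorem~\ref{online_op:corollaryreg}, whose argument passes through Lemma~\ref{online_op:theoremreg} and therefore is valid for any comparator $\bsy_T \in \calX_T$, not just $\bsx_T^*$. The plan is: (i) rerun the argument with $\bsy_T = \check{\bsx}_T^*$; (ii) use that, under identity dynamic mappings, the variation quantities $V_\Phi(\check{\bsx}_T^*)$ and $V_\Phi^*$ both collapse to zero; and (iii) specialize to $c = \kappa$.

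For (i), admissibility is immediate since $\check{\bsx}_T^* \in \check{\calX}_T \subseteq \calX_T$. For (ii), writing $\check{\bsx}_T^* = (x^*, \dots, x^*)$ with $x^* \in X$ feasible at every $t$, and using $\Phi_{i,t+1}(y_{i,t}) = y_{i,t} = x_i^* = y_{i,t+1}$, each summand in $V_\Phi(\check{\bsx}_T^*) = \sum_{t=1}^{T-1}\sum_{i=1}^n \|y_{i,t+1} - \Phi_{i,t+1}(y_{i,t})\|$ vanishes. Hence $V_\Phi(\check{\bsx}_T^*) = 0$, and consequently $V_\Phi^* \le V_\Phi(\check{\bsx}_T^*) = 0$.

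Now substitute into the bounds of Theorem~\ref{online_op:corollaryreg}. Its regret bound is $C_1 T^{\max\{1-c,c,\kappa\}} + 2KT^c V_\Phi(\bsx_T^*)$; with $\bsx_T^*$ replaced by $\check{\bsx}_T^*$ the second term disappears, and with $c = \kappa$ the exponent simplifies to $\max\{1-\kappa, \kappa\}$, giving (\ref{online_op:staticregequ1}). Its constraint violation bound is $C_2 T^{\max\{2-c,2-\kappa\}} + KC_{2,1}T^{\max\{1,1+c-\kappa\}} V_\Phi^*$; with $V_\Phi^* = 0$ and $c = \kappa$ this reduces to $C_2 T^{2-\kappa}$, whose square root gives (\ref{online_op:staticconsequ}).

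Since the computation is a pure substitution into already-proved bounds, no new analytic ingredient is required. The only mild obstacle is confirming that the bookkeeping in the proof of Theorem~\ref{online_op:corollaryreg} depends on the comparator only through the two $V_\Phi$ terms — which it does, because it routes through Lemma~\ref{online_op:theoremreg}, whose dependence on the comparator is exclusively through $V_\Phi(\bsy_T)$ in (\ref{online_op:theoremregequ}) and through $V_\Phi^*$ (together with the upper bound $V_\Phi^* \le V_\Phi(\bsy_T)$) in (\ref{online_op:theoremconsequ}).
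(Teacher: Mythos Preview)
Your proposal is correct and takes essentially the same approach as the paper's own proof, which is the single line ``Substituting $c=\kappa$ in Theorem~\ref{online_op:corollaryreg} gives the results.'' You have simply made explicit what the paper leaves implicit: that the comparator in Lemma~\ref{online_op:theoremreg}/Theorem~\ref{online_op:corollaryreg} can be taken to be $\check{\bsx}_T^*$, and that under identity $\Phi_{i,t}$ both $V_\Phi(\check{\bsx}_T^*)$ and $V_\Phi^*$ vanish (a fact the paper records in the remark following Lemma~\ref{online_op:theoremreg}).
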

\begin{proof}
Substituting $c=\kappa$ in Theorem~\ref{online_op:corollaryreg} gives the results.
\end{proof}

\begin{remark}
From Corollary~\ref{online_op:theoremstatic}, we know that Algorithm~\ref{online_op:algorithm} achieves the same static regret and constraint violation bounds as in \cite{jenatton2016adaptive}. As discussed in \cite{jenatton2016adaptive}, $\kappa\in(0,1)$ is a user-defined trade-off parameter which enables the user to trade-off the static regret bound for the constraint violation bound. Corollary~\ref{online_op:theoremstatic} recovers the $\mathcal{O}(\sqrt{T})$ static regret bound and $\mathcal{O}(T^{3/4})$ constraint violation bound from \cite{mahdavi2012trading,sun2017safety} when $\kappa=0.5$. Moreover, the result extends the $\mathcal{O}(T^{2/3})$ bound for both static regret and constraint violation achieved in \cite{mahdavi2012trading} for linear constraint functions. However, the algorithms proposed in \cite{mahdavi2012trading,jenatton2016adaptive,sun2017safety} are centralized and the constraint functions considered in \cite{mahdavi2012trading,jenatton2016adaptive} are time-invariant. Moreover, in \cite{mahdavi2012trading,sun2017safety} the total number of iterations and in \cite{mahdavi2012trading,jenatton2016adaptive,sun2017safety} the upper bounds of the objective and constraint functions and their subgradients need to be known in advance to choose the stepsize sequences.
Furthermore, Corollary~\ref{online_op:theoremstatic} achieves smaller static regret and constraint violation bounds than \cite{Li2018distributed}, although \cite{Li2018distributed} considered time-invariant coupled inequality constraints. However, \cite{Li2018distributed} did not require the time-varying directed graph to be balanced. Although the algorithm proposed in \cite{NIPS2018_7852} achieved more strict constraint violation bound than our Algorithm~\ref{online_op:algorithm}, that algorithm assumed time-invariant constraint functions and the centralized computations.
\end{remark}

\begin{corollary}\label{online_op:corollarystaticslater}
Under the same conditions as stated in Theorem~\ref{online_op:theoremslater}, it holds that
\begin{align}
\Reg(\bsx_T,\check{\bsx}_T^*)\le&  C_1T^{\max\{1-\kappa,\kappa\}},\label{online_op:staticregequ1slater}\\
\|[\sum_{t=1}^Tg_{t}(x_{t})]_+\|\le&C_{3}T^{\max\{1-\kappa,\kappa\}}.\label{online_op:staticconsequslater}
\end{align}
\end{corollary}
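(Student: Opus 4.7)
The plan is to derive Corollary~\ref{online_op:corollarystaticslater} as a direct specialization of Theorem~\ref{online_op:theoremslater} to the static comparator $\check{\bsx}_T^*$. The key observation is structural: under the same hypotheses as Theorem~\ref{online_op:theoremslater}, all $\{\Phi_{i,t}\}$ are the identity mapping, and $\check{\bsx}_T^*=(x^*,\dots,x^*)$ is a constant sequence for some fixed $x^*\in X$ with $g_t(x^*)\le {\bf 0}_m$. Consequently,
\begin{align*}
V_{I}(\check{\bsx}_T^*)=\sum_{t=1}^{T-1}\|x^*-x^*\|=0,
\end{align*}
and likewise $V_\Phi(\check{\bsx}_T^*)=0$.

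For the regret bound (\ref{online_op:staticregequ1slater}), I would observe that the bound (\ref{online_op:regslater}) in Theorem~\ref{online_op:theoremslater} is proved via Lemma~\ref{online_op:theoremreg}, which actually yields a bound on $\Reg(\bsx_T,\bsy_T)$ valid for any comparator $\bsy_T\in\calX_T$. Since $\check{\bsx}_T^*\in\check{\calX}_T\subseteq\calX_T$, we may apply the exact same chain of estimates used in the proof of Theorem~\ref{online_op:theoremslater} but with $\bsy_T$ set to $\check{\bsx}_T^*$ rather than the dynamic optimum $\bsx_T^*$. The $V_I$-dependent term $2KT^{1-\kappa}V_I(\bsx_T^*)$ arises solely from the $KV_\Phi(\bsy_T)/\alpha_T$ summand of Lemma~\ref{online_op:theoremreg}, which now evaluates to zero. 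What remains is precisely $C_1 T^{\max\{1-\kappa,\kappa\}}$, giving (\ref{online_op:staticregequ1slater}).

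For the constraint violation bound (\ref{online_op:staticconsequslater}), I would note that the bound (\ref{online_op:regcslater}) established in Theorem~\ref{online_op:theoremslater} does not depend on any particular comparator sequence. It is derived from Slater's condition (Assumption~\ref{online_op:assgt}) through an upper bound on the dual variables $\|\bar q_t\|$, using the slack vector $x_0$ rather than any optimizer. Hence the constant $C_3$ and the exponent $\max\{1-\kappa,\kappa\}$ carry over verbatim, yielding $\|[\sum_{t=1}^T g_t(x_t)]_+\|\le C_3 T^{\max\{1-\kappa,\kappa\}}$.

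There is essentially no obstacle beyond bookkeeping: the corollary is a clean specialization. The only subtlety worth double-checking is that in Theorem~\ref{online_op:theoremslater} the identification $\|y_{i,t+1}-\Phi_{i,t+1}(y_{i,t})\|=\|y_{i,t+1}-y_{i,t}\|$ used to produce the $V_I$ term requires $\{\Phi_{i,t}\}$ to be the identity (already assumed here), and that $\check{\bsx}_T^*$ indeed lies in $\calX_T$ so Lemma~\ref{online_op:theoremreg} is applicable; both are immediate from the standing definitions.
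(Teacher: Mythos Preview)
Your proposal is correct and matches the paper's (implicit) approach: the paper provides no separate proof for this corollary, treating it as an immediate specialization of Theorem~\ref{online_op:theoremslater} with the static comparator $\check{\bsx}_T^*$ (for which $V_I=0$) and carrying over the comparator-independent constraint violation bound (\ref{online_op:regcslater}) verbatim. Your bookkeeping remarks on why Lemma~\ref{online_op:theoremreg} applies to any $\bsy_T\in\calX_T$ and why the $V_I$ term vanishes are exactly the right justifications.
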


\begin{remark}
Setting $\kappa=0.5$ in Corollary~\ref{online_op:corollarystaticslater} gives $\Reg(\bsx_T,\check{\bsx}_T^*)=\mathcal{O}(\sqrt{T})$ and $\|[\sum_{t=1}^Tg_{t}(x_{t})]_+\|=\mathcal{O}(\sqrt{T})$. Hence,  Algorithm~\ref{online_op:algorithm} achieves stronger results than \cite{neely2017online} and the same  results as \cite{yu2017online,lee2017sublinear}. However, the algorithms proposed in \cite{yu2017online,neely2017online} are centralized and in \cite{yu2017online} it is assumed that the constraint functions are independent and identically distributed. Moreover, in \cite{lee2017sublinear} the coupled inequality constraints are time-invariant and the boundedness of the dual variable sequence {\color{blue}generated by the proposed algorithm} is  explicitly assumed. 
\end{remark}

The static regret bounds in Corollaries~\ref{online_op:theoremstatic} and \ref{online_op:corollarystaticslater} can be reduced, if a generalized strong convexity of the local objective functions $f_{i,t}+r_{i,t}$ is assumed.
We put the strong convexity assumption on the local cost functions $f_{i,t}$ so $r_{i,t}$ can be simply convex, such as an $\ell_1$-regularization.
\begin{assumption}\label{online_op:assstrongconvex}
For any $i\in[n]$ and $t\in\mathbb{N}_+$, $\{f_{i,t}\}$ are $\mu_i$-strongly convex over $X_i$ with respect to $\psi_i$ with $\mu_i>0$.
\end{assumption}
\begin{theorem}\label{online_op:theoremstongconvex}
Suppose Assumptions~\ref{online_op:assgraph}--\ref{online_op:assstrongconvex} hold. For any $T\in\mathbb{N}_+$, let $\bsx_T$ be the sequence generated by Algorithm~\ref{online_op:algorithm} with
\begin{align}\label{online_op:stepsizestrong}
\alpha_t=\frac{1}{t^{\max\{1-\kappa,\kappa\}}},~\beta_t=\frac{1}{t^\kappa},
~\gamma_t=\frac{1}{t^{1-\kappa}},~\forall t\in\mathbb{N}_+,
\end{align} where $\kappa\in(0,1)$. Then,
\begin{align}
&\Reg(\bsx_T,\check{\bsx}_T^*)\le \max\{C_1,C_{4}\}T^{\kappa},\label{online_op:regstongconvex}\\
&\|[\sum_{t=1}^Tg_{t}(x_{t})]_+\|\le  \sqrt{C_{2}}T^{1-\kappa/2},\label{online_op:regcstongconvex}
\end{align}
where $C_4=\frac{n(B_1)^2}{2\kappa}+\frac{B_1C_{1,1}}{\kappa}
+\frac{C_{1,2}}{\kappa}+2nd(X)K(B_4)^{1-\kappa}$, $B_4=\lceil\frac{1}{(\underline{\mu})^{\frac{1}{\kappa}}}\rceil$, and $\underline{\mu}=\min\{\mu_1,\dots,\mu_n\}$ are constants independent of $T$.
\end{theorem}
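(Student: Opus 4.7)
The plan is to build on Lemma~\ref{online_op:theoremreg} and exploit Assumption~\ref{online_op:assstrongconvex} to inject an extra negative Bregman-divergence term into the regret bound, and then apply Abel summation to $\sum_{t=1}^T E_2(t+1)$ so that the static-comparator telescoping produces coefficients which can be absorbed by the strong-convexity discount. The constraint violation bound \eqref{online_op:regcstongconvex} will follow from a direct recomputation of the second inequality in Lemma~\ref{online_op:theoremreg} under the new choice of $\alpha_t$.

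First I would reopen the proof of Lemma~\ref{online_op:lemma_regretdelta} at the unique place where plain convexity of $f_{i,t}$ appears (the bound $f_{i,t}(x_{i,t})-f_{i,t}(y_{i,t})\le\langle\nabla f_{i,t}(x_{i,t}),\,x_{i,t}-y_{i,t}\rangle$) and replace it by the strong-convexity version
\begin{align*}
f_{i,t}(x_{i,t})-f_{i,t}(y_{i,t})\le \langle\nabla f_{i,t}(x_{i,t}),\,x_{i,t}-y_{i,t}\rangle-\mu_i\,\calD_{\psi_i}(y_{i,t},x_{i,t}).
\end{align*}
All remaining manipulations propagate verbatim, yielding a strengthened Lemma~\ref{online_op:theoremreg} whose right-hand side carries the additional negative term $-\underline{\mu}\sum_{t=1}^T\sum_{i=1}^n \calD_{\psi_i}(y_{i,t},x_{i,t})$, with $\underline{\mu}=\min_i\mu_i$.

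Next I specialize to $\bsy_T=\check{\bsx}_T^*$, so that each $y_{i,t}$ is the static optimum $\check{x}_i^*$ and $V_{\Phi}(\check{\bsx}_T^*)=0$ (identity mapping). Setting $D_t:=\sum_{i=1}^n\calD_{\psi_i}(\check{x}_i^*,x_{i,t})$, the relation $\sum_{t=1}^T E_2(t+1)=\sum_{t=1}^T (D_t-D_{t+1})/\alpha_{t+1}$ and Abel summation give
\begin{align*}
\sum_{t=1}^T E_2(t+1)\le \frac{D_1}{\alpha_2}+\sum_{t=2}^T D_t\left(\frac{1}{\alpha_{t+1}}-\frac{1}{\alpha_t}\right).
\end{align*}
Combining with $-\underline{\mu}\sum_{t=1}^T D_t$, the aggregate coefficient on $D_t$ is $(1/\alpha_{t+1}-1/\alpha_t)-\underline{\mu}$. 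For $\alpha_t=1/t^{\max\{1-\kappa,\kappa\}}$, concavity of $x\mapsto x^{c}$ on $c\in(0,1)$ gives $(t+1)^{c}-t^{c}\le c\,t^{c-1}$, so the choice $B_4=\lceil 1/\underline{\mu}^{1/\kappa}\rceil$ renders this aggregate non-positive for every $t\ge B_4$; those $D_t$ terms are then simply discarded. For the finitely many $t<B_4$ I bound $D_t\le nd(X)K$ using \eqref{online_op:bregmanupp} and telescope the stepsize differences, producing a $T$-independent contribution of order $d(X)K/\alpha_{B_4}$, which is exactly the $2nd(X)K(B_4)^{1-\kappa}$ summand of $C_4$.

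The stepsize series $C_{1,1}\sum_t\gamma_{t+1}$ and $C_{1,2}\sum_t\alpha_{t+1}$ are controlled via $\sum_{t=1}^T 1/t^{p}\le T^{1-p}/(1-p)$, giving $\mathcal{O}(T^{\kappa}/\kappa)$ terms that match the other summands of $C_4$; one also observes that the $\|q_{i,t}\|^2$ discount in \eqref{online_op:theoremregequ} is non-positive (since $\gamma_t$ is non-increasing and $\beta_{t+1}>0$), so it is simply dropped. Combining everything yields \eqref{online_op:regstongconvex}. For \eqref{online_op:regcstongconvex} I plug the updated bookkeeping into \eqref{online_op:theoremconsequ}: with $\alpha_t=1/t^{\max\{1-\kappa,\kappa\}}$ and $\beta_t=1/t^{\kappa}$ one has $\sum_t\alpha_{t+1}=\mathcal{O}(T^{\min\{1-\kappa,\kappa\}})$ and $\sum_t\beta_{t+1}=\mathcal{O}(T^{1-\kappa})$, so the bracketed prefactor of \eqref{online_op:theoremconsequ} is $\mathcal{O}(T^{1-\kappa})$ and the braced expression is dominated by $2nFT=\mathcal{O}(T)$; the squared violation is therefore $\mathcal{O}(T^{2-\kappa})$, giving $\mathcal{O}(T^{1-\kappa/2})$ after taking square roots. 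The main obstacle will be the Abel-summation bookkeeping --- verifying that $(1/\alpha_{t+1}-1/\alpha_t)\le \underline{\mu}$ holds for all $t\ge B_4$ in both regimes $\kappa<1/2$ and $\kappa\ge 1/2$, and checking that the boundary contributions can indeed be absorbed into the constants $C_1$ and $C_4$ stated in the theorem.
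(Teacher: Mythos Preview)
Your overall strategy---inserting the strong-convexity discount $-\underline{\mu}\,\calD_{\psi_i}(y_{i,t},x_{i,t})$ into the one-step bound of Lemma~\ref{online_op:lemma_regretdelta}, carrying it through Lemma~\ref{online_op:theoremreg}, and Abel-summing so that each Bregman term acquires the coefficient $(1/\alpha_{t+1}-1/\alpha_t)-\underline{\mu}$---is exactly what the paper does, and your treatment of \eqref{online_op:regcstongconvex} is correct.

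There is, however, a genuine gap in the regime $\kappa\ge 1/2$. There $\alpha_t=1/t^{\kappa}$, so
\[
\frac{1}{\alpha_{t+1}}-\frac{1}{\alpha_t}=(t+1)^{\kappa}-t^{\kappa}\le \kappa\,t^{\kappa-1},
\]
and forcing this below $\underline{\mu}$ requires roughly $t\ge(\kappa/\underline{\mu})^{1/(1-\kappa)}$, \emph{not} $B_4=\lceil(1/\underline{\mu})^{1/\kappa}\rceil$. For $\kappa$ close to $1$ the correct threshold can be vastly larger than $B_4$ (e.g.\ $\kappa=0.9$, $\underline{\mu}=0.5$ gives a threshold near $357$ while $B_4=3$), so your claimed non-positivity for $t\ge B_4$ fails and the residual cannot be absorbed into the stated $C_4$; note that the term $2nd(X)K(B_4)^{1-\kappa}$ in $C_4$ is precisely $2nd(X)K/\alpha_{B_4}$ only when $\alpha_t=1/t^{1-\kappa}$.

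The paper resolves this by a case split. For $\kappa<1/2$ one has $\alpha_t=1/t^{1-\kappa}$; then $(t+1)^{1-\kappa}-t^{1-\kappa}<1/t^{\kappa}\le\underline{\mu}$ for $t\ge B_4$, and your argument gives $\Reg(\bsx_T,\check{\bsx}_T^*)\le C_4 T^{\kappa}$. For $\kappa\ge 1/2$ one has $\alpha_t=1/t^{\kappa}$, and here strong convexity is \emph{not used at all}: Corollary~\ref{online_op:theoremstatic} (i.e.\ Theorem~\ref{online_op:corollaryreg} with $c=\kappa$) already yields $\Reg(\bsx_T,\check{\bsx}_T^*)\le C_1 T^{\max\{1-\kappa,\kappa\}}=C_1 T^{\kappa}$. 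Taking the maximum of the two cases gives \eqref{online_op:regstongconvex}, which is exactly why both $C_1$ and $C_4$ appear in the statement.
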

\begin{proof}
See Appendix~\ref{online_op:theoremstongconvexproof}.
\end{proof}

\begin{corollary}\label{online_op:theoremstongconvexslater}
Under the same conditions as stated in Theorem~\ref{online_op:theoremslater}, if Assumption~\ref{online_op:assstrongconvex} also holds. Then,
\begin{align}
\Reg(\bsx_T,\check{\bsx}_T^*)\le& C_{4}T^{\kappa},\label{online_op:regstongconvexslater}\\
\|[\sum_{t=1}^Tg_{t}(x_{t})]_+\|\le& C_{3}T^{\max\{1-\kappa,\kappa\}}.\label{online_op:regcstongconvexslater}
\end{align}
\end{corollary}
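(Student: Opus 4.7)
The plan is to combine the Slater-condition argument from Theorem~\ref{online_op:theoremslater} with the strong-convexity argument from Theorem~\ref{online_op:theoremstongconvex}: under the common stepsize $\alpha_t=\gamma_t=1/t^{1-\kappa}$, $\beta_t=1/t^\kappa$, these two enhancements operate on disjoint portions of the master inequality from Lemma~\ref{online_op:theoremreg}, so they can be combined without interference. In particular, the constraint violation bound is inherited from Slater, and the regret improvement from $\mathcal{O}(T^{\max\{1-\kappa,\kappa\}})$ to $\mathcal{O}(T^\kappa)$ comes entirely from the strong-convexity cancellation of the $E_2$-telescoping.

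For the constraint violation bound \eqref{online_op:regcstongconvexslater}, I would observe that the proof of \eqref{online_op:regcslater} in Theorem~\ref{online_op:theoremslater} never uses Assumption~\ref{online_op:assstrongconvex}; it relies only on (i) the dual-variable bound $\|q_{i,t}\|\le F/\beta_t$ from Lemma~\ref{online_op:lemma_virtualbound}, (ii) the Slater-based refinement giving the uniform bound $\|\bar{q}_t\|\le B_2$, and (iii) the master inequality \eqref{online_op:theoremconsequ}. All three remain valid, so the same computation delivers $\|[\sum_{t=1}^T g_t(x_t)]_+\|\le C_3 T^{\max\{1-\kappa,\kappa\}}$ verbatim.

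For the regret bound \eqref{online_op:regstongconvexslater} I would follow the strategy of Theorem~\ref{online_op:theoremstongconvex}. First, upgrade Lemma~\ref{online_op:lemma_regretdelta} by replacing the convexity estimate $f_{i,t}(x_{i,t})-f_{i,t}(y_{i,t})\le \langle \nabla f_{i,t}(x_{i,t}),x_{i,t}-y_{i,t}\rangle$ with the stronger $\le \langle \nabla f_{i,t}(x_{i,t}),x_{i,t}-y_{i,t}\rangle-\underline{\mu}\calD_{\psi_i}(y_{i,t},x_{i,t})$ from Assumption~\ref{online_op:assstrongconvex}, which adds $-\underline{\mu}\sum_i\calD_{\psi_i}(y_{i,t},x_{i,t})$ to the right-hand side of \eqref{online_op:lemma_regretdeltaequ}. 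Summing over $t$ with $y_t\equiv y^*=\check{\bsx}_T^*$ and identity $\Phi_{i,t}$ (so $V_\Phi=0$), and dropping the non-positive $\|q_{i,t}\|^2$ penalty, yields
\begin{align*}
\Reg(\bsx_T,\check{\bsx}_T^*)
&\le C_{1,1}\sum_{t=1}^T\gamma_{t+1}+C_{1,2}\sum_{t=1}^T\alpha_{t+1}\\
&\quad+\sum_{t=1}^T E_2(t+1)-\underline{\mu}\sum_{t=1}^T\sum_{i=1}^n\calD_{\psi_i}(y_i^*,x_{i,t}),
\end{align*}
plus the $B_1$-dependent constants that Theorem~\ref{online_op:theoremstongconvex} absorbs from the dual-side bookkeeping in the proof of Lemma~\ref{online_op:theoremreg}. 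Next, Abel-summation on the telescope rewrites $\sum_{t=1}^T E_2(t+1)$ as $\frac{1}{\alpha_2}\sum_i\calD_{\psi_i}(y_i^*,x_{i,1})+\sum_{t=2}^T\bigl(\tfrac{1}{\alpha_{t+1}}-\tfrac{1}{\alpha_t}\bigr)\sum_i\calD_{\psi_i}(y_i^*,x_{i,t})$ plus a non-positive boundary term. By concavity of $t\mapsto t^{1-\kappa}$, $\tfrac{1}{\alpha_{t+1}}-\tfrac{1}{\alpha_t}\le(1-\kappa)t^{-\kappa}\le t^{-\kappa}$, and for $t\ge B_4=\lceil(1/\underline{\mu})^{1/\kappa}\rceil$ this is $\le\underline{\mu}$, so the strong-convexity penalty cancels the telescoped gain pointwise for every such $t$. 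The remaining window $t<B_4$, together with the boundary term, is bounded by $2nd(X)K(B_4)^{1-\kappa}$ using the uniform estimate $\calD_{\psi_i}\le d(X)K$ from \eqref{online_op:bregmanupp} and telescoping. Using $\sum_{t=1}^T 1/t^{1-\kappa}\le T^\kappa/\kappa$ for the first two sums then delivers $\Reg\le C_4 T^\kappa$ with the $C_4$ declared in Theorem~\ref{online_op:theoremstongconvex}.

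The main obstacle is the cutoff bookkeeping in the Abel-summation step. One must verify that with the Slater stepsize $\alpha_t=1/t^{1-\kappa}$ (which differs from Theorem~\ref{online_op:theoremstongconvex}'s $\alpha_t=1/t^{\max\{1-\kappa,\kappa\}}$ whenever $\kappa>1/2$) the threshold $B_4$ is still large enough for the strong-convexity penalty to absorb $\tfrac{1}{\alpha_{t+1}}-\tfrac{1}{\alpha_t}$ pointwise. This hinges on the specific estimate $(1-\kappa)t^{-\kappa}\le\underline{\mu}$ for $t\ge B_4$, which is what makes the exponent $\kappa$ (rather than $1-\kappa$) appear in $B_4$. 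The rest is routine: the $E_2$ residue collapses into a $T$-independent constant, leaving only the $\mathcal{O}(T^\kappa)$ contributions from $\sum\gamma_{t+1}$ and $\sum\alpha_{t+1}$.
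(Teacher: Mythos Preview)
Your proposal is correct and follows essentially the same route as the paper. The paper's own proof is two sentences: \eqref{online_op:regstongconvexslater} is literally ``the first step in the proof of \eqref{online_op:regstongconvex}'' (which already treats the case $\alpha_t=1/t^{1-\kappa}$ and derives the $C_4T^\kappa$ bound via the $E_3$-cancellation you describe), and \eqref{online_op:regcstongconvexslater} is just \eqref{online_op:regcslater} verbatim since that argument does not use strong convexity. Your only stated ``obstacle''---that the Slater stepsize $\alpha_t=1/t^{1-\kappa}$ differs from Theorem~\ref{online_op:theoremstongconvex}'s $\alpha_t=1/t^{\max\{1-\kappa,\kappa\}}$ for $\kappa>1/2$---is in fact a non-issue, because the first step of the proof of Theorem~\ref{online_op:theoremstongconvex} is precisely the sub-case $\alpha_t=1/t^{1-\kappa}$, so nothing new needs to be verified.
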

\begin{proof}
(\ref{online_op:regstongconvexslater}) follows from the first step in the proof of (\ref{online_op:regstongconvex}) and (\ref{online_op:regcstongconvexslater}) follows from (\ref{online_op:regcslater}).
\end{proof}
\begin{remark}
With some minor modifications, the results stated in Theorem~\ref{online_op:theoremstongconvex} and Corollary~\ref{online_op:theoremstongconvexslater} still hold if Assumption~\ref{online_op:assstrongconvex} is replaced by the assumption that for any $i\in[n]$ and $t\in\mathbb{N}_+$, $f_{i,t}$ or $r_{i,t}$ is $\mu_i$-strongly convex over $X_i$ with respect to $\psi_i$ with $\mu_i>0$.
\end{remark}





\section{NUMERICAL SIMULATIONS}\label{online_opsec:simulation}
This section evaluates the performance of Algorithm~\ref{online_op:algorithm} in solving the multi-target tracking problem introduced in Section~\ref{online_op:example}.
In the simulations, for each agent $i\in[n]$, $\Phi_{i,t}$ is set as the identity mapping and the strongly convex function $\psi_i(x)=\sigma\|x\|^2$ is used to define the Bregman divergence $\calD_{\psi_i}$. Thus, $\calD_{\psi_i}(x,y)=\sigma\|x-y\|^2, \forall i\in[n]$. The stepsize sequences given (\ref{online_op:stepsizestrong}) are used. Moreover, agent $i$ could use a regularization function  $r_{i,t}(x_{i,t})=\lambda_{i,1}\|x_{i,t}\|_1+\lambda_{i,2}\|x_{i,t}\|^2$ to influence the structure of its action, where $\lambda_{i,1}$ and $\lambda_{i,2}$ are nonnegative constants. At each time $t$, an undirected graph is used as the communication graph. Specifically, connections between vertices are random and the probability of two vertices being connected is $\rho$. To guarantee that Assumption~\ref{online_op:assgraph} holds, edges $(i,i+1),~i\in[n-1]$ are added and $[W_t]_{ij}=\frac{1}{n}$ if $(j,i)\in\mathcal{E}_t$ and $[W_t]_{ii}=1-\sum_{j\in\mathcal{N}^{\inout}_i(\mathcal{G}_t)}[W_t]_{ij}$.

We assume $n=50$, $m=5$, $\sigma=10$, $p_i=6$, $X_i=[0,5]^{p_i}$, $\zeta_{i,1}=\lambda_{i,1}=1$, $\zeta_{i,2}=\lambda_{i,2}=30$, $i\in[n]$, and $\rho=0.2$. Each component of $\pi_{i,t}$ is drawn from the discrete uniform distribution in $[0,10]$ and each component of $D_{i,t}$ is drawn from the discrete uniform distribution in $[-5,5]$. We let $y_{i,t}=[2(\zeta_{i,2}+\lambda_{i,2})x^0_{i,t}+\zeta_{i,1}\pi_{i,t}+\lambda_{i,1}{\bf 1}_{p_i}]/(2\zeta_{i,2})$, where $x^0_{i,t+1}=A_{i,t}x^0_{i,t}$ with $A_{i,t}$ being a  doubly stochastic matrix and $x^0_{i,1}$ being a vector that is uniformly drawn from $X_i$. In order to guarantee the constraints are feasible, we let $d_{i,t}=D_{i,t}x^0_{i,t}$.

\subsection{Dynamics of Optimal Sequences}
Under the above settings, we have that $x^*_{i,t}=x^0_{i,t}$. To investigate the dependence of the dynamic regret and constraint violation with $\Phi_{i,t}$, we run Algorithm~\ref{online_op:algorithm} for two cases: $\Phi_{i,t}$ is the identity mapping and the linear mapping $A_{i,t}$. Figs. \ref{online:figPhi} (a) and (b) show the evolutions of $\Reg(\bsx_T,\bsx^*_T)/T$ and $\|[\sum_{t=1}^Tg_{t}(x_{t})]_+\|/T$, respectively, and we can see that knowing the dynamics of the optimal sequence leads to smaller dynamic regret and  constraint violation.

\begin{figure}
\begin{subfigure}{.5\textwidth}
  \centering
  \includegraphics[width=\linewidth]{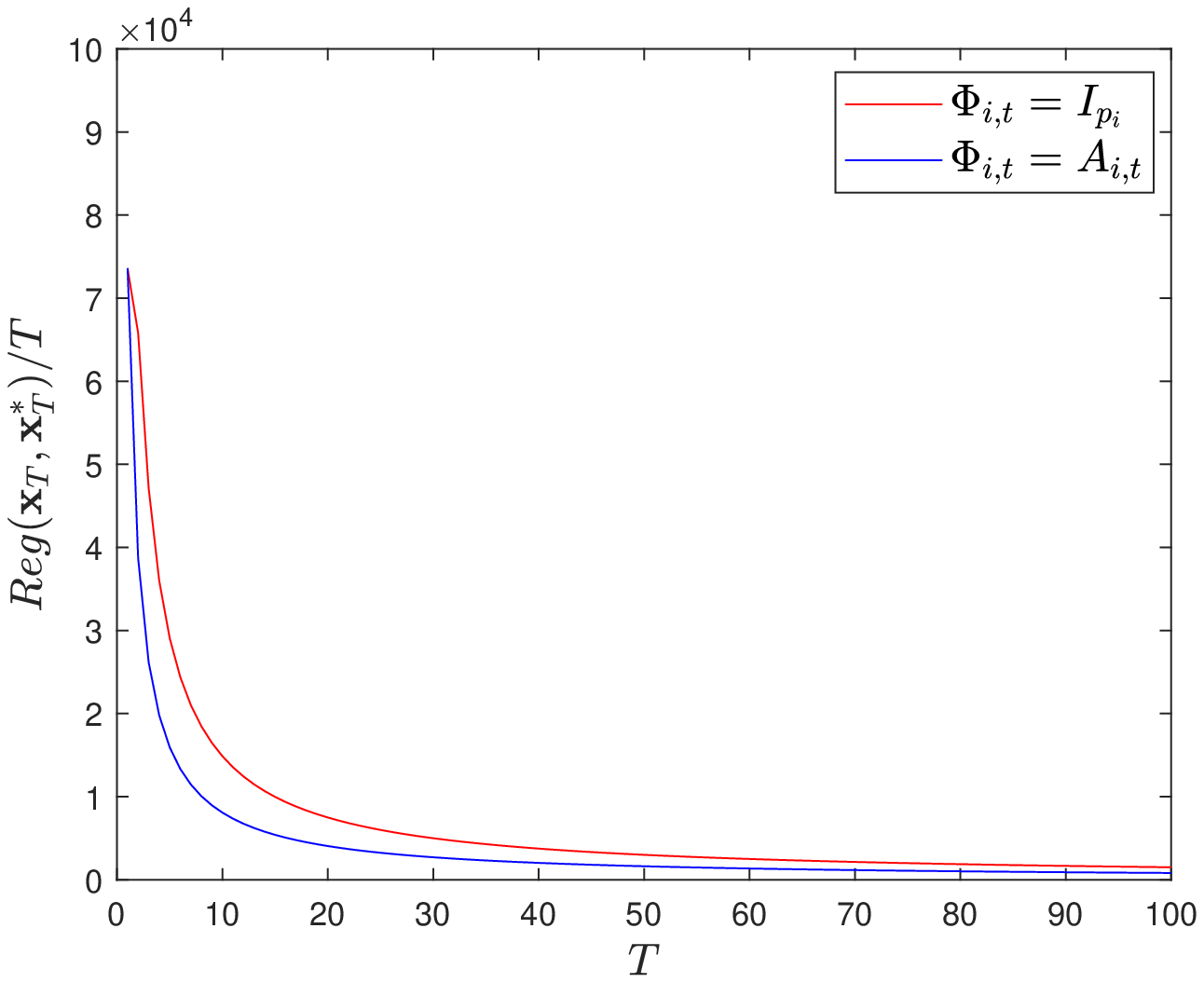}
  \caption{}
  \label{online:figPhireg}
\end{subfigure}%
\\
\begin{subfigure}{.5\textwidth}
  \centering
  \includegraphics[width=\linewidth]{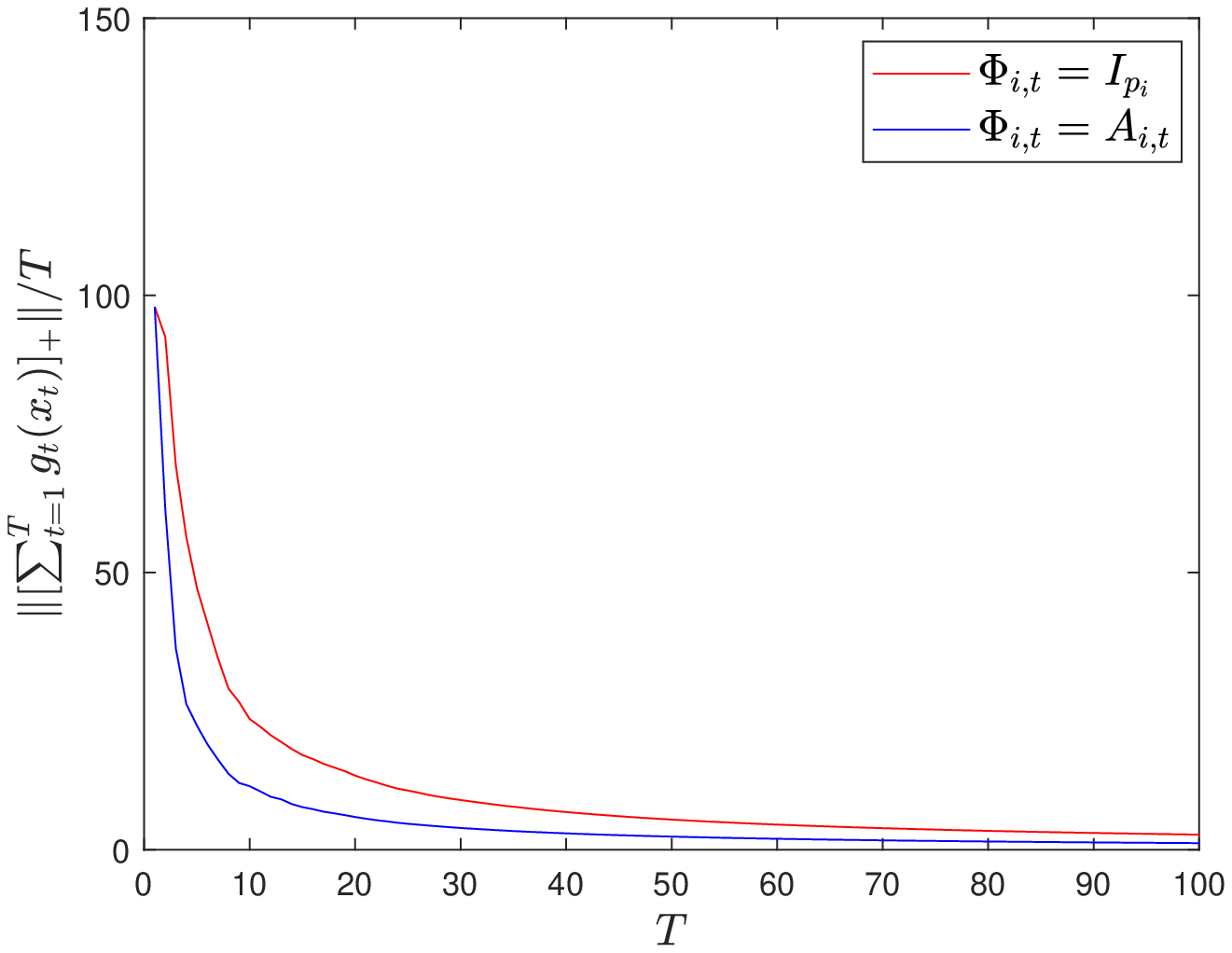}
  \caption{}
  \label{online:figPhiregc}
\end{subfigure}
\caption{Comparison of different $\Phi_{i,t}$: (a) Evolutions of $\Reg(\bsx_T,\bsx^*_T)/T$; (b) Evolutions of $\|[\sum_{t=1}^Tg_{t}(x_{t})]_+\|/T$.}
\label{online:figPhi}
\end{figure}

\subsection{Regularization Function}
To highlight the dependence of the dynamic regret and constraint violation with the regularization function, we run Algorithm~\ref{online_op:algorithm} for two cases. Case I: $f_{i,t}(x_i)=\zeta_{i,1}\langle \pi_{i,t},x_i\rangle+\zeta_{i,2}\|H_{i,t}x_{i}-y_{i,t}\|^2$, $r_{i,t}(x_i)=\lambda_{i,1}\|x_i\|_1+\lambda_{i,2}\|x_i\|^2$ and Case II: $f_{i,t}(x_i)=\zeta_{i,1}\langle \pi_{i,t},x_i\rangle+\zeta_{i,2}\|H_{i,t}x_{i}-y_{i,t}\|^2+\lambda_{i,1}\|x_i\|_1+\lambda_{i,2}\|x_i\|^2$, $r_{i,t}(x_i)=0$. Figs. \ref{online:figregula} (a) and (b) show the evolutions of $\Reg(\bsx_T,\bsx^*_T)/T$ and $\|[\sum_{t=1}^Tg_{t}(x_{t})]_+\|/T$, respectively, for these two cases. From these two figures, we can see that having the regularization term explicitly leads to smaller dynamic regret and constraint violation.

\begin{figure}
\begin{subfigure}{.5\textwidth}
  \centering
  \includegraphics[width=\linewidth]{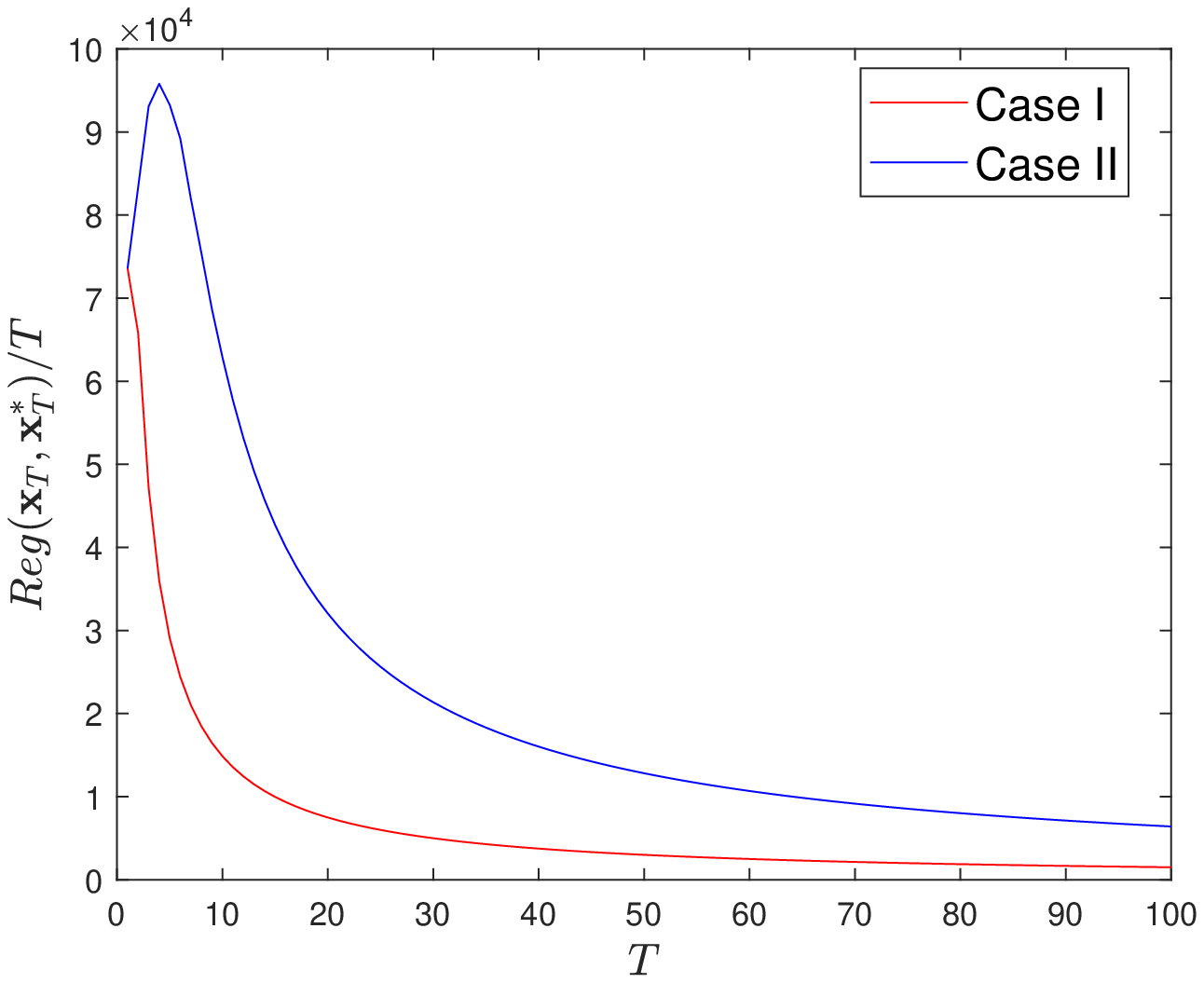}
  \caption{}
  \label{online:figregulareg}
\end{subfigure}%
\\
\begin{subfigure}{.5\textwidth}
  \centering
  \includegraphics[width=\linewidth]{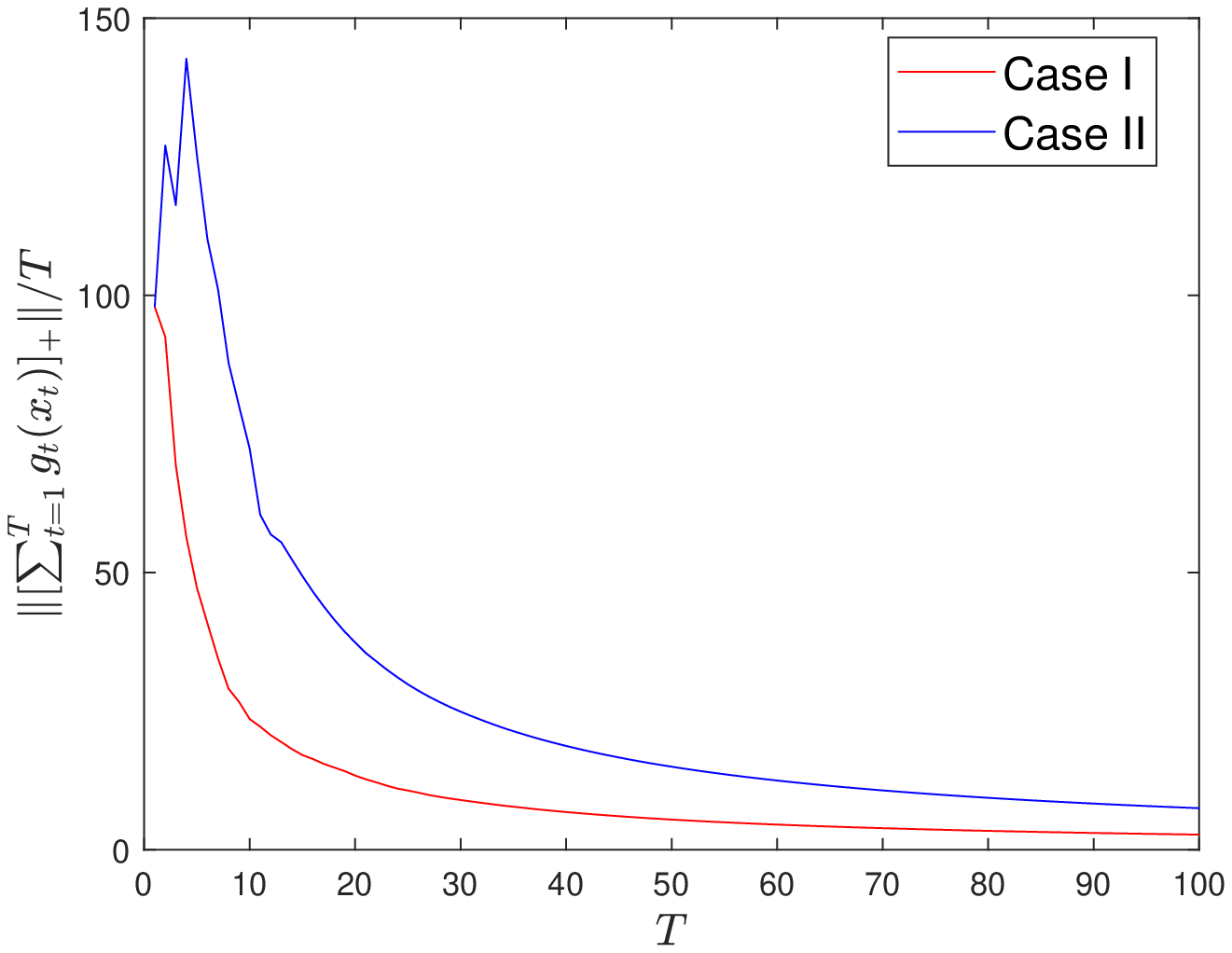}
  \caption{}
  \label{online:figregularegc}
\end{subfigure}
\caption{(a) Evolutions of $\Reg(\bsx_T,\bsx^*_T)/T$. (b) Evolutions of $\|[\sum_{t=1}^Tg_{t}(x_{t})]_+\|/T$.}
\label{online:figregula}
\end{figure}

\subsection{Effects of Parameter $\kappa$}
To investigate the dependence of the dynamic regret and constraint violation with the parameter $\kappa$, we run Algorithm~\ref{online_op:algorithm} with $\kappa=0.1,0.3,0.5,0.7,0.9$. Figs. \ref{online:figkappas} (a) and (b) show effects of $\kappa$ on  $\Reg(\bsx_T,\bsx^*_T)/T$ and $\|[\sum_{t=1}^Tg_{t}(x_{t})]_+\|/T$, respectively,  when $T=100,500,1000$. From these two figures, we can see that $\kappa$ almost does not affect $\Reg(\bsx_T,\bsx^*_T)/T$ and $\|[\sum_{t=1}^Tg_{t}(x_{t})]_+\|/T$ when $T$ is large (e.g., $T\ge500$). This phenomenon is not contradictory to the theoretical results shown in Theorem \ref{online_op:theoremstongconvex} since the theoretical results provide upper bounds of $\Reg(\bsx_T,\bsx^*_T)/T$ and $\|[\sum_{t=1}^Tg_{t}(x_{t})]_+\|/T$.

\begin{figure}
\begin{subfigure}{.5\textwidth}
  \centering
  \includegraphics[width=\linewidth]{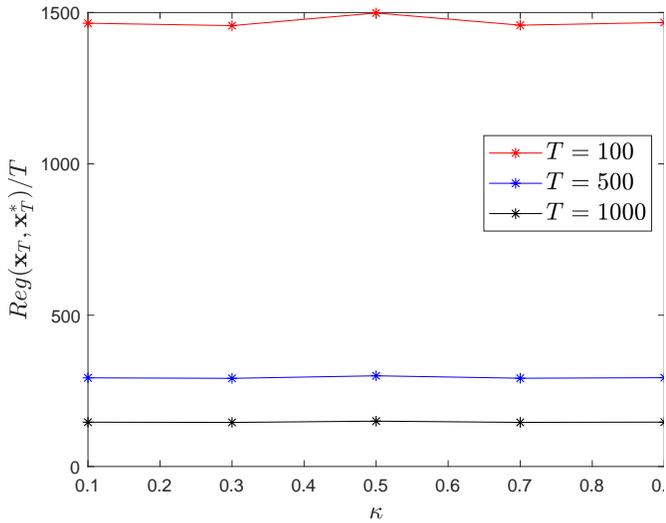}
  \caption{}
  \label{online:figkappasreg}
\end{subfigure}%
\\
\begin{subfigure}{.5\textwidth}
  \centering
  \includegraphics[width=\linewidth]{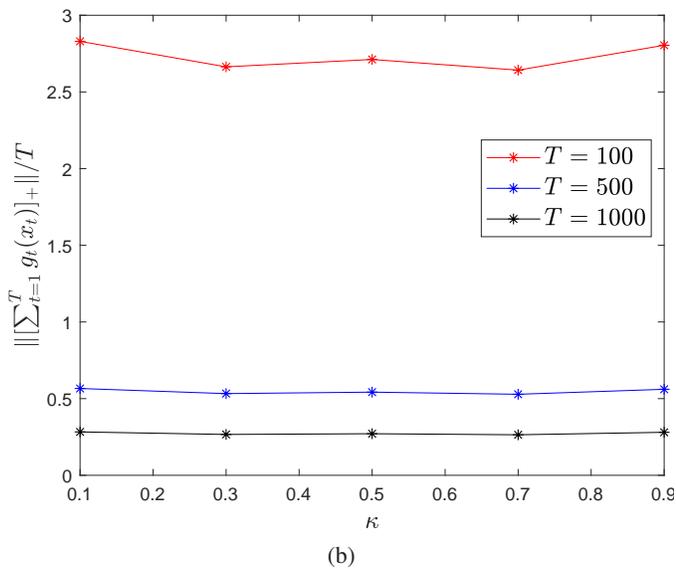}
  \caption{}
  \label{online:figkappasregc}
\end{subfigure}
\caption{Effects of parameter $\kappa$ on (a) $\Reg(\bsx_T,\bsx^*_T)/T$ and (b)  $\|[\sum_{t=1}^Tg_{t}(x_{t})]_+\|/T$ when $T=100,500,1000$.}
\label{online:figkappas}
\end{figure}

\subsection{Comparison to other Algorithms}
Since there are no distributed online algorithms to solve problem (\ref{online:problem1}), we compare Algorithm~\ref{online_op:algorithm} with the centralized online algorithms in \cite{sun2017safety,chen2017online,neely2017online}. Here, Algorithm~1 in \cite{sun2017safety} with $\alpha=10$, $\delta=1$, and $\mu=1/\sqrt{T}$, Algorithm~1 in \cite{chen2017online} with $\alpha=\mu=T^{-1/3}$, and the virtual queue algorithm in \cite{neely2017online} with $V=\sqrt{T}$ and $\alpha=V^2$ are used. Figs. \ref{online:figdisalgorithms} (a) and (b) show the evolutions of $\Reg(\bsx_T,\bsx^*_T)/T$ and $\|[\sum_{t=1}^Tg_{t}(x_{t})]_+\|/T$, respectively, for these algorithms. From these two figures, we can see that in this example Algorithm~\ref{online_op:algorithm} achieves smaller dynamic regret and constraint violation than the algorithms in \cite{chen2017online,neely2017online} and almost the same values as the algorithm in \cite{sun2017safety}.

\begin{figure}
\begin{subfigure}{.5\textwidth}
  \centering
  \includegraphics[width=\linewidth]{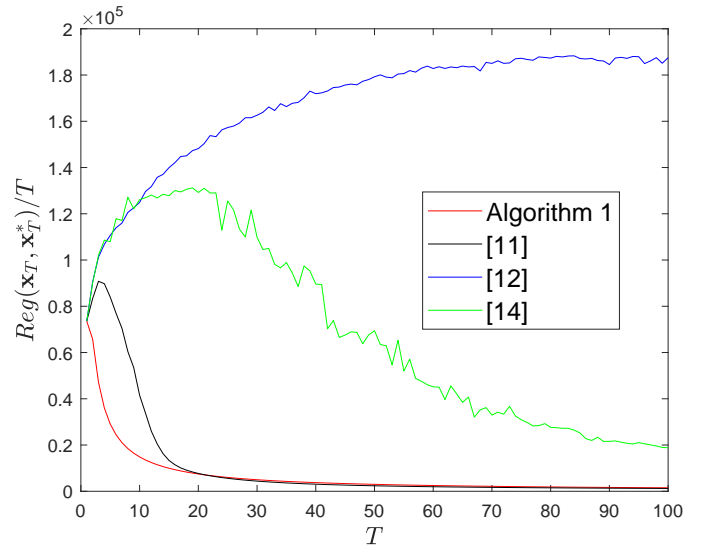}
  \caption{}
  \label{online:figdisalgorithmsreg}
\end{subfigure}%
\\
\begin{subfigure}{.5\textwidth}
  \centering
  \includegraphics[width=\linewidth]{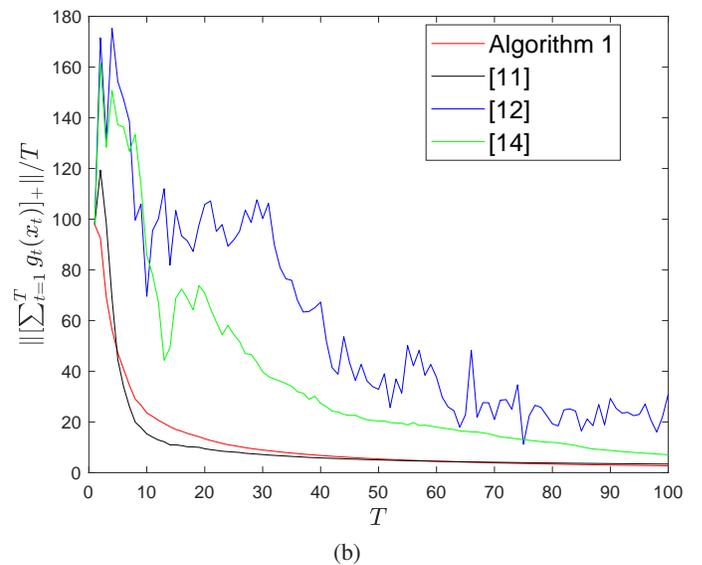}
  \caption{}
  \label{online:figdisalgorithmsregc}
\end{subfigure}
\caption{Comparison of other algorithms: (a) Evolutions of $\Reg(\bsx_T,\bsx^*_T)/T$; (b) Evolutions of $\|[\sum_{t=1}^Tg_{t}(x_{t})]_+\|/T$.}
\label{online:figdisalgorithms}
\end{figure}

\section{CONCLUSION}\label{online_opsec:conclusion}
In this paper, we considered an online convex optimization problem with time-varying coupled inequality constraints. We proposed a distributed online primal-dual dynamic mirror descent algorithm to solve this problem. We derived regret and constraint violation bounds for the algorithm  and showed how they depend on the stepsize sequences, the accumulated dynamic variation of the comparator sequence, the number of agents, and the network connectivity. We proved that the algorithm achieves sublinear regret and constraint violation for both arbitrary and strongly convex objective functions. We showed that the algorithm and results in this paper can be cast as extensions of existing algorithms.  Future research directions include extending the algorithm with bandit feedback and learning the dynamics of the optimal sequence.

\section*{ACKNOWLEDGMENTS}
The first author would like to thank the hospitality from the School of Electrical and Electronic Engineering, Nanyang Technological University during his visit March--June 2018.
The first author is also thankful to Dr. Tao Yang for discussions on distributed optimization.

\bibliographystyle{IEEEtran}
\bibliography{refs}

\appendix\label{online_op:appendix}

\subsection{Proof of Lemma~\ref{online_op:lemma_mirror}}\label{online_op:lemma_mirrorproof}
(i) Denote $\tilde{h}(x)=h(x)+\calD_\psi(x,z)$. Then $\tilde{h}$ is a convex function on $\Dom$.
Thus the optimality condition (\ref{online_op:lemma_mirroreq1}), i.e., $y=\argmin_{x\in\Dom}\tilde{h}(x)$, implies
$
\langle y-x,\nabla \tilde{h}(y)\rangle\le0,~\forall x\in\Dom
$.
Substituting $\nabla \tilde{h}(y)=\nabla h(y)+\nabla\psi(y)-\nabla\psi(z)$ into the above inequality yields
\begin{align*}
&\langle y-x,\nabla h(y)\rangle\le\langle y-x,\nabla\psi(z)-\nabla\psi(y)\rangle\\
=&\calD_\psi(x,z)-\calD_\psi(x,y)-\calD_\psi(y,z),~\forall x\in\Dom,
\end{align*}
where the equality holds since (\ref{online_op:bregmthree}). Hence, (\ref{online_op:lemma_mirroreq2}) holds.

(ii) $\tilde{h}(x)$ is strongly convex with convexity parameter $\sigma$ since $\calD_\psi$ is strongly convex. It is known that if $\tilde{h}:\Dom\rightarrow\mathbb{R}$ is a strongly convex function and is minimized at the point $x^{\min}\in\Dom$, then
\begin{align*}
\tilde{h}(x^{\min})\le \tilde{h}(x)-\frac{\sigma}{2}\|x-x^{\min}\|^2,~\forall x\in\Dom.
\end{align*}
Thus the optimality condition of (\ref{online_op:lemma_mirroreq1}) implies
\begin{align*}
h(y)+\calD_\psi(y,z)
\le h(z)+\calD_\psi(z,z)-\frac{\sigma}{2}\|z-y\|^2.
\end{align*}
Noting that $\calD_\psi(y,z)\ge\frac{\sigma}{2}\|z-y\|^2$ and $\calD_\psi(z,z)=0$, and rearranging the above inequality give
\begin{align}\label{online_op:cy}
\sigma\|z-y\|^2\le \frac{\sigma}{2}\|z-y\|^2+\calD_\psi(y,z)
\le h(z)-h(y).
\end{align}
From (\ref{online_op:subgradient}) and $\|\nabla h(x)\|\le G_h,~\forall x\in\Dom$, we have
\begin{align}\label{online_op:rcy}
h(z)-h(y)\le\langle\nabla h(z),z-y\rangle\le G_h\|z-y\|.
\end{align}
Thus, combining (\ref{online_op:cy}) and (\ref{online_op:rcy}) yields (\ref{online_op:lemma_mirrorine}).

\subsection{Proof of Lemma~\ref{online_op:lemma_virtualbound}}\label{online_op:lemma_virtualboundproof}
(i) We prove (\ref{online_op:lemma_virtualboundeqy}) by induction. 

It is straightforward to see that $\tilde{q}_{i,1}=q_{i,1}={\bf 0}_{m},~\forall i\in[n]$, thus $\|\tilde{q}_{i,1}\|\le \frac{F}{\beta_1},~\|q_{i,1}\|\le \frac{F}{\beta_1},~\forall i\in[n]$. Assume that (\ref{online_op:lemma_virtualboundeqy}) is true at time $t$ for all $i\in[n]$. We show that it remains true at time $t+1$. The convexity of norms and $\sum_{j=1}^n[W_{t}]_{ij}=1$ yield
\begin{align*}
\|\tilde{q}_{i,t+1}\|\le&\sum_{j=1}^n[W_{t}]_{ij}\|q_{j,t}\|\le\sum_{j=1}^n[W_{t}]_{ij}\frac{ F}{\beta_{t}}\\
\le& \frac{ F}{\beta_{t+1}},~\forall i\in[n],
\end{align*}
where the last inequality holds due to the sequence $\{\beta_t\}$ is non-increasing.
(\ref{online_op:subgradient}) and (\ref{online_op:al_b}) imply
\begin{align}
&(1-\gamma_{t+1}\beta_{t+1})\tilde{q}_{i,t+1}+\gamma_{t+1}b_{i,t+1}\nonumber\\
&\le(1-\gamma_{t+1}\beta_{t+1})\tilde{q}_{i,t+1}+\gamma_{t+1}g_{i,t}(\tilde{x}_{i,t+1}).\label{online_op:qg}
\end{align}
Since $\|[x]_+\|\le\|y\|$ for all $x\le y$, (\ref{online_op:al_q}), (\ref{online_op:qg}), and (\ref{online_op:ftgtupper}) imply
\begin{align*}
&\|q_{i,t+1}\|\le(1-\gamma_{t+1}\beta_{t+1})\|\tilde{q}_{i,t+1}\|+\gamma_{t+1}\|g_{i,t}(\tilde{x}_{i,t+1})\|\\
&\le(1-\gamma_{t+1}\beta_{t+1})\frac{ F}{\beta_{t+1}}+\gamma_{t+1} F= \frac{ F}{\beta_{t+1}},~\forall i\in[n].
\end{align*}
Thus, (\ref{online_op:lemma_virtualboundeqy}) follows.

(ii) We can rewrite
(\ref{online_op:al_q}) as
\begin{align*}
q_{i,t+1}=\sum_{j=1}^n[W_{t}]_{ij}q_{j,t}+\epsilon^q_{i,t},
\end{align*}
where $\epsilon^q_{i,t}=[(1-\gamma_{t+1}\beta_{t+1})\tilde{q}_{i,t+1}+\gamma_{t+1}b_{i,t+1}]_{+}-\tilde{q}_{i,t+1}$.
From (\ref{online_op:ftgtupper}), (\ref{online_op:subgupper}), and (\ref{online_op:domainupper}), we have
\begin{align}\label{online_op:lemma_qbarequb}
\|b_{i,t+1}\|
&\le\|[g_{i,t}(x_{i,t})]_+\|+\|\nabla g_{i,t}(x_{i,t})\|
\|(\tilde{x}_{i,t+1}-x_{i,t})\|\nonumber\\
&\le F+Gd(X),~\forall i\in[n].
\end{align}
Thus, (\ref{online_op:proj}), (\ref{online_op:lemma_virtualboundeqy}), and (\ref{online_op:lemma_qbarequb}) give
\begin{align}\label{onlune_op:lemma_qbarequeps}
\|\epsilon^q_{i,t}\|
\le&\|-\gamma_{t+1}\beta_{t+1}\tilde{q}_{i,t+1}+\gamma_{t+1}b_{i,t+1}\|\nonumber\\
\le&B_1\gamma_{t+1},~\forall i\in[n].
\end{align}
Then, Lemma 2 in \cite{lee2017sublinear}, $q_{i,1}={\bf 0}_{m},~\forall i\in[n]$, and (\ref{onlune_op:lemma_qbarequeps}) yield
\begin{align*}
\|q_{i,t+1}-\bar{q}_{t+1}\|
\le n\tau B_1\sum_{s=1}^{t}\gamma_{s+1}\lambda^{t-s},~\forall i\in[n].
\end{align*}
So (\ref{online_op:lemma_qbarequ}) follows since $\sum_{j=1}^n[W_{t}]_{ij}=1$ and $\| \tilde{q}_{i,t+1}-\bar{q}_{t}\|=\|\sum_{j=1}^n[W_{t}]_{ij}q_{j,t}-\bar{q}_{t}\|\le \sum_{j=1}^n[W_{t}]_{ij}\|q_{j,t}-\bar{q}_{t}\|$.

(iii) Applying (\ref{online_op:proj}) to (\ref{online_op:al_q}) gives
\begin{align}
&\|q_{i,t}-q\|^2
\le\Big\|(1-\beta_t\gamma_t)\tilde{q}_{i,t}+\gamma_tb_{i,t}-q\Big\|^2\nonumber\\
&=\|\tilde{q}_{i,t}-q\|^2+(\gamma_t)^2\|b_{i,t}-\beta_t\tilde{q}_{i,t}\|^2\nonumber\\
&~~~+2\gamma_t[\tilde{q}_{i,t}]^\top\nabla g_{i,t-1}(x_{i,t-1})(\tilde{x}_{i,t}-x_{i,t-1})\nonumber\\
&~~~-2\gamma_tq^\top\nabla g_{i,t-1}(x_{i,t-1})(\tilde{x}_{i,t}-x_{i,t-1})\nonumber\\
&~~~+2\gamma_t[\tilde{q}_{i,t}-q]^\top g_{i,t-1}(x_{i,t-1})\nonumber\\
&~~~-2\beta_t\gamma_t[\tilde{q}_{i,t}-q]^\top\tilde{q}_{i,t}.\label{online_op:qmu}
\end{align}
For the first term on the right-hand side of the equality of (\ref{online_op:qmu}), by convexity of norms and $\sum_{j=1}^n[W_{t-1}]_{ij}=1$, it can be concluded that
\begin{align}
\|\tilde{q}_{i,t}-q\|^2=&\|\sum_{j=1}^n[W_{t-1}]_{ij}q_{j,t-1}-\sum_{j=1}^n[W_{t-1}]_{ij}q\|^2\nonumber\\
\le&\sum_{j=1}^n[W_{t-1}]_{ij}\|q_{j,t-1}-q\|^2.\label{online_op:qmu0}
\end{align}
For the second term, (\ref{online_op:lemma_virtualboundeqy}) and (\ref{online_op:lemma_qbarequb}) yield
\begin{align}
(\gamma_t)^2\|b_{i,t}-\beta_t\tilde{q}_{i,t}\|^2
\le(B_1\gamma_t)^2.\label{online_op:qmu1}
\end{align}
For the fourth term,  (\ref{online_op:subgupper}), and the Cauchy-Schwarz  inequality yield
\begin{align}
&-2\gamma_tq^\top\nabla g_{i,t-1}(x_{i,t-1})(\tilde{x}_{i,t}-x_{i,t-1})\nonumber\\
&\le2\gamma_t\Big(\frac{G^2\alpha_t}{\underline{\sigma}}\|q\|^2
+\frac{\underline{\sigma}}{4\alpha_t}\|\tilde{x}_{i,t}-x_{i,t-1}\|^2\Big).\label{online_op:qmu2}
\end{align}
For the fifth term, we have
\begin{align}
&2\gamma_t[\tilde{q}_{i,t}-q]^\top g_{i,t-1}(x_{i,t-1})
=2\gamma_t[\bar{q}_{t-1}-q]^\top g_{i,t-1}(x_{i,t-1})\nonumber\\
&+2\gamma_t[\tilde{q}_{i,t}-\bar{q}_{t-1}]^\top g_{i,t-1}(x_{i,t-1}).\label{online_op:qmu4}
\end{align}
Moreover, from (\ref{online_op:ftgtupper}) and (\ref{online_op:lemma_qbarequ}), we have
\begin{align}
&2\gamma_t[\tilde{q}_{i,t}-\bar{q}_{t-1}]^\top g_{i,t-1}(x_{i,t-1})\nonumber\\
&\le2\gamma_t\|\tilde{q}_{i,t}-\bar{q}_{t-1}\|\|g_{i,t-1}(x_{i,t-1})\|
\le\frac{2\gamma_tE_{1}(t)}{n}.\label{online_op:qmu5}
\end{align}
For the last term in the equality of (\ref{online_op:qmu}),  neglecting the nonnegative term $\beta_t\gamma_t\|\tilde{q}_{i,t}\|^2$ gives
\begin{align}
-2\beta_t\gamma_t[\tilde{q}_{i,t}-q]^\top\tilde{q}_{i,t}
\le\beta_t\gamma_t(\|q\|^2-\|\tilde{q}_{i,t}-q\|^2).\label{online_op:qmu3}
\end{align}
Then, combining (\ref{online_op:qmu})--(\ref{online_op:qmu3}), summing over $i\in[n]$, and dividing by $2\gamma_t$, and using $\sum_{i=1}^n[W_{t-1}]_{ij}=1,~\forall t\in\mathbb{N}_+$, yield (\ref{online_op:gvirtualnorm}).

\subsection{Proof of Lemma~\ref{online_op:lemma_regretdelta}}\label{online_op:lemma_regretdeltaproof}
From (\ref{online_op:subgradient}), we have
\begin{align}\label{online_op:fxy}
&f_{i,t}(x_{i,t})+r_{i,t}(x_{i,t})-f_{i,t}(y_{i,t})-r_{i,t}(y_{i,t})\nonumber\\
&=f_{i,t}(x_{i,t})-f_{i,t}(y_{i,t})+r_{i,t}(x_{i,t})-r_{i,t}(\tilde{x}_{i,t+1})\nonumber\\
&~~~+r_{i,t}(\tilde{x}_{i,t+1})-r_{i,t}(y_{i,t})\nonumber\\
&\le\langle\nabla f_{i,t}(x_{i,t}),x_{i,t}-y_{i,t}\rangle
+\langle\nabla r_{i,t}(x_{i,t}),x_{i,t}-\tilde{x}_{i,t+1}\rangle\nonumber\\
&~~~+\langle\nabla r_{i,t}(\tilde{x}_{i,t+1}),\tilde{x}_{i,t+1}-y_{i,t}\rangle\nonumber\\
&=\langle\nabla f_{i,t}(x_{i,t})+\nabla r_{i,t}(x_{i,t}),x_{i,t}-\tilde{x}_{i,t+1}\rangle\nonumber\\
&~~~+\langle\nabla f_{i,t}(x_{i,t})+\nabla r_{i,t}(\tilde{x}_{i,t+1}),\tilde{x}_{i,t+1}-y_{i,t}\rangle.
\end{align}
We now bound each of the two terms above. For the first term, (\ref{online_op:subgupper}) and the Cauchy-Schwarz  inequality give
\begin{align}
&\langle\nabla f_{i,t}(x_{i,t})+\nabla r_{i,t}(x_{i,t}),x_{i,t}-\tilde{x}_{i,t+1}\rangle\nonumber\\
&\le2G\|x_{i,t}-\tilde{x}_{i,t+1}\|\nonumber\\
&\le\frac{\underline{\sigma}}{4\alpha_{t+1}}\|x_{i,t}-\tilde{x}_{i,t+1}\|^2
+\frac{4G^2\alpha_{t+1}}{\underline{\sigma}}.\label{online_op:fxx}
\end{align}
For the second term, we have
\begin{align}
&\langle\nabla f_{i,t}(x_{i,t})+\nabla r_{i,t}(\tilde{x}_{i,t+1}),\tilde{x}_{i,t+1}-y_{i,t}\rangle\nonumber\\
&=\langle(\nabla g_{i,t}(x_{i,t}))^\top \tilde{q}_{i,t+1},y_{i,t}-\tilde{x}_{i,t+1}\rangle\nonumber\\
&~~~+\langle a_{i,t+1}+\nabla r_{i,t}(\tilde{x}_{i,t+1}),\tilde{x}_{i,t+1}-y_{i,t}\rangle\nonumber\\
&=\langle(\nabla g_{i,t}(x_{i,t}))^\top  \tilde{q}_{i,t+1},y_{i,t}-x_{i,t}\rangle\nonumber\\
&~~~+\langle(\nabla g_{i,t}(x_{i,t}))^\top  \tilde{q}_{i,t+1},x_{i,t}-\tilde{x}_{i,t+1}\rangle\nonumber\\
&~~~+\langle a_{i,t+1}+\nabla r_{i,t}(\tilde{x}_{i,t+1}),\tilde{x}_{i,t+1}-y_{i,t}\rangle.\label{online_op:fxy1}
\end{align}
From (\ref{online_op:subgradient}) and $\tilde{q}_{i,t}\ge{\bf0}_{m},~\forall t\in\mathbb{N}_+,~\forall i\in[n]$, we have
\begin{align}
&\langle(\nabla g_{i,t}(x_{i,t}))^\top  \tilde{q}_{i,t+1},y_{i,t}-x_{i,t}\rangle\nonumber\\
&\le[ \tilde{q}_{i,t+1}]^\top g_{i,t}(y_{{i,t}})-[ \tilde{q}_{i,t+1}]^\top g_{i,t}(x_{i,t})\nonumber\\
&=[\bar{q}_{t}]^\top [g_{i,t}(y_{i,t})- g_{i,t}(x_{i,t})]\nonumber\\
&~~~+[ \tilde{q}_{i,t+1}-\bar{q}_{t}]^\top [g_{i,t}(y_{i,t})- g_{i,t}(x_{i,t})].\label{online_op:gyx}
\end{align}
Similar to (\ref{online_op:qmu5}), we have
\begin{align}\label{online_op:qqbar}
[\tilde{q}_{i,t+1}-\bar{q}_{t}]^\top [g_{i,t}(y_{i,t})- g_{i,t}(x_{i,t})]
\le
\frac{2E_{1}(t+1)}{n}.
\end{align}
Applying (\ref{online_op:lemma_mirroreq2}) to the update rule (\ref{online_op:al_x}), we get
\begin{align}
&\langle a_{i,t+1}+\nabla r_{i,t}(\tilde{x}_{i,t+1}),\tilde{x}_{i,t+1}-y_{i,t}\rangle\nonumber\\
&\le\frac{1}{\alpha_{t+1}}[\calD_{\psi_i}(y_{i,t},x_{i,t})
-\calD_{\psi_i}(y_{i,t},\tilde{x}_{i,t+1})\nonumber\\
&~~~-\calD_{\psi_i}(\tilde{x}_{i,t+1},x_{i,t})]\nonumber\\
&=\frac{1}{\alpha_{t+1}}[\calD_{\psi_i}(y_{i,t},x_{i,t})
-\calD_{\psi_i}(y_{i,t+1},x_{i,t+1})\nonumber\\
&~~~+\calD_{\psi_i}(y_{i,t+1},x_{i,t+1})-\calD_{\psi_i}(\Phi_{i,t+1}(y_{i,t}),x_{i,t+1})\nonumber\\
&~~~+\calD_{\psi_i}(\Phi_{i,t+1}(y_{i,t}),x_{i,t+1})-\calD_{\psi_i}(y_{i,t},\tilde{x}_{i,t+1})\nonumber\\
&~~~-\calD_{\psi_i}(\tilde{x}_{i,t+1},x_{i,t})]\nonumber\\
&\le\frac{1}{\alpha_{t+1}}[\calD_{\psi_i}(y_{i,t},x_{i,t})
-\calD_{\psi_i}(y_{i,t+1},x_{i,t+1})\nonumber\\
&~~~+K\|y_{i,t+1}-\Phi_{i,t+1}(y_{i,t})\|
-\frac{\underline{\sigma}}{2}\|\tilde{x}_{i,t+1}-x_{i,t}\|^2],\label{online_op:omgea2}
\end{align}
where the last inequality holds since (\ref{online_op:al_xat}), (\ref{online_op:assnonexpansiveequ}), (\ref{online_op:bregmalip}), and (\ref{online_op:eqbergman}).

Combining (\ref{online_op:fxy})--(\ref{online_op:omgea2}) and summing over $i\in[n]$ yield (\ref{online_op:lemma_regretdeltaequ}).

\subsection{Proof of Lemma~\ref{online_op:theoremreg}}\label{online_op:theoremregproof}
(i) The definition of $\Delta_t$ gives
\begin{align}
-\frac{\Delta_t}{2\gamma_t}
=&\frac{1}{2\gamma_t}\sum_{i=1}^n[(1-\beta_t\gamma_t)\|q_{i,t-1}-q\|^2-\|q_{i,t}-q\|^2]\nonumber\\
=&\frac{1}{2}\sum_{i=1}^n\Big[\frac{1}{\gamma_{t-1}}\|q_{i,t-1}-q\|^2
-\frac{1}{\gamma_{t}}\|q_{i,t}-q\|^2\Big]\nonumber\\
&+\frac{1}{2}\sum_{i=1}^n\Big(\frac{1}{\gamma_{t}}
-\frac{1}{\gamma_{t-1}}-\beta_t\Big)\|q_{i,t-1}-q\|^2.\label{online_op:qmu7}
\end{align}

For any nonnegative sequence $\zeta_1,\zeta_2,\dots$, it holds that
\begin{align}\label{online_op:zeta}
\sum_{t=1}^T\sum_{s=1}^{t}\zeta_{s+1}\lambda^{t-s}
=\sum_{t=1}^{T}\zeta_{t+1}\sum_{s=0}^{T-t}\lambda^{s}
\le\frac{1}{(1-\lambda)}\sum_{t=1}^T\zeta_{t+1}.
\end{align}

Let $g_c:\mathbb{R}^m_{+}\rightarrow\mathbb{R}$ be a function defined as
\begin{align}\label{online_op:gc}
g_c(q)=&\Big[\sum_{t=1}^Tg_{t}(x_{t})\Big]^\top q\nonumber\\
&-n\Big[\frac{1}{\gamma_1}+
\sum_{t=1}^T\Big(\frac{G^2\alpha_{t+1}}{\underline{\sigma}}
+\frac{\beta_{t+1}}{2}\Big)\Big]\|q\|^2.
\end{align}
Combining (\ref{online_op:gvirtualnorm}) and (\ref{online_op:lemma_regretdeltaequ}), summing over $t\in[T]$, neglecting the nonnegative term $\|q_{i,T+1}-q\|^2$, and using (\ref{online_op:qmu7})--(\ref{online_op:gc}),  $\|q_{i,1}-q\|^2\le2\|q_{i,1}\|^2+2\|q\|^2=2\|q\|^2$,  and $g_{t}(y_t)\le{\bf 0}_{m},~\bsy_T\in\calX_{T}$, yield
\begin{align}\label{online_op:theoremconsequ2}
&g_c(q)+\Reg(\bsx_T,\bsy_T)\nonumber\\
&\le C_{1,1}\sum_{t=1}^T\gamma_{t+1}
+\frac{4nG^2}{\underline{\sigma}}\sum_{t=1}^T\alpha_{t+1}+\sum_{t=1}^TE_{2}(t+1)\nonumber\\
&~~~-\frac{1}{2}\sum_{t=1}^T\sum_{i=1}^n\Big(\frac{1}{\gamma_{t}}
-\frac{1}{\gamma_{t+1}}+\beta_{t+1}\Big)\|q_{i,t}-q\|^2\nonumber\\
&~~~+K\sum_{t=1}^T\sum_{i=1}^n
\frac{\|y_{i,t+1}-\Phi_{i,t+1}(y_{i,t})\|}{\alpha_{t+1}},~\forall q\in\mathbb{R}^m_{+}.
\end{align}
Then, substituting $q={\bf 0}_{m}$ into (\ref{online_op:theoremconsequ2}), setting $y_{i,T+1}=\Phi_{i,T+1}(y_{i,T})$, noting that $\{\alpha_t\}$ is non-increasing, and rearranging terms yield (\ref{online_op:theoremregequ}).

(ii) Substituting $q=q_0$ into $g_c(q)$ gives
\begin{align}\label{online_op:gcequ}
g_c(q_0)=&\frac{\|[\sum_{t=1}^Tg_{t}(x_{t})]_+\|^2}
{4n[\frac{1}{\gamma_1}
+\sum_{t=1}^T(\frac{G^2\alpha_{t+1}}{\underline{\sigma}}+\frac{\beta_{t+1}}{2})]}.
\end{align}
Moreover, (\ref{online_op:ftgtupper}) gives
\begin{align}
|\Reg(\bsx_T,\bsy_T)|\le&2nFT,~\forall \bsy_T\in\calX_T.\label{online_op:ff}
\end{align}
Substituting $q=q_0$ into (\ref{online_op:theoremconsequ2}), combining (\ref{online_op:gcequ})--(\ref{online_op:ff}), and rearranging terms give (\ref{online_op:theoremconsequ}).

\subsection{Proof of Theorem~\ref{online_op:corollaryreg}}\label{online_op:corollaryregproof}
(i) For any constant $\kappa<1$ and $T\in\mathbb{N}_+$, it holds that
\begin{align}\label{online_op:sequenceupp}
\sum_{t=1}^T\frac{1}{t^\kappa}\le\int_1^T\frac{1}{t^\kappa}dt+1
=\frac{T^{1-\kappa}-\kappa}{1-\kappa}\le\frac{T^{1-\kappa}}{1-\kappa}.
\end{align}
Applying (\ref{online_op:sequenceupp}) to the first three terms in the right-hand side of (\ref{online_op:theoremregequ}) gives
\begin{align}
C_{1,1}\sum_{t=1}^T\gamma_{t+1}\le&\frac{C_{1,1}}{\kappa}T^{\kappa},\label{online_op:corollaryregequ11}\\
C_{1,2}\sum_{t=1}^T\alpha_{t+1}\le&\frac{C_{1,2}}{1-c}T^{1-c}.\label{online_op:corollaryregequ12}
\end{align}
Noting that $\{\alpha_t\}$ is non-increasing and (\ref{online_op:bregmanupp}), for any $s\in[T]$, we have
\begin{align}
&\sum_{t=s}^TE_{2}(t+1)\nonumber\\
&=\sum_{t=s}^T\sum_{i=1}^n\Big[\frac{1}{\alpha_{t}}\calD_{\psi_i}(y_{i,t},x_{i,t})
-\frac{1}{\alpha_{t+1}}\calD_{\psi_i}(y_{i,t+1},x_{i,t+1})\Big]\nonumber\\
&~~~+\sum_{t=s}^T\sum_{i=1}^n\Big(\frac{1}{\alpha_{t+1}}-\frac{1}{\alpha_{t}}\Big)
\calD_{\psi_i}(y_{i,t},x_{i,t})\nonumber\\
&\le\frac{1}{\alpha_{s}}\sum_{i=1}^n\calD_{\psi_i}(y_{i,s},x_{i,s})
-\frac{1}{\alpha_{T+1}}\sum_{i=1}^n\calD_{\psi_i}(y_{i,T+1},x_{i,T+1})\nonumber\\
&~~~+n\Big(\frac{1}{\alpha_{T+1}}
-\frac{1}{\alpha_{s}}\Big)d(X)K\le\frac{nd(X)K}{\alpha_{T+1}}.\label{online_op:dyz}
\end{align}
Combining (\ref{online_op:theoremregequ}) and (\ref{online_op:corollaryregequ11})--(\ref{online_op:dyz}), setting $y_{i,t}=x^*_{i,t},~\forall t\in[T]$, and noting that the second last term in the right-hand side of (\ref{online_op:theoremregequ}) is non-positive since $\frac{1}{\gamma_{t}}-\frac{1}{\gamma_{t+1}}
+\beta_{t+1}>0$ yield (\ref{online_op:corollaryregequ1}).

(ii) Using (\ref{online_op:sequenceupp}) gives
\begin{align}
&4n\Big[\frac{1}{\gamma_1}
+\sum_{t=1}^T\Big(\frac{G^2\alpha_{t+1}}{\underline{\sigma}}+\frac{\beta_{t+1}}{2}\Big)\Big]
\le C_{2,1}T^{\max\{1-c,1-\kappa\}}.\label{online_op:corollaryconsequ1}
\end{align}

Combining (\ref{online_op:theoremconsequ}) and (\ref{online_op:corollaryregequ11})--(\ref{online_op:corollaryconsequ1})
and noting that the last term in the right-hand side of (\ref{online_op:theoremconsequ}) is non-positive since $\frac{1}{\gamma_{t}}-\frac{1}{\gamma_{t+1}}
+\beta_{t+1}>0$ give (\ref{online_op:corollaryconsequ}).

\subsection{Proof of Theorem~\ref{online_op:theoremslater}}\label{online_op:theoremslaterproof}

(i) Substituting $c=1-\kappa$ in (\ref{online_op:corollaryregequ1}) gives (\ref{online_op:regslater}).

(ii) We first show that $\|q_t\|\le B_2$ by induction, where $q_t=\col(q_{1,t},\dots,q_{n,t})$.

It is straightforward to see that $\|q_1\|=0\le B_2$. Suppose that there exists $T_1\in\mathbb{N}_+$ such that $\|q_t\|\le B_2,~\forall t\in[T_1]$. We show that $\|q_{T_1+1}\|\le B_2$ by contradiction.
Now suppose that $\|q_{T_1+1}\|> B_2$.
Noting that $\|\bar{q}_{T_1+1}\|_1=\|q_{T_1+1}\|_1\ge\|q_{T_1+1}\|>B_2$ and $\|\bar{q}_1\|_1=0$, we know that there exists $t_0\in[T_1]$ such that $\|\bar{q}_{t_0}\|_1\le\frac{B_2}{2}$. Let $t_1=\max\{t_0:~\|\bar{q}_{t_0}\|_1\le\frac{B_2}{2},~t_0\in[T_1]\}$.
Combining (\ref{online_op:gvirtualnorm}) and (\ref{online_op:lemma_regretdeltaequ}), substituting $q={\bf 0}_{m}$ and $y_t=x_0$, setting $\{\Phi_{t,i}\}$ as the identity mapping, and using $|f_{t}(x_{t})-f_{t}(x_{0})|\le2F$, and (\ref{online_op:gtcon}) yield
\begin{align}\label{online_op:theoremslaterqbar3}
&\|q_{t+1}\|^2-(1-\beta_{t+1}\gamma_{t+1})\|q_t\|^2\nonumber\\
&\le 2B_3\gamma_{t+1}+2\gamma_{t+1}E_2(t+1)-2\varepsilon\|\bar{q}_t\|_1\gamma_{t+1}.
\end{align}
Summing (\ref{online_op:theoremslaterqbar3}) over $t\in\{t_1,\dots,T_1\}$, using (\ref{online_op:bregmanupp}), $\alpha_t=\gamma_t=\frac{1}{t^{1-\kappa}}$ and $\beta_t\ge0$, and noting that $\|q_{T_1+1}\|> B_2$, $\|q_{t_1}\|\le\|\bar{q}_{t_1}\|_1\le \frac{B_2}{2}$, and $\|\bar{q}_t\|_1>\frac{B_2}{2},~\forall t\in\{t_1+1,\dots,T_1\}$ give
\begin{align}
&\frac{3(B_2)^2}{4}<\|q_{T_1+1}\|^2
-\|q_{t_1}\|^2+\sum_{t=t_1}^{T_1}\beta_{t+1}\gamma_{t+1}\|q_{t}\|^2\nonumber\\
&\le 2B_3\sum_{t=t_1}^{T_1}\gamma_{t+1}+2nd(X)K
-2\varepsilon\sum_{t=t_1}^{T_1}\|\bar{q}_t\|_1\gamma_{t+1}\nonumber\\
&\le \frac{2B_3}{\kappa}[(T_1+1)^{\kappa}-(t_1+1)^{\kappa}]+2B_3+2nd(X)K\nonumber\\
&~~~-\frac{\varepsilon B_2}{\kappa}[(T_1+1)^{\kappa}-(t_1+1)^{\kappa}]+\varepsilon B_2-2\varepsilon\|\bar{q}_{t_1}\|_1\nonumber\\
&\le2nd(X)K+2\varepsilon B_2\le\frac{(B_2)^2}{2},
\end{align}
which is a contradiction. Thus, $\|q_{T_1+1}\|\le B_2$.

We now show (\ref{online_op:regcslater}) holds.
Applying (\ref{online_op:lemma_mirrorine}) to the update rule (\ref{online_op:al_x}) and noting $\|\tilde{q}_{i,t+1}\|\le\|q_t\|\le B_2$ give
\begin{align}
\|\tilde{x}_{i,t+1}-x_{i,t}\|\le&\frac{\|\alpha_{t+1}a_{i,t+1}\|+\alpha_{t+1}G}{\underline{\sigma}}\nonumber\\
\le& \frac{G\alpha_{t+1}}{\underline{\sigma} }\Big(B_2+2\Big).\label{online_op:xxplusoneslater}
\end{align}
(\ref{online_op:al_qhat}) and (\ref{online_op:al_q}) give
\begin{align}
q_{i,t+1}
\ge(1-\beta_{t+1}\gamma_{t+1})\sum_{j=1}^n[W_{t}]_{ij}q_{j,t}+\gamma_{t+1}b_{i,t+1}.\label{online_op:theoremslaterq}
\end{align}
Summing (\ref{online_op:theoremslaterq}) over $i\in[n]$, dividing by $n\gamma_{t+1}$, and using $\sum_{i=1}^n[W_{t}]_{ij}=1,~\forall t\in\mathbb{N}_+$, (\ref{online_op:subgupper}), (\ref{online_op:al_b}), and (\ref{online_op:xxplusoneslater}) yield
\begin{align}
\frac{\bar{q}_{t+1}}{\gamma_{t+1}}\ge&(\frac{1}{\gamma_{t+1}}-\beta_{t+1})\bar{q}_{t}
+\frac{1}{n}\sum_{i=1}^nb_{i,t+1}\nonumber\\
\ge&(\frac{1}{\gamma_{t+1}}-\beta_{t+1})\bar{q}_{t}+\frac{1}{n}g_{t}(x_{t})\nonumber\\
&-\frac{G^2\alpha_{t+1}}{\underline{\sigma} }\Big(B_2+2\Big){\bf 1}_m.\label{online_op:theoremslaterqbar}
\end{align}
Summing (\ref{online_op:theoremslaterqbar}) over $t\in[T]$ gives
\begin{align}
\frac{1}{n}\sum_{t=1}^Tg_{t}(x_{t})\le&\frac{\bar{q}_{T+1}}{\gamma_{T+1}}
+\sum_{t=1}^T\beta_{t+1}\bar{q}_{t}\nonumber\\
&+\sum_{t=1}^T\frac{G^2\alpha_{t+1}}{\underline{\sigma} }\Big(B_2+2\Big){\bf 1}_m.
\end{align}
Noting that $\|[x]_+\|\le\|y\|$ for all $x\le y$ and using $\|\bar{q}_t\|\le\|q_t\|\le B_2$ and (\ref{online_op:sequenceupp}) yield (\ref{online_op:regcslater}).

\subsection{Proof of Theorem~\ref{online_op:theoremstongconvex}}\label{online_op:theoremstongconvexproof}

(i) We first show that $\Reg(\bsx_T,\check{\bsx}_T^*)\le C_4T^{\kappa}$ when $\alpha_t=\frac{1}{t^{1-\kappa}}$.

Under Assumption~\ref{online_op:assstrongconvex}, (\ref{online_op:fxy}) can be replaced by
\begin{align}\label{online_op:fxystrongconvex}
&f_{i,t}(x_{i,t})+r_{i,t}(x_{i,t})-f_{i,t}(y_{i,t})-r_{i,t}(y_{i,t})\nonumber\\
&\le\langle\nabla f_{i,t}(x_{i,t}),x_{i,t}-y_{i,t}\rangle
+\langle\nabla r_{i,t}(x_{i,t}),x_{i,t}-\tilde{x}_{i,t+1}\rangle\nonumber\\
&~~~+\langle\nabla r_{i,t}(\tilde{x}_{i,t+1}),\tilde{x}_{i,t+1}-y_{i,t}\rangle
-\underline{\mu}\calD_{\psi_i}(y_{i,t},x_{i,t})\nonumber\\
&=\langle\nabla f_{i,t}(x_{i,t})+\nabla r_{i,t}(x_{i,t}),x_{i,t}-\tilde{x}_{i,t+1}\rangle\nonumber\\
&~~~+\langle\nabla f_{i,t}(x_{i,t})+\nabla r_{i,t}(\tilde{x}_{i,t+1}),\tilde{x}_{i,t+1}-y_{i,t}\rangle\nonumber\\
&~~~-\underline{\mu}\calD_{\psi_i}(y_{i,t},x_{i,t}).
\end{align}
Thus, (\ref{online_op:lemma_regretdeltaequ})--(\ref{online_op:theoremconsequ}) still hold if replacing $E_{2}(t+1)$ by
\begin{align*}
E_{3}(t+1)=&\sum_{i=1}^n\Big\{\frac{1}{\alpha_{t+1}}\big[\calD_{\psi_i}(y_{i,t},x_{i,t})\\
&-\calD_{\psi_i}(y_{i,t+1},x_{i,t+1})\big]-\underline{\mu}\calD_{\psi_i}(y_{i,t},x_{i,t})\Big\}.
\end{align*}
Then,
\begin{align}\label{online_op:e3}
&\sum_{t=1}^TE_{3}(t+1)\nonumber\\
&=\sum_{t=1}^T\sum_{i=1}^n\Big[\frac{1}{\alpha_{t}}\calD_{\psi_i}(y_{i,t},x_{i,t})
-\frac{1}{\alpha_{t+1}}\calD_{\psi_i}(y_{i,t+1},x_{i,t+1})\Big]\nonumber\\
&~~~+\sum_{t=1}^T\sum_{i=1}^n\Big(\frac{1}{\alpha_{t+1}}
-\frac{1}{\alpha_{t}}-\underline{\mu}\Big)\calD_{\psi_i}(y_{i,t},x_{i,t}).
\end{align}
Noting that $\underline{\mu}>0$, $\calD_{\psi_i}(\cdot,\cdot)\ge0$, and $\frac{1}{\alpha_{t+1}}
-\frac{1}{\alpha_{t}}-\underline{\mu}=\frac{t+1}{(t+1)^\kappa}-\frac{t}{t^\kappa}-\underline{\mu}
<\frac{1}{t^\kappa}-\underline{\mu}\le0,~\forall t\ge B_4$ and using (\ref{online_op:dyz}) and (\ref{online_op:e3}) yield
\begin{align}\label{online_op:e4}
&\sum_{t=1}^TE_{3}(t+1)=\sum_{t=1}^{B_4-1}E_{2}(t+1)+\sum_{t=B_4}^TE_{3}(t+1)\nonumber\\
&\le\frac{nd(X)K}{\alpha_{B_4}}\nonumber\\
&~~~+\sum_{t=B_4}^T\sum_{i=1}^n\Big[\frac{1}{\alpha_{t}}\calD_{\psi_i}(y_{i,t},x_{i,t})
-\frac{1}{\alpha_{t+1}}\calD_{\psi_i}(y_{i,t+1},x_{i,t+1})\Big]\nonumber\\
&~~~+\sum_{t=B_4}^T\sum_{i=1}^n\Big(\frac{1}{\alpha_{t+1}}
-\frac{1}{\alpha_{t}}-\underline{\mu}\Big)\calD_{\psi_i}(y_{i,t},x_{i,t})\nonumber\\
&\le\frac{2nd(X)K}{\alpha_{B_4}}.
\end{align}
Replacing (\ref{online_op:dyz}) with (\ref{online_op:e4}) and along the same line as the proof of (\ref{online_op:corollaryregequ1}) in Theorem~\ref{online_op:corollaryreg} give that $\Reg(\bsx_T,\check{\bsx}_T^*)\le C_4T^{\kappa}$ when $\alpha_t=\frac{1}{t^{1-\kappa}}$.

Next, we show that (\ref{online_op:regstongconvex}) holds.  When $\kappa\in(0,0.5)$, we have $\alpha_t=1/t^{(1-\kappa)}$. Thus, from the above result, we have $\Reg(\bsx_T,\check{\bsx}_T^*)\le C_4T^{\kappa}$. When $\kappa\in[0.5,1)$, we have $\alpha_t=1/t^{\kappa}$. Thus, (\ref{online_op:staticregequ1}) gives $\Reg(\bsx_T,\check{\bsx}_T^*)\le C_1T^{\kappa}$. In conclusion, (\ref{online_op:regstongconvex}) holds.

(ii)  Substituting $c=1-\kappa$ when $\kappa\in(0,0.5)$ and $c=\kappa$ when $\kappa\in[0.5,1)$ in (\ref{online_op:corollaryconsequ}) gives (\ref{online_op:regcstongconvex}).


\end{document}